\documentclass[11pt]{article}

\usepackage{subfigure}
\usepackage{color}
\usepackage[cmex10]{amsmath}
\usepackage{mathrsfs,amssymb,amsmath}
\usepackage{cases}
\usepackage{graphicx}
\usepackage{caption}

\usepackage{epstopdf}
\usepackage{hyperref}

\usepackage{float}
\usepackage{placeins}

\newcommand{\R}{{\mathbb R}}

\def\dref#1{(\ref{#1})}
\def\crr{\cr\noalign{\vskip-0.0mm}}

\usepackage{algorithm} 
\usepackage{algorithmic} 

\newtheorem{corollary}{Corollary}[section]
\newtheorem{remark}{Remark}[section]
\newtheorem{lemma}{Lemma}[section]

\newtheorem{proposition}{Proposition}

\newtheorem{theorem}{Theorem}[section]
\newtheorem{definition}{Definition}[section]
\newtheorem{notation}{Notation}[section]

\newenvironment{proof}{{\bf {Proof.}}}{\hfill $\square$}
\allowdisplaybreaks[4]

\numberwithin{equation}{section}

\def\dref#1{(\ref{#1})}

\def\pt{\partial}

\def\ra{\rightarrow}
\def\bs{\boldsymbol}

\def\s{\subseteq}

\def\e{\varepsilon}

\def\ol{\overline}

\def\vp{\varphi}
\def\lg{\langle}
\def\llg{\left\langle}

\def\rg{\rangle}
\def\rrg{\right\rangle}

\def\bf{\textbf}
\def\pt{\partial}

\def\Om{\Omega}
\def\la{\lambda}
\def\al{\alpha}

\def\de{\delta}

\def\De{\Delta}

\def\Ga{\Gamma}

\def\ts{\times}

\def\iy{\infty}

\def\f{\frac}

\def\se{\setminus}

\def\Lra{\Leftrightarrow}

\def\Span{{\rm{span}}}

\def\df{\mathrm d}

\def\wt{\widetilde}
\def\wh{\widehat}

\def\esssup{\operatorname*{ess\ \! sup}}

\def\hra{\hookrightarrow}

\def\mcO{\mathcal{O}}
\def\mcA{\mathcal{A}}

\def\mcM{\mathcal{M}}

	\DeclareMathOperator{\Div}{div}

	\DeclareMathOperator{\dist}{dist}

	\DeclareMathOperator{\supp}{{supp}}

	\newcommand{\N}{\mathbb N}

	\newcommand{\pxi}{\f{\pt}{\pt x^i}}
	\newcommand{\pxj}{\f{\pt}{\pt x^j}}
	\newcommand{\pxk}{\f{\pt}{\pt x^k}}

\begin{document}

\title{{\bf   {\bf  Approximation of Degenerate Hyperbolic Equations with Interior Degeneracy and Applications to Controllability}}\footnote{\small This work was carried out with the support of the
National Natural Science Foundation of China under grant  nos. 12131008 and U23B2033, and
National Key R\&D Program of China under grant no. 2024YFA1013101.}}

\author{ Dong-Hui Yang$^{a}$, Bao-Zhu Guo$^{b}$\footnote{\small
The corresponding author. Email: bzguo@is.ac.cn}
\\
$^a${\it School of Mathematics and Statistics, Central South University}\\
			{\it Changsha 410075, P.R.China}\\
$^b${\it Academy of Mathematics and Systems Science, Academia Sinica, Beijing 100190, China}}

\date{}

\maketitle{}

\begin{abstract}
      In this paper, we establish the existence of solutions for a particular class of degenerate hyperbolic equations. Following this, we approximate these degenerate equations by employing a sequence of uniformly hyperbolic equations. Notably, this specific approximation result has remained unexplored in the existing body of literature. Ultimately, we utilize this approximation framework to derive controllability results for the original degenerate hyperbolic equations, marking what could potentially be the inaugural investigation into higher-dimensional degenerate hyperbolic equations.

\vspace{0.3cm}

\noindent {\bf {Keywords:}}  Degenerate partial differential equations, shape design.
	
\vspace{0.3cm}
		
		\noindent {\bf {AMS subject classifications (2010):}}~ 35J70, 35K65, 49Q10, 93B05.

	\end{abstract}

\section{Introduction}\label{S1}
	
Hyperbolic equations have attracted substantial research focus in the existing body of literature; for detailed references, see \cite{Evans,Fattorini,Fursikov,Lasiecka,Lasiecka1,Lasiecka2,Rousseau,Russell,Zuazua}. Degenerate hyperbolic equations have similarly sparked significant scholarly interest; for further reading, refer to \cite{Bai,Bai1,Cannarsa1,Gueye,Han,Martinez,Yang,Zhang}.

The approximation of degenerate elliptic or parabolic equations by sequences of uniformly elliptic or parabolic equations has been thoroughly investigated in numerous studies; for relevant citations, see \cite{Cavalheiro,FF,Wu1,Yang1,Yang3,Yang4}. However, to the best of the authors' knowledge, this specific strategy has not yet been explored for degenerate hyperbolic equations.

Approximating degenerate hyperbolic equations with uniformly hyperbolic counterparts is not only inherently fascinating but also holds immense importance for a variety of mathematical applications.

Controllability represents a pivotal field in both pure and applied mathematics, and it has been exhaustively studied for uniformly partial differential equations; for illustrative examples, refer to \cite{Coron,Fattorini,Fattorini1,Fursikov,Lasiecka,Lasiecka1,Lasiecka2,Lasiecka3,Lions,Rousseau,Russell,Yao,Zuazua}. We recall that, for uniformly hyperbolic equations, controllability is equivalent to observability through the Hilbert Uniqueness Method (HUM) introduced by J.-L. Lions \cite{Lions}. Therefore, in order to study the controllability of a uniformly hyperbolic equation, it suffices to establish the corresponding observability inequality.

There is also a plethora of research dedicated to the controllability of degenerate hyperbolic equations; for specific studies, see \cite{Bai,Bai1,Cannarsa1,Fragnelli,Gueye,Martinez,Yang4,Yang}. Nevertheless, the majority of these findings are confined to one-dimensional degenerate hyperbolic equations.

The papers \cite{Cannarsa1,Fragnelli,Gueye} addressed this issue within the one-dimensional context. However, their approaches do not seem readily adaptable to higher-dimensional problems. These results were later extended in \cite{Bai,Bai1}, yet the higher-dimensional scenario remains largely unexplored.

For higher-dimensional degenerate hyperbolic equations, the works \cite{Yang4,Yang} proposed a relatively general approach, termed the cut-off method, which is applicable to a wide array of degenerate hyperbolic problems. Essentially, this method involves isolating the degenerate region and applying controls to this segment, thereby achieving controllability results for the original degenerate hyperbolic equation. 

Although the cut-off method has certain practical applications, from our viewpoint, it is not entirely satisfactory, as it requires the control to act directly on the degenerate region itself. From both physical and mathematical perspectives, it is more intuitive to consider controls that operate either on a segment of the degenerate region or on a non-degenerate portion of the domain.

	 In this study, we investigate the null controllability of the following degenerate hyperbolic equation:
	\begin{equation}\label{12.14.1}
		\begin{cases}
			\partial_{tt}\varphi-\Div(|x|^\alpha \nabla \varphi)=f, &\text{in }Q, \\
			\varphi=0, &\text{on }  \pt Q, \\
			\varphi(0)=\varphi^0, \pt_t\vp(0)=\vp^1, &\text{in }\Omega,
		\end{cases}
	\end{equation}
	where $\alpha\in (0,2)$ is a given constant, $\Omega\subset\mathbb{R}^N$ is a bounded domain  with $0\in\Om$ and  $\partial\Omega\in C^2$,   $Q=\Omega\times (0,T)$ with $T>0$ being a constant,
	and $\pt Q=\pt\Om \ts (0,T)$. The initial data are
	$\varphi^0\in H_0^1(\Omega;w)$ and $ \vp^1\in L^2(\Om)$,  and $f\in L^2(Q)$. Here and in what follows, the weighted Sobolev spaces $H_0^1(\Om;w), H^1(\Om;w)$ and $H^2(\Om;w)$ will be defined in the next section.

	Denote
	\begin{equation}\label{11.21.8}
		w=|x|^\alpha,\quad w(E)=\int_E w\df x,
	\end{equation}
	and
	\begin{equation}\label{gbz1}
		\mcA \vp=-\Div(w\nabla \vp),\; D(\mcA)=H^2(\Om;w)\cap H_0^1(\Om;w).
	\end{equation}
	 It is evident that   $w$ is an $A_{1+\f{2}{N}}$ weight (see \cite{GC,Heinonen}).  Moreover, we denote  $R_0>0$ such that $B(0,8R_0)\equiv\{x\in\R^N\colon |x|<8R_0\}\s \Om$.
	
In this paper, we first establish the existence of solutions for the degenerate hyperbolic equation \dref{11.21.8}. Subsequently, we approximate this degenerate equation by a sequence of uniformly hyperbolic equations. To the best of our knowledge, this type of approximation
 and the controllability of higher-dimensional degenerate hyperbolic equations have not been previously addressed in the existing literature. Finally, by leveraging the controllability properties of the approximating uniformly hyperbolic equations, we derive controllability results for the system \dref{11.21.8}.
 
{{
		Although approximation methods for degenerate equations have been considered in \cite{Cavalheiro,FF}, they are not sufficient for our purposes, since the approximating functions used there are merely measurable and lack the regularity required in our analysis. In contrast, the approximation introduced in \cite{Yang4} and further developed in the present paper is suitable, as it possesses second-order partial derivatives.
		
		We emphasize that choosing an appropriate approximation of the weight function $|x|^\alpha$ is crucial. In particular, the approximating weights $w_\varepsilon$ must satisfy additional structural and regularity properties, including precise control of their derivatives. For example, in the proof of Lemma \ref{08.16.L2}, we require
		 $\psi_\varepsilon - x\cdot\nabla \psi_\varepsilon\geq  \frac{3}{8\varepsilon^3}(\varepsilon^2 - |x|^2)^2 \ge 0$.

		It is well known that observability inequalities can be derived via the multiplier method. In practice, there are two main approaches: one based on classical integrations by parts (see, e.g., \cite[Chapter 2.3]{Zuazua}), and the other based on geometric arguments (see \cite{Lasiecka3,Yao}). For degenerate hyperbolic equations, the geometric approach is often more suitable. In particular, once Lemmas \ref{11.26.L1} and \ref{11.26.L2} in the following are established, the derivation of the observability inequality becomes relatively routine. However, in the degenerate setting, these two lemmas are highly nontrivial and constitute the main analytical difficulty. 
 We also note that, for certain special domains $\Omega$, the classical integration-by-parts method remains effective.

In order to derive the observability inequality for equation \eqref{12.14.1}, we begin by selecting an appropriate multiplier $H(x)$, which we take to be $H(x)=x$ (see Lemma \ref{11.26.L1}). For the uniformly hyperbolic equations, a key term arising in the multiplier method is
 \begin{equation*}
 	\iint_Q \operatorname{div}(|x|^\alpha \nabla \varphi)\,(H\cdot \nabla \varphi)\df x\df t, \mbox{ or } \iint_Q \Div(|x|^\al \nabla\vp)\left[H(\vp)\right]\df x\df t.
 \end{equation*}
 However, this term cannot be handled directly by integration by parts, since it is not clear whether $H\cdot\nabla\varphi \in H^1(\Omega;w)$ (or, $H^1(\Omega)$). To justify such an argument, one would need higher-order regularity for $\varphi$. Although we know that $\varphi \in H^2(\Omega;w)\cap H_0^1(\Omega;w)$, this does not guarantee that the mixed derivatives $\partial_{x_i x_j}\varphi$ belong to the required weighted spaces for $i,j=1,\dots,N$. Establishing this level of regularity is technically difficult.
 
 In fact, we have shown in \cite[Lemma 2.6]{Yang3} that
 \begin{equation*}
 	|x|^{3+\f{3\al}{2}}\,\frac{\partial^2 \varphi}{\partial x_i \partial x_j} \in L^2(Q),
 \end{equation*} 
 but this is still insufficient to justify a direct integration by parts in the above expression. Therefore, a direct approach appears to be {\it infeasible}, and an approximation argument is more suitable for our method.}}

We now present the main results of this paper. The first result establishes an approximation of the degenerate hyperbolic equation by a sequence of uniformly hyperbolic equations.
\begin{theorem}\label{11.20.T2}
		Let $\vp^0=\vp_\e^0\in H_0^1(\Om), \vp^1=\vp_\e^1\in L^2(\Om)$  for all $\e\in (0,R_0)$. Let $\vp$ be the solution of \eqref{12.14.1} with respect to $(\vp^0,\vp^1,f)$,  and $\vp_\e\ (0<\e<R_0)$ be the solution of \eqref{11.14.2} with respect to $(\vp^0,\vp^1,f)$. Then exists a subsequence of $\{\vp_\e\}_{\e>0}$, still denoted by itself,  such that
		\begin{equation}\label{11.21.2}
			\begin{split}
				\vp_\e
				&\ra \vp \mbox{ weakly in } L^2(0,T; H_0^1(\Om;w)), \\
				\pt_t\vp_\e
				&\ra \pt_t\vp \mbox{ weakly in } L^2(Q).
			\end{split}
		\end{equation}
		Moreover, if we have an additional assumption $\vp^0\in C_0^\iy(\Om), \vp^1\in H_0^1(\Om)$ and $f\in H^1(0,T; L^2(\Om))$, then
		\begin{equation}\label{11.21.7}
			\f{\pt \vp_\e}{\pt \nu}\ra \f{\pt \vp}{\pt \nu} \mbox{ strongly in } L^2(Q).
		\end{equation}
	\end{theorem}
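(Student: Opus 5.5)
The plan is to combine uniform energy estimates for \eqref{11.14.2} with weak compactness and a limit passage in the variational formulation. For the normal derivatives, I would add a localized higher-regularity argument near $\partial\Omega$, where the approximating weights coincide with $w$ and are nondegenerate.

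I use the following properties of the approximating weights $w_\e$ from the construction behind \eqref{11.14.2}:
\begin{itemize}
\item $w_\e$ is smooth, with $w_\e\ge c_\e>0$;
\item $w_\e= w$ outside $B(0,\e)\subset B(0,R_0)$;
\item $w_\e\le Cw$ with $C$ independent of $\e$;
\item $w_\e\to w$ pointwise.
\end{itemize}

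\textbf{Step 1 (uniform energy bound).} Multiply \eqref{11.14.2} by $\pt_t\vp_\e$ and use Gronwall to get
\[
\sup_{t\in[0,T]}\Big(\|\pt_t\vp_\e(t)\|_{L^2(\Om)}^2+\int_\Om w_\e|\nabla\vp_\e(t)|^2\df x\Big)\le C\big(\|\vp^1\|^2_{L^2}+\textstyle\int_\Om w_\e|\nabla\vp^0|^2+\|f\|^2_{L^2(Q)}\big).
\]
Since $\vp^0\in H_0^1(\Om)$ and $w_\e\le C$, the right-hand side is bounded independently of $\e$. For the left-hand side, I need $\int_\Om w|\nabla\vp_\e|^2\le C\int_\Om w_\e|\nabla \vp_\e|^2$. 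Outside $B(0,\e)$ this is an identity. Inside $B(0,\e)$ it needs $w\le Cw_\e$ there, which I expect the construction of $w_\e$ to give, since $w_\e$ is comparable to $\e^\alpha\ge |x|^\alpha$ on $B(0,\e)$.

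\textbf{Step 2 (weak limit and identification).} Step 1 gives a subsequence with $\vp_\e\rightharpoonup \tilde\vp$ weakly-$*$ in $L^\infty(0,T;H_0^1(\Om;w))$ and $\pt_t\vp_\e\rightharpoonup\pt_t\tilde\vp$ weakly-$*$ in $L^\infty(0,T;L^2(\Om))$. By Aubin--Lions with the compact embedding $H_0^1(\Om;w)\hookrightarrow L^2(\Om)$ (valid for $A_2$-type weights), the convergence $\vp_\e\to\tilde\vp$ is strong in $C([0,T];L^2)$.

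Now pass to the limit in the weak formulation with test functions $\eta(t)\phi(x)$, where $\phi\in C_0^\infty(\Om)$. The term $\iint w_\e\nabla\vp_\e\cdot\nabla\phi\,\eta$ splits as $\iint w\nabla\vp_\e\cdot\nabla\phi\,\eta$ plus an error supported in $B(0,\e)$. That error is bounded by
\[
\Big(\iint w_\e|\nabla\vp_\e|^2\Big)^{1/2}\Big(\iint_{B(0,\e)} w_\e|\nabla\phi|^2\Big)^{1/2}\to 0.
\]
The initial data pass to the limit in the standard way. So $\tilde\vp$ is a weak solution of \eqref{12.14.1}, and by uniqueness (the existence/uniqueness result established earlier) $\tilde\vp=\vp$, which proves \eqref{11.21.2}.

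\textbf{Step 3 (normal derivatives; the main obstacle).} Under the extra assumptions, differentiate \eqref{11.14.2} in $t$. The function $\psi_\e=\pt_t\vp_\e$ solves the same equation with data
\[
\big(\vp^1,\; f(0)+\Div(w_\e\nabla\vp^0),\; \pt_tf\big).
\]
Here $\Div(w_\e\nabla\vp^0)$ is bounded in $L^2$ uniformly in $\e$, because $\vp^0$ is smooth and $|\nabla w_\e|\le C\e^{\alpha-1}$ on $B(0,\e)$, which is square-integrable over a ball of volume $\sim\e^N$ (a short check is needed when $N=1$). Step 1 applied to $\psi_\e$ then bounds $\pt_t\vp_\e$ in $L^\infty(0,T;H_0^1(\Om;w))$ and $\pt_{tt}\vp_\e$ in $L^\infty(0,T;L^2)$, uniformly in $\e$.

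Hence $-\Div(w_\e\nabla\vp_\e)=f-\pt_{tt}\vp_\e$ is bounded in $L^\infty(0,T;L^2(\Om))$. On $\Om_0=\Om\setminus\overline{B(0,4R_0)}$ we have $w_\e=w\ge (4R_0)^\alpha$ and $w$ smooth, and $\vp_\e=0$ on $\pt\Om\in C^2$. Localized elliptic regularity up to the boundary (with a cutoff, whose commutator terms are controlled by Step 1) then bounds $\vp_\e$ uniformly in $L^2(0,T;H^2(\Om_1))$ for a slightly smaller collar $\Om_1$ of $\pt\Om$. Also $\pt_t\vp_\e$ is bounded in $L^2(0,T;H^1(\Om_1))$.

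By Aubin--Lions with $H^2(\Om_1)\hookrightarrow\hookrightarrow H^{7/4}(\Om_1)\hookrightarrow H^1(\Om_1)$, we get $\vp_\e\to\vp$ strongly in $L^2(0,T;H^{7/4}(\Om_1))$, after identifying the limit via Step 2. Finally, the trace map $u\mapsto\pt_\nu u$ is continuous from $H^{7/4}(\Om_1)$ into $L^2(\pt\Om)$, which yields \eqref{11.21.7}. (Here $L^2(Q)$ in \eqref{11.21.7} is read as $L^2(\pt Q)$.)

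The delicate points are the uniform $L^2$ bound on $\Div(w_\e\nabla\vp^0)$ near the origin and making sure the cutoff-localized elliptic estimate is uniform in $\e$. Everything else is standard once $w_\e=w$ near $\pt\Om$.
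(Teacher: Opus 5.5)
Your proof is correct. Steps 1--2 coincide with the paper's argument. The uniform energy bound uses $w\le w_\e$ exactly as there. The error on $B_\e$ is removed by the same Cauchy--Schwarz estimate with $w_\e(B_\e)\le C\e^{N+\al}$, and the limit is identified by uniqueness. The only difference is that you use Aubin--Lions to pass to the limit in the initial data. The paper needs no strong convergence there: it passes to the limit in the integrated identity of Corollary~\ref{11.21.C1}, which already contains $(\vp^0,\vp^1)$, and then invokes the equivalence in Lemma~\ref{11.18.L1}.

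The genuine difference is in how you obtain strong convergence of the normal derivatives.

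The first half is shared. Differentiating in $t$, using the uniform bound on $\|\mcA_\e\vp^0\|_{L^2}$ (Lemma~\ref{02.05.L1}), and applying localized elliptic regularity near $\pt\Om$ is exactly how the uniform collar estimate \eqref{11.14.10} is obtained.

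After that, the paper works on the boundary:
\begin{itemize}
\item it bounds $\pt_\nu\vp_\e$ in $L^2(0,T;H^{1/2}(\pt\Om))$ by the trace theorem;
\item it identifies the weak limit through a Green's formula on the collar;
\item it bounds $\pt_t\pt_\nu\vp_\e$ in $L^2(\pt Q)$ by applying the hidden-regularity (multiplier) estimate in \eqref{11.14.8} to $\eta_\e=\pt_t\vp_\e$;
\item it concludes with Aubin--Lions on $\pt\Om$.
\end{itemize}

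You instead run Aubin--Lions in the interior collar. You use the compact embedding $H^2(\Om_1)\hookrightarrow H^{7/4}(\Om_1)$, with the time derivative controlled by the plain energy estimate. You then use continuity of the normal trace from $H^{7/4}(\Om_1)$ into $L^2(\pt\Om)$.

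Your route needs no hidden-regularity estimate for the differentiated problem. The identification of the limit of $\pt_\nu\vp_\e$ also comes for free from the already established convergence $\vp_\e\to\vp$. The paper's route, in exchange, reuses its previously proved boundary estimates as black boxes.

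One small correction: the listed property $w_\e\le Cw$ is false near the origin, since $w_\e(0)=(3\e/8)^\al>0=w(0)$. Your argument never uses it, however. What you actually use is $w\le w_\e$ (from $\psi_\e\ge|x|$) and $w_\e\le M^\al$, and both hold.
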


	 Next, we obtain the controllability for the degenerate hyperbolic equation:
\begin{equation}\label{12.16.2*}
	\begin{cases}
		\pt_{tt}\vp+\mcA\vp=0, &\mbox{in }Q, \\
		\vp=u, &\mbox{on }\Sigma_0, \\
		\vp=0, &\mbox{on }\Sigma\setminus\Sigma_0, \\
		\vp(0)=\vp^0,  \pt_t\vp(0)=\vp^1, &\mbox{in }\Om,
	\end{cases}
\end{equation}
where $\mcA$ is defined by \dref{gbz1}, $\vp^0\in L^2(\Om)$,  $\vp^1\in H^{-1}(\Om;w)$,
$\Sigma_0=\Ga_0\ts (0,T)$, $\Sigma=\pt\Om\ts (0,T)$,   $\vp^0\in H_0^1(\Om;w)$ and $\vp^1\in L^2(\Om)$.

	\begin{theorem}\label{12.04.T4}
		The system \eqref{12.16.2*} is exactly null controllable for $T>\f{2b}{a}$.
	\end{theorem}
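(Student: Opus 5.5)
We argue through the Hilbert Uniqueness Method: exact null controllability of \eqref{12.16.2*} will follow once we have an observability inequality for the homogeneous adjoint system. Concretely, we aim to show that for every solution $\vp$ of \eqref{12.14.1} with $f=0$ and data $(\vp^0,\vp^1)\in H_0^1(\Om;w)\times L^2(\Om)$,
\begin{equation*}
E(0)\equiv\f12\int_\Om\left(|\vp^1|^2+w|\nabla\vp^0|^2\right)\df x\le C\int_{\Sigma_0}w\left|\f{\pt\vp}{\pt\nu}\right|^2\df\sigma\df t .
\end{equation*}
Here $\Ga_0=\{x\in\pt\Om\colon x\cdot\nu>0\}$, $a,b$ are the constants in the multiplier estimates (presumably a lower bound of the form $\psi_\e-x\cdot\nabla\psi_\e\ge a\,\cdot$ and $b\ge\sup_\Om|x|$ up to weight factors), and $T>2b/a$. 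We then define the HUM operator $\Lambda$ from the backward problem driven by $u=w\,\pt_\nu\vp$ on $\Sigma_0$. Observability makes $\Lambda$ coercive on the dual pair $L^2(\Om)\times H^{-1}(\Om;w)$ versus $H_0^1(\Om;w)\times L^2(\Om)$, and Lax--Milgram then produces the control.

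Since a direct multiplier computation is infeasible, as explained in the introduction, we will not prove observability on $\vp$ itself. Instead we prove it uniformly in $\e$ for the approximating uniformly hyperbolic problems \eqref{11.14.2} with weights $w_\e$, and pass to the limit. The steps are as follows.

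\textbf{Step 1.} Take smooth data, $\vp^0\in C_0^\iy(\Om)$ and $\vp^1\in H_0^1(\Om)$. For each $\vp_\e$, Lemma \ref{11.26.L1} with multiplier $H(x)=x$ and Lemma \ref{11.26.L2} give the identity
\begin{equation*}
\f12\int_{\Sigma}w_\e(x\cdot\nu)\left|\f{\pt\vp_\e}{\pt\nu}\right|^2=\left[\int_\Om\pt_t\vp_\e\,(x\cdot\nabla\vp_\e+c\vp_\e)\right]_0^T+\iint_Q\Big(\cdots\Big),
\end{equation*}
where the interior term is bounded below by $a\int_0^TE_\e(t)\df t$. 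This lower bound uses $\psi_\e-x\cdot\nabla\psi_\e\ge0$, i.e.\ $w_\e-\f12x\cdot\nabla w_\e\ge(1-\f\al2)w_\e$ asymptotically.

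\textbf{Step 2.} Energy conservation $E_\e(t)=E_\e(0)$ and the bound $|[\cdots]_0^T|\le 2b\,E_\e(0)$ (via Cauchy--Schwarz and a Hardy/Poincaré inequality with constants uniform in $\e$) yield
\begin{equation*}
(aT-2b)\,E_\e(0)\le C\int_{\Sigma_0}\left|\f{\pt\vp_\e}{\pt\nu}\right|^2 .
\end{equation*}
Note that $w_\e=w$ near $\pt\Om$ because $\e<R_0$.

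\textbf{Step 3.} Let $\e\to0$. By \eqref{11.21.7} in Theorem \ref{11.20.T2}, the right-hand side converges. On the left, $E_\e(0)$ converges to $E(0)$, since the data are fixed and $w_\e\to w$ in $L^1$. This gives the inequality for smooth data, and density extends it to the full energy space.

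The main obstacle is the uniformity in $\e$ of Step 1: the constant $a$ must not degenerate. Near the origin we must control the commutator terms $\nabla w_\e\cdot x$ in the region $|x|<\e$, which is where the structural property $\psi_\e-x\cdot\nabla\psi_\e\ge\f{3}{8\e^3}(\e^2-|x|^2)^2$ is used, together with the lower-order term coming from $c\vp_\e$. Here we would invoke the weighted Hardy inequality $\int|x|^{\al-2}\vp^2\le C\int|x|^\al|\nabla\vp|^2$, valid for $\al<2$ and $N\ge1$ on $H_0^1(\Om;w)$, with its uniform $\e$-version.
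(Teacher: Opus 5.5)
Your architecture is the paper's. HUM reduces Theorem \ref{12.04.T4} to observability. Observability is proved with the multiplier $H(x)=x$ for the uniformly hyperbolic problems \eqref{11.14.2}, with $\e$-independent constants (Theorem \ref{11.27.T1}). It is then passed to the limit $\e\to0$ for $\vp^0\in C_0^\iy(\Om)$, $\vp^1\in H_0^1(\Om)$ via \eqref{11.21.7} and $E_\e(0)\to E(0)$ (Lemma \ref{12.04.L2}), and extended by density using \eqref{11.18.1} (Theorem \ref{12.04.T3}).

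The one step that fails as you justify it is Step 2. The bound $|[\cdots]_0^T|\le 2bE_\e(0)$ cannot come from Cauchy--Schwarz plus a Hardy/Poincar\'e inequality. If you absorb the zero-order part $c\vp_\e$ of your multiplier using the Poincar\'e constant $C_P$ of Lemma \ref{08.16.L2}, you only get $|[\cdots]_0^T|\le 2(b+|c|\sqrt{C_P})E_\e(0)$. That yields observability, and hence controllability, only for $T>2(b+|c|\sqrt{C_P})/a$. This is strictly weaker than the threshold $T>2b/a$ in the statement, and $c\neq0$ is forced because the zero-order term is what produces equipartition.

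What is needed is an identity, not an inequality. Take $c=P/2$ with $P=N-a$ as in \eqref{11.27.3}. Since $\vp=0$ on $\pt\Om$ and $\Div_0 x=N$, integrating $\vp\,(x\cdot\nabla\vp)=\f12x\cdot\nabla(\vp^2)$ by parts gives
\[
\left\|x\cdot\nabla\vp+\f P2\vp\right\|_{L^2(\Om)}^2=\|x\cdot\nabla\vp\|_{L^2(\Om)}^2-\f{N^2-a^2}{4}\|\vp\|_{L^2(\Om)}^2\le b^2\int_\Om w_\e|\nabla\vp|^2\df x .
\]
So the zero-order term only helps, and $|(\pt_t\vp,x\cdot\nabla\vp+\f P2\vp)_{L^2(\Om)}|\le bE_\e(0)$ at every time (Lemma \ref{11.26.L2}). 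In the interior the zero-order contribution is also exact. Because $P$ is constant, Proposition \ref{11.24.P1} 2) has $\mcA_\e P=0$ and $\nabla_{g_\e}P=0$, and its boundary terms vanish. No Hardy inequality enters the observability estimate at all. (The one you quote also fails for $N=1$, $\al\le1$; Lemma \ref{08.15.L1} is stated only for $N\ge2$.)

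Consequently the ``main obstacle'' is not the lower-order term. Uniformity of $a$ follows entirely from $x\cdot\nabla\psi_\e\le\psi_\e$, i.e.\ $\psi_\e-x\cdot\nabla\psi_\e=\f{3}{8\e^3}(\e^2-|x|^2)^2\ge0$ on $B_\e$. This gives, pointwise and not merely asymptotically, $\lg D_XH,X\rg_{g_\e}=\bigl(1-\f\al2\psi_\e^{-1}x\cdot\nabla\psi_\e\bigr)|X|_{g_\e}^2\ge(1-\f\al2)|X|_{g_\e}^2$. This is shorter than the paper's route through Lemma \ref{12.04.L1}. It also gives a constant at least as large as $a=\f38\min\{1,2-\al\}$, which only lowers the time threshold and is therefore compatible with the statement.
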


The paper is organized as follows. In Section \ref{S2}, we establish the existence of solutions to equation \eqref{12.14.1}. In Section \ref{S3}, we construct a sequence of uniformly hyperbolic equations that approximate the degenerate hyperbolic equation. In Section \ref{S4}, we first establish the observability inequality for the approximate uniformly hyperbolic equations and then deduce the corresponding observability inequality for the degenerate hyperbolic equation.

	\section{Existence of the solution to equation \eqref{12.14.1}}\label{S2}
	
	 In this section, we first introduce the weighted Sobolev spaces that are intricately linked to equation \eqref{12.14.1}. Subsequently, we employ spectral theory to demonstrate the existence of solutions for equation \eqref{12.14.1}. Finally, we provide a lemma that delineates an equivalent expression for the existence of solutions to equation \eqref{12.14.1}.
	
	\subsection{Solution spaces}

	 We introduce the definition
	\begin{equation*}
		H^{1}(\Omega; w) = \left\{u \in L^2(\Omega) \colon \int_\Om (\nabla u\cdot \nabla u)w\df x<+\iy\right\},
	\end{equation*}
	where $\nabla u=(\pt_{x_1}u,\cdots, \pt_{x_N}u)$.  The inner product and the norm on $H^1(\Omega; w)$
 are defined, respectively, as
	\begin{equation*}
		(u, v)_{H^1(\Omega; w)} =\int_\Om uv\df x+\int_\Om (\nabla u\cdot \nabla v)w\df x, \mbox{ and }  \|u\|_{H^1(\Om;w)}=(u,u)_{H^1(\Om;w)}^\f{1}{2}.
	\end{equation*}
	We define
	\begin{equation*}
		H_{0}^1(\Omega; w) = \text{the closure of } \mathcal{D}(\Omega) \text{ in } H^1(\Omega; w),
	\end{equation*}
	where $\mathcal{D}(\Omega) = C_0^\infty(\Omega)$ is the space of test functions.
	We denote by $H^{-1}(\Omega; w)$ the dual space of $H_{0}^1(\Omega; w)$ with pivot space $L^2(\Om)$, which is a subspace of $\mathcal{D}'(\Omega)$.
	
	Now, we define
	\begin{equation*}
		H^2(\Om;w)=\left\{u\in H^1(\Om;w)\colon \int_\Om (\mcA u)^2\df x<+\iy\right\},
	\end{equation*}
where $\mcA$ is given by \dref{gbz1}.
	The inner product and the norm on $H^2(\Om;w)$  are given, respectively, by
	\begin{equation*}
		(u,v)_{H^2(\Om;w)}=(u,v)_{H^1(\Om;w)}+\int_\Om (\mcA u)(\mcA v)\df x, \mbox{ and } \|u\|_{H^2(\Om;w)}=(u,u)_{H^2(\Om;w)}^\f{1}{2}.
	\end{equation*}

	It is  well-known the spaces
\begin{equation*}
		(H_0^1(\Om;w),(\cdot,\cdot)_{H_0^1(\Om;w)}),\ (H^1(\Om;w),(\cdot,\cdot)_{H^1(\Om;w)}),\mbox{ and } (H^2(\Om;w), (\cdot, \cdot)_{H^2(\Om;w)})
	\end{equation*}
	are Hilbert spaces (\cite{GC,Heinonen}).

	The following lemma is a version of Hardy's inequality.
	
	\begin{lemma}\label{08.15.L1}
		Let $N \geq 2$ and $\alpha \in (0, 2)$. Then, for all $u \in H_0^1(\Omega; w)$, the following inequality is satisfied:
		\begin{equation*}
			(N - 2 + \alpha)^2\int_\Om |x|^{\al-2}u^2\df x \leq 4\int_\Om |x|^\al (\nabla u\cdot \nabla u)\df x.
		\end{equation*}
	\end{lemma}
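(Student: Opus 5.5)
The plan is the standard multiplier argument for the Hardy inequality: prove it for $u\in C_0^\infty(\Om)$, then pass to $H_0^1(\Om;w)$ by density. The density step is legitimate because $H_0^1(\Om;w)$ is defined as the closure of $\mathcal D(\Om)$.

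For $u\in C_0^\infty(\Om)$, consider the vector field $F(x)=|x|^{\al-2}x$. We have
\begin{equation*}
\Div F=(N-2+\al)|x|^{\al-2},
\end{equation*}
and this is locally integrable near $0$ since $\al-2>-2\ge -N$ when $N\ge2$. Multiply by $u^2$ and integrate by parts. To justify this near the singular point, I will excise a ball $B(0,\de)$. The boundary term on $\pt B(0,\de)$ is bounded by $C\de^{\al-1}\de^{N-1}\|u\|_\iy^2=C\de^{N-2+\al}$. This tends to $0$ because $N-2+\al>0$, and there is no outer boundary term since $u$ has compact support. The result is
\begin{equation*}
(N-2+\al)\int_\Om |x|^{\al-2}u^2\df x=-2\int_\Om |x|^{\al-2}u\,(x\cdot\nabla u)\df x .
\end{equation*}

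Next I estimate the right-hand side with Cauchy--Schwarz, splitting the weight as $|x|^{\al-2}|x|=|x|^{\f{\al}{2}-1}\cdot|x|^{\f{\al}{2}}$:
\begin{equation*}
\Bigl|\int_\Om |x|^{\al-2}u\,(x\cdot\nabla u)\df x\Bigr|\le\Bigl(\int_\Om|x|^{\al-2}u^2\df x\Bigr)^{\f12}\Bigl(\int_\Om|x|^{\al}|\nabla u|^2\df x\Bigr)^{\f12}.
\end{equation*}
Set $I=\int_\Om|x|^{\al-2}u^2\df x$, which is finite for smooth $u$. Combining the two displays gives $(N-2+\al)I\le 2I^{1/2}\bigl(\int_\Om|x|^\al|\nabla u|^2\df x\bigr)^{1/2}$. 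Dividing by $I^{1/2}$ (the case $I=0$ is trivial) and squaring yields the claimed inequality for smooth $u$.

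For general $u\in H_0^1(\Om;w)$, take $u_n\in C_0^\infty(\Om)$ with $u_n\to u$ in $H^1(\Om;w)$.
\begin{itemize}
\item The right-hand sides converge, since they are controlled by the weighted norm.
\item Along a subsequence $u_n\to u$ a.e., because $u_n\to u$ in $L^2(\Om)$.
\item Fatou's lemma then gives $\int_\Om|x|^{\al-2}u^2\df x\le\liminf_n\int_\Om|x|^{\al-2}u_n^2\df x$.
\end{itemize}
This completes the extension to all of $H_0^1(\Om;w)$.

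The only delicate point is the integration by parts through the singularity at the origin. It is handled by the excision estimate, which works precisely because $N\ge2$ and $\al>0$ make $N-2+\al>0$. The Fatou step requires no a priori knowledge that $|x|^{\f{\al}{2}-1}u\in L^2$.
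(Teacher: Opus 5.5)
Your proof is correct and is essentially the argument the paper has in mind: the paper defers to the literature and to the multiplier computation in the proof of Lemma~\ref{08.16.L2}. That computation integrates $z^2$ against the divergence of $\psi_\e^{\al-2}x$ and then applies Cauchy--Schwarz, which is exactly your argument with the regularized weight $\psi_\e^{\al-2}$ in place of $|x|^{\al-2}$. Your excision of $B(0,\de)$ and the density/Fatou step supply the justification at the origin that the paper leaves implicit; for $\psi_\e$, which is $C^{2,1}$ and bounded below by $3\e/8$, no such justification is needed.
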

	
	\begin{proof}
		  The first part of this lemma can be deduced from \cite[Lemma 3.1]{Wu} or \cite[Proposition 2.1 (1)]{Stuart}. A similar proof can be found in the proof of Lemma \ref{08.16.L2} below. 
	\end{proof}
	\begin{remark}\label{08.16.R1}
		From Lemma \ref{08.15.L1} and given that $\alpha \in (0, 2)$, we can derive
		\begin{equation}\label{12.09.2}
			\int_\Om u^2\df x\leq M^{2-\al}\int_\Om |x|^{\al-2}u^2\df x\leq \f{4M^{2-\al}}{(N-2+\al)^2}\int_\Om w|\nabla u|^2\df x, \text{ with } M := \sup_{x \in \Omega} |x| + 1,
		\end{equation}
		 This is  the Poincar\'{e} inequality.
		In particular, the norm
		\begin{equation}\label{08.19.2}
			\|u\|_{H_0^1(\Omega; w)} = \left(\int_\Omega (\nabla u \cdot \nabla u) w \, \mathrm{d}x\right)^{\frac{1}{2}}
		\end{equation}
		is an equivalent norm in $H_0^1(\Omega; w)$.  From now on, we use \eqref{08.19.2} to define the norm of $H_0^1(\Omega; w)$.
	\end{remark}
	
The following Lemma \ref{08.15.L4} is just the  \cite[Theorem 3.4]{Wu} or \cite[Proposition 2.1 (5)]{Stuart}.

	\begin{lemma}\label{08.15.L4}
		The embedding $H_0^1(\Omega; w) \hookrightarrow L^2(\Omega)$ is compact.
	\end{lemma}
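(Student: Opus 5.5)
The plan is to show that every bounded sequence in $H_0^1(\Om;w)$ has a subsequence converging strongly in $L^2(\Om)$. Since $H_0^1(\Om;w)$ is a Hilbert space, reflexivity lets us assume $u_n\rightharpoonup u$ weakly in $H_0^1(\Om;w)$. By Remark \ref{08.16.R1} this also gives $u_n\rightharpoonup u$ weakly in $L^2(\Om)$. Subtracting the limit, we may take $u=0$ and $\|u_n\|_{H_0^1(\Om;w)}\le C$, and we must prove $\|u_n\|_{L^2(\Om)}\to 0$. We split $\Om$ into a small ball $B(0,\de)$ around the degeneracy point and its complement $\Om\se B(0,\de)$, and handle the two parts separately.

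\emph{Near the origin.} Lemma \ref{08.15.L1} gives a uniform smallness estimate. For $|x|<\de$ we have $1\le \de^{2-\al}|x|^{\al-2}$, since $\al<2$. Hence
\begin{equation*}
\int_{B(0,\de)}u_n^2\df x\le \de^{2-\al}\int_\Om |x|^{\al-2}u_n^2\df x\le \f{4\de^{2-\al}}{(N-2+\al)^2}\,C^2 ,
\end{equation*}
and the right-hand side tends to $0$ as $\de\to0$, uniformly in $n$.

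\emph{Away from the origin.} On $\Om_\de:=\Om\se \ol{B(0,\de)}$ the weight satisfies $w\ge \de^\al$. Therefore $\int_{\Om_\de}|\nabla u_n|^2\df x\le \de^{-\al}C^2$, so $\{u_n\}$ is bounded in $H^1(\Om_\de)$. Two facts are needed to pass from this bound to compactness. First, membership in $H^1(\Om_\de)$ is justified because elements of $H_0^1(\Om;w)$ are limits of $C_0^\iy$ functions and the $H^1(\Om_\de)$ norm is controlled by the weighted norm there. Second, although $\Om_\de$ need not be Lipschitz, we can avoid the boundary issue: extend $u_n$ by zero outside $\Om$ and multiply by a cutoff $\eta\in C^\iy$ with $\eta=0$ on $B(0,\de/2)$ and $\eta=1$ off $B(0,\de)$. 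The functions $\eta u_n$ are then bounded in $H_0^1(\Om)$, using the $L^2$ bound from \eqref{12.09.2} and the bound on $\nabla\eta$. The classical Rellich--Kondrachov theorem then gives $\eta u_n\to0$ strongly in $L^2(\Om)$, since the weak limit is $0$. Consequently $\int_{\Om\se B(0,\de)}u_n^2\df x\to0$ for each fixed $\de$.

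Combining the two parts,
\begin{equation*}
\limsup_{n\to\iy}\|u_n\|_{L^2(\Om)}^2\le \f{4\de^{2-\al}}{(N-2+\al)^2}C^2
\end{equation*}
for every $\de\in(0,8R_0)$. Letting $\de\to0$ yields $u_n\to0$ in $L^2(\Om)$, which proves compactness. The only delicate point is the passage from the weighted space to the unweighted $H_0^1$ away from the origin. The cutoff argument handles it, and it relies on the density of $C_0^\iy(\Om)$ in $H_0^1(\Om;w)$ built into the definition of that space.
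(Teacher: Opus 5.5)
The paper gives no argument of its own for this lemma. It simply quotes \cite[Theorem 3.4]{Wu} and \cite[Proposition 2.1 (5)]{Stuart}. Your proof is correct and self-contained, and it uses only ingredients the paper already has.

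\begin{itemize}
\item \textbf{Near the origin.} The Hardy inequality of Lemma~\ref{08.15.L1} gives the uniform tail bound $\int_{B(0,\delta)}u^2\,dx\le \frac{4\delta^{2-\alpha}}{(N-2+\alpha)^2}\|u\|_{H_0^1(\Omega;w)}^2$. This is exactly where $\alpha<2$ enters. So does $N\ge 2$, which guarantees $N-2+\alpha>0$; this is the same restriction under which the paper states Lemma~\ref{08.15.L1} and Remark~\ref{08.16.R1}.
\item \textbf{Away from the origin.} Your cutoff $\eta$ moves everything into the unweighted space $H_0^1(\Omega)$, so classical Rellich--Kondrachov applies with no boundary regularity needed. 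The relevant lower bound is $w\ge(\delta/2)^\alpha$ on $\supp\eta$ rather than $\delta^\alpha$, which is harmless.
\item \textbf{Membership in $H_0^1(\Omega)$.} Your justification is sound: approximate $u_n$ by $\phi_k\in C_0^\infty(\Omega)$ in the weighted norm and observe that $\eta\phi_k\to\eta u_n$ in $H^1(\Omega)$.
\item \textbf{Identifying the limit.} The strong $L^2$ limit of $\eta u_n$ equals the weak limit $0$ by the usual subsequence-of-subsequences argument, so this step is also fine.
\end{itemize}

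Compared with the paper's bare citation, your route makes the mechanism transparent. It yields an explicit quantitative estimate, namely a smallness of order $\delta^{2-\alpha}$ near the degenerate point. It also lets the lemma be checked within the paper itself, relying only on Lemma~\ref{08.15.L1} and the classical Rellich theorem. The citation is shorter, but it leaves the reader to verify that the hypotheses of those references match the present setting.
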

	
 The following lemma demonstrates that
  $C_0^\infty(\Omega)$ is dense in $D(\mathcal A)$ because  $C_0^\iy(\Om)$ is dense in $H_0^1(\Om;w)$.
	
\begin{lemma}\label{12.01.L1}
	 $C_0^\iy(\Om)\s D(\mcA)$ where $\mcA$ is defined as \dref{gbz1}.
\end{lemma}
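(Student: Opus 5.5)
Fix $\vp\in C_0^\iy(\Om)$ and write $K=\supp\vp$, a compact subset of $\Om$. We have to check three things: $\vp\in H^1(\Om;w)$, $\vp\in H_0^1(\Om;w)$, and $\mcA\vp\in L^2(\Om)$. The first two are immediate. $\vp$ and $\nabla\vp$ are bounded, and $w=|x|^\al\le M^\al$ on $\Om$, so $\int_\Om w|\nabla\vp|^2\df x<+\iy$. Membership in $H_0^1(\Om;w)$ then holds by the definition of that space as the closure of $\mathcal D(\Om)$.

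The main work is computing $\mcA\vp$ as a distribution and showing it lies in $L^2(\Om)$. Away from the origin $w$ is smooth. There $\nabla w=\al|x|^{\al-2}x$, and the product rule gives
\begin{equation*}
-\Div(w\nabla\vp)=-|x|^\al\De\vp-\al|x|^{\al-2}\,x\cdot\nabla\vp \quad\text{in } \Om\se\{0\}.
\end{equation*}
To get this identity in $\mathcal D'(\Om)$, including across the point $0$, I would argue as follows. For a test function $\psi$, the distributional action is $\langle \mcA\vp,\psi\rangle=\int_\Om w\nabla\vp\cdot\nabla\psi\df x$. Cut out a small ball $B(0,r)$ and integrate by parts on $\Om\se B(0,r)$. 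The boundary term on $\pt B(0,r)$ is bounded by $C r^{\al}\cdot r^{N-1}$, since $|w\nabla\vp|\le C r^\al$ there. This tends to $0$ as $r\to0$ because $\al+N-1>0$.

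For the limit of the interior integral we need the right-hand side above to be integrable near $0$. The first term $|x|^\al\De\vp$ is bounded. For the second we use $|x|^{\al-2}|x\cdot\nabla\vp|\le C|x|^{\al-1}$. Since $\al>0$, the exponent satisfies $2(\al-1)>-2\ge -N$ when $N\ge2$, so $|x|^{\al-1}\in L^2_{loc}$ and in particular it is integrable. Hence the pointwise formula holds in $\mathcal D'(\Om)$.

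It remains to see that this function is in $L^2(\Om)$, which follows from the same two bounds. The term $|x|^\al\De\vp$ is bounded with compact support. The term $\al|x|^{\al-1}|\nabla\vp|$ is square-integrable, because $\int_K|x|^{2\al-2}\df x<\iy$ exactly when $2\al-2>-N$, and this holds since $\al>0$ and $N\ge2$. For $N=1$ we still have $2\al-2>-2>-1$ only if $\al>\f12$. So in the one-dimensional case I would need either a parity refinement or a restriction to $N\ge2$; the paper's setting of Lemma \ref{08.15.L1} already assumes $N\ge2$, and I would adopt that assumption.

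The one delicate point is justifying the integration by parts near the singularity $0$, that is, the vanishing boundary term on $\pt B(0,r)$ together with the local integrability of $|x|^{\al-1}$. Once that is done, $\mcA\vp\in L^2(\Om)$, so $\vp\in H^2(\Om;w)\cap H_0^1(\Om;w)=D(\mcA)$.
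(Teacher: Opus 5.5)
Your proof is correct and is essentially the paper's: the same product-rule formula $\mcA\vp=-|x|^\al\De\vp-\al|x|^{\al-2}x\cdot\nabla\vp$ and the same integrability of $|x|^{2\al-2}$ (the paper's bound $\int_0^M r^{2\al+N-3}\df r<\iy$, which likewise tacitly requires $N\ge2$). Your cut-off-ball argument adds the justification, omitted in the paper, that this pointwise formula is the distributional divergence across $0$; one small correction to your aside is that no parity refinement can rescue $N=1$, since for $\al\le\f12$ and $\vp'(0)\ne0$ the term $\al|x|^{\al-1}\vp'(0)$ is genuinely not square-integrable near $0$, so assuming $N\ge2$ (or $\al>\f12$) is actually necessary.
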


\begin{proof}
	For each $u\in C_0^\iy(\Om)$, it has  $u\in H_0^1(\Om;w)$ and
	\begin{equation*}
		\begin{split}
			\int_\Om (\mcA u)^2\df x
			&=\int_\Om \left[\Div(|x|^\al \nabla u)\right]^2\df x=\int_\Om \left(|x|^\al \De u+\al |x|^{\al-2}x\cdot \nabla u\right)^2\df x\\
			&\leq C\sup_{x\in\Om}|\De u(x)|^2+C\sup_{x\in\Om}|\nabla u(x)|^2\int_0^M  r^{2\al+N-3}\df r<+\iy,
		\end{split}
	\end{equation*}
	where $M$ is a constant that is defined in \eqref{12.09.2}, and  the positive constant $C$ depends only on $\al, M$, $|\Om|$ and $u$. 
\end{proof}

\subsection{Spectrum}

Now, we consider the following degenerate elliptic equation
\begin{equation}\label{02.12.1}
	\begin{cases}
		\mcA u=g, &\mbox{in }\Om,\\
		u=0, &\mbox{on }\pt\Om, 
	\end{cases}
\end{equation}
where $g\in L^2(\Om)$. 

\begin{definition}\label{02.12.D1}
	We call $u\in H_0^1(\Om;w)$ is a weak solution of \eqref{02.12.1} with respect to $g$, provided
	\begin{equation*}
		\int_\Om w\nabla u\cdot \nabla v\df x=\int_\Om gv\df x
	\end{equation*}
	for all $v\in H_0^1(\Om;w)$. 
\end{definition}

\begin{lemma}\label{02.12.L1}
	Let $\al\in (0,2)$. The equation \eqref{02.12.1} has a unique solution $u\in D(\mcA)$ with respect to $g$. Moreover, there exists a positive constant $C$, depending only on $\al$, such that 
	\begin{equation*}
		\int_\Om w\nabla u\cdot \nabla u\df x\leq C\int_\Om g^2\df x. 
	\end{equation*}
\end{lemma}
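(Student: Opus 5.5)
We will obtain existence and uniqueness from the Lax--Milgram theorem on the Hilbert space $H_0^1(\Om;w)$, normed by \eqref{08.19.2}, and then read off $u\in D(\mcA)$ from the weak formulation.

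First, define the bilinear form $a(u,v)=\int_\Om w\nabla u\cdot\nabla v\df x$ on $H_0^1(\Om;w)$. By Cauchy--Schwarz it is bounded, since $|a(u,v)|\le\|u\|_{H_0^1(\Om;w)}\|v\|_{H_0^1(\Om;w)}$, and it is coercive with constant $1$, since $a(u,u)=\|u\|^2_{H_0^1(\Om;w)}$. This norm is legitimate by Remark \ref{08.16.R1}. Next, define the linear functional $\ell(v)=\int_\Om gv\df x$. It is continuous on $H_0^1(\Om;w)$ because the Poincar\'e inequality \eqref{12.09.2} gives
\begin{equation*}
|\ell(v)|\le \|g\|_{L^2(\Om)}\|v\|_{L^2(\Om)}\le \f{2M^{\f{2-\al}{2}}}{N-2+\al}\,\|g\|_{L^2(\Om)}\|v\|_{H_0^1(\Om;w)}.
\end{equation*}
Lax--Milgram then yields a unique $u\in H_0^1(\Om;w)$ with $a(u,v)=\ell(v)$ for all $v\in H_0^1(\Om;w)$. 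In other words, $u$ is the unique weak solution in the sense of Definition \ref{02.12.D1}.

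For the estimate, we test with $v=u$ and use the bound on $\ell$:
\begin{equation*}
\|u\|^2_{H_0^1(\Om;w)}\le C_1\|g\|_{L^2(\Om)}\|u\|_{H_0^1(\Om;w)},\qquad C_1=\f{2M^{\f{2-\al}{2}}}{N-2+\al}.
\end{equation*}
Dividing by $\|u\|_{H_0^1(\Om;w)}$ and squaring gives $\int_\Om w|\nabla u|^2\df x\le C_1^2\int_\Om g^2\df x$. The constant $C=C_1^2$ depends on $\al$, $N$ and $M$. We note that the statement's phrase ``depending only on $\al$'' implicitly fixes $N$ and $\Om$.

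It remains to show $u\in D(\mcA)=H^2(\Om;w)\cap H_0^1(\Om;w)$. Restrict the weak formulation to $v\in C_0^\iy(\Om)$. This gives $\int_\Om w\nabla u\cdot\nabla v\df x=\int_\Om gv\df x$, which says precisely that $-\Div(w\nabla u)=g$ in $\mathcal D'(\Om)$. Hence $\mcA u=g\in L^2(\Om)$, so $\int_\Om(\mcA u)^2\df x<\iy$ and $u\in H^2(\Om;w)$. Here $\mcA u$ is understood distributionally, which is the natural reading of the definition of $H^2(\Om;w)$. Uniqueness within $D(\mcA)$ follows from uniqueness of the weak solution: any solution in $D(\mcA)$ satisfies the weak identity for $v\in C_0^\iy(\Om)$, and this extends to all $v\in H_0^1(\Om;w)$ by density and the continuity of both sides.

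The only delicate point is to make sure the distributional interpretation of $\mcA u$ is consistent with how $H^2(\Om;w)$ is defined. The step that makes the whole argument work is the Poincar\'e/Hardy inequality of Remark \ref{08.16.R1}, which supplies both the coercivity and the continuity of $\ell$. The case $N=1$ is not covered by Lemma \ref{08.15.L1} as stated, and the problem is set in higher dimension, so we take $N\ge 2$ throughout.
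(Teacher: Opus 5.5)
Your proposal is correct and follows the paper's proof: Lax--Milgram on $H_0^1(\Om;w)$, with continuity of $v\mapsto\int_\Om gv\df x$ supplied by the Poincar\'e inequality of Remark \ref{08.16.R1}, and then reading off $\mcA u=g\in L^2(\Om)$ to conclude $u\in H^2(\Om;w)$. You also make explicit several points the paper leaves implicit: the energy estimate obtained by testing with $v=u$, the distributional meaning of $\mcA u=g$ together with uniqueness within $D(\mcA)$, and the fact that the constant $C=4M^{2-\al}/(N-2+\al)^2$ depends on $N$ and $M$ as well as on $\al$.
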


\begin{proof}
	Denote 
	\begin{equation*}
		B[u,v]=\int_\Om w\nabla u\cdot \nabla v\df x, \mbox{ for all } u, v\in H_0^1(\Om;w). 
	\end{equation*}
	Then $B[u,u]=\int_\Om w\nabla u\cdot \nabla u\df x$ and 
	\begin{equation*}
		|B[u,v]|\leq \|u\|_{H_0^1(\Om;w)}\|v\|_{H_0^1(\Om;w)}.
	\end{equation*}
	Note that $f\in L^2(\Om)\s H^{-1}(\Om;w)$ by 
	\begin{equation*}
		\int_\Om fv\df x\leq \|f\|_{L^2(\Om)}\|v\|_{L^2(\Om)}\leq C\|f\|_{L^2(\Om)}\|v\|_{H_0^1(\Om;w)}
	\end{equation*}
	according to Remark \ref{08.16.R1}. From the Lax-Milgram theorem, we get there exists a unique $u\in H_0^1(\Om;w)$ such that $B[u,v]=(f,v)_{L^2(\Om)}$. 
	
	Finally, from $\mcA u=g$ we get $u\in H^2(\Om;w)$. This complete the proof of this lemma. 
\end{proof}

\begin{notation}\label{02.04.N1}
 Based on Remark \ref{08.16.R1} and Lemmas \ref{08.15.L4} and \ref{02.12.L1}, we can conclude that the degenerate partial differential operator
$\mcA$, as defined by \dref{gbz1}, possesses a discrete point spectrum given by
\begin{equation}\label{11.14.3}
	0<\la_1\leq \la_2\leq \la_3\leq  \cdots \ra +\iy.
\end{equation}
 This means that for each  $\la_n\ (n\in\N\equiv\{1,2,\cdots\})$, the following equation holds:
\begin{equation}\label{02.12.2}
	\begin{cases}
		\mcA \Phi_n=\la_n \Phi_n, & \mbox{in }\Om, \\
		\Phi_n=0, &\mbox{in }\pt\Om.
	\end{cases}
\end{equation}
 Furthermore, from \eqref{12.09.2}, we derive
\begin{equation}\label{11.30.9}
	\la_1=\inf_{0\neq u\in H_0^1(\Om;w)}\f{\int_\Om w\nabla u\cdot \nabla u\df x}{\int_\Om u^2\df x}\geq \f{(N-2+\al)^2}{4M^{2-\al}}.
\end{equation}
  For subsequent discussions, we let $\Phi_n(x)\in D(\mcA)$ denote the
$n$-th eigenfunction  of $\mcA$  corresponding to the eigenvalue $\la_n$ where $n\in\N$.
 The set $\{\Phi_n\}_{n\in\N}$   forms an orthonormal basis for  $L^2(\Om)$, and, in addition, constitutes an orthogonal subset of $H_0^1(\Om;w)$.
  For further details, refer to \cite[Theorem 7, Appendix D, p.728]{Evans} or the following  Lemma \ref{06.28.L1}.
\end{notation}

\begin{lemma}\label{06.28.L1}
	Let $u=\sum_{i=1}^\iy u_i \Phi_i\in H_0^1(\Om;w)$ with $u_i=(u,\Phi_i)_{L^2(\Om)}$ for all $i\in\N$.
 Then, $\nabla u=\sum_{i=1}^\iy u_i\nabla \Phi_i$ and $\|u\|_{H_0^1(\Om;w)}=(\sum_{i=1}^\iy u_i^2\la_i)^\f{1}{2}$,  Moreover,
	\begin{equation*}
		u\in H^2(\Om;w)\Lra \sum_{i=1}^\iy u_i^2\la_i^2<\iy,
	\end{equation*}
	and
	\begin{equation*}
		\mcA u=\sum_{i=1}^\iy u_i\la_i\Phi_i \mbox{ and } \|\mcA u\|_{L^2(\Om)}=\left(\sum_{i=1}^\iy u_i^2\la_i^2\right)^\f{1}{2}.
	\end{equation*}
\end{lemma}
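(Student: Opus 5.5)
The plan is to use the spectral decomposition from Notation \ref{02.04.N1} together with the weak formulation of Definition \ref{02.12.D1}. Let $u\in H_0^1(\Om;w)$ with $u_i=(u,\Phi_i)_{L^2(\Om)}$. Since $\{\Phi_n\}$ is an orthonormal basis of $L^2(\Om)$, we have $u=\sum_i u_i\Phi_i$ in $L^2(\Om)$.

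The first key observation is that each $\Phi_i$ is a weak solution of \eqref{02.12.1} with right-hand side $\la_i\Phi_i$. Hence
\begin{equation*}
B[\Phi_i,v]=\la_i(\Phi_i,v)_{L^2(\Om)} \quad\text{for all } v\in H_0^1(\Om;w).
\end{equation*}
Taking $v=\Phi_j$ gives $B[\Phi_i,\Phi_j]=\la_i\delta_{ij}$, so $\{\Phi_i/\sqrt{\la_i}\}$ is an orthonormal system in $H_0^1(\Om;w)$ for the inner product $B$ (which induces the norm \eqref{08.19.2}). Taking $v=u$ gives
\begin{equation*}
B[u,\Phi_i/\sqrt{\la_i}]=\sqrt{\la_i}\,u_i.
\end{equation*}
Bessel's inequality in the Hilbert space $(H_0^1(\Om;w),B)$ then yields $\sum_i\la_iu_i^2\le\|u\|^2_{H_0^1(\Om;w)}<\iy$. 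Consequently the partial sums $S_n=\sum_{i\le n}u_i\Phi_i$ are Cauchy in $H_0^1(\Om;w)$, and they converge there to some $\tilde u$. By Remark \ref{08.16.R1}, convergence in $H_0^1(\Om;w)$ implies convergence in $L^2(\Om)$, so $\tilde u=u$. Convergence in $H_0^1(\Om;w)$ means $\sqrt w\,\nabla S_n\to\sqrt w\,\nabla u$ in $L^2(\Om)$, which is the meaning of $\nabla u=\sum u_i\nabla\Phi_i$. Finally $\|u\|^2_{H_0^1(\Om;w)}=\lim B[S_n,S_n]=\sum\la_iu_i^2$.

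For the $H^2$ characterization, suppose first that $u\in H^2(\Om;w)$, so $g:=\mcA u\in L^2(\Om)$. Here I interpret $\mcA u$ distributionally, which coincides with the weak form because $C_0^\iy$ is dense in $H_0^1(\Om;w)$: $B[u,v]=(g,v)_{L^2}$ for all $v\in H_0^1(\Om;w)$. Testing with $v=\Phi_i$ and using symmetry of $B$ gives $(g,\Phi_i)=B[\Phi_i,u]=\la_iu_i$. Parseval then gives $\mcA u=\sum\la_iu_i\Phi_i$ in $L^2(\Om)$ and $\|\mcA u\|^2=\sum\la_i^2u_i^2<\iy$.

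Conversely, suppose $\sum\la_i^2u_i^2<\iy$, and set $g=\sum\la_iu_i\Phi_i\in L^2(\Om)$. For $v\in H_0^1(\Om;w)$ with coefficients $v_i$, the first part gives convergence of $S_n$ to $u$ in $H_0^1(\Om;w)$. Therefore
\begin{equation*}
B[u,v]=\lim_n B[S_n,v]=\lim_n\sum_{i\le n}\la_iu_iv_i=(g,v)_{L^2(\Om)}.
\end{equation*}
So $u$ is the weak solution from Lemma \ref{02.12.L1} with datum $g$, hence $\mcA u=g\in L^2(\Om)$ and $u\in H^2(\Om;w)$.

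The main point needing care is the identification of the distributional $\mcA u$ with the weak form. This requires that the pairing $\int w\nabla u\cdot\nabla v$ extend from $v\in C_0^\iy(\Om)$ to all $v\in H_0^1(\Om;w)$. That extension holds by density of $C_0^\iy(\Om)$ in $H_0^1(\Om;w)$ (its definition) and continuity of both sides in $v$, the right side via \eqref{12.09.2}.
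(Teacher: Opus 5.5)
Your proof is correct and follows the same spectral argument as the paper: the orthogonality $B[\Phi_i,\Phi_j]=\la_i\delta_{ij}$ in $H_0^1(\Om;w)$, expansion of $u$ in the normalized system $\{\la_i^{-1/2}\Phi_i\}$ with coefficients $\la_i^{1/2}u_i$, and identification of $\mcA u$ through the weak form tested against eigenfunctions. The differences are only in execution. You obtain $H_0^1(\Om;w)$-convergence of the partial sums from Bessel's inequality together with identification of the $L^2$ limit, whereas the paper proves completeness by contradiction. In the forward $H^2$ direction you compute $(\mcA u,\Phi_i)=\la_iu_i$ and apply Parseval, which gives the expansion and the norm identity at once; the paper instead bounds $\mcA u$ against $\mcA\vp_n$ by Cauchy--Schwarz and recovers equality only from the converse part.
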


\begin{proof}
	 From \eqref{11.14.3}, we obtain
	\begin{equation}\label{06.04.3}
		(\Phi_k,\Phi_l)_{H_0^1(\Om;w)}=\int_\Om (\nabla \Phi_k\cdot\nabla \Phi_l)w\df x=\de_{kl}\la_k \mbox{ for } k,l\in \N,
	\end{equation}
	where $\de_{kl}$  denotes the Kronecker delta function, which is defined as $\de_{kl}=1$ when  $k=l$ and $\de_{kl}=0$ when  $k\neq l$.
	 We now proceed to show that $\{\la_k^{-\f{1}{2}}\Phi_k\}_{k=1}^\iy$ forms an orthonormal basis for $H_0^1(\Om;w)$.
	 From \eqref{06.04.3}, it is readily apparent that  $\{\la_k^{-\f{1}{2}}\Phi_k\}_{k=1}^\iy$ is an orthonormal subset of $H_0^1(\Om;w)$. To establish that it is indeed a basis, we employ a proof by contradiction. Assume that there exists a nonzero function  $0\neq u\in H_0^1(\Om;w)$ such that
	\begin{equation*}
		\int_\Om (\nabla \Phi_k\cdot \nabla u)w\df x=0 \mbox{ for all }k\in\N.
	\end{equation*}
	Since  $\{\Phi_k\}_{k\in\N}$ forms  an orthonormal basis for $L^2(\Om)$, for any $u\in H_0^1(\Om;w)$, we can express $u$ as
	\begin{equation}\label{06.04.4}
		u=\sum_{k=1}^\iy d_k\Phi_k, \mbox{ where }  d_k=(u, \Phi_k)_{L^2(\Om)}, k\in\N.
	\end{equation}
	Then, from \eqref{02.12.2},   we have
	\begin{equation*}
		0=(u,\Phi_k)_{H_0^1(\Om;w)}=\int_\Om (\nabla \Phi_k\cdot \nabla u)w\df x=\la_k\int_\Om \Phi_ku\df x =\la_kd_k,
	\end{equation*}
	which implies that $d_k=0$.  This leads to the conclusion that  $u=0$, which is a contradiction.
	
	From the above discussion, as $u\in H_0^1(\Om;w)$,  we have
	\begin{equation*} u=\sum_{i=1}^\iy e_i\left(\la_i^{-\f{1}{2}} \Phi_k\right), \mbox{ and } \|u\|_{H_0^1(\Om;w)}^2=\sum_{i=1}^\iy e_i^2,
	\end{equation*}
	where
	\begin{equation*}
		e_i=\int_\Om \left(\nabla u\cdot \nabla\left[ \la_i^{-\f{1}{2}}\Phi_i\right]\right) w\df x=\la_i^{-\f{1}{2}}\int_\Om (\nabla u\cdot \nabla \Phi_i)w\df x=\la_i^\f{1}{2}\int_\Om u\Phi_i\df x=\la_i^\f{1}{2}u_i.
	\end{equation*}
	Hence,  $\nabla u=\sum_{i=1}^\iy u_i\nabla\Phi_i$. Moreover, $\|u\|_{H_0^1(\Om;w)}=(\sum_{i=1}^\iy \la_iu_i^2)^\f{1}{2}$.
	
	Let $u\in H^2(\Om;w)$. Define $\vp_n=\sum_{i=1}^n u_i\Phi_i\in H_0^1(\Om;w)\cap H^2(\Om;w)$ for each $n\in\N$. Then,  from
	\begin{equation*}
		\begin{split}
			(\mcA u, \mcA\vp_n)_{L^2(\Om)}
			&=\sum_{i=1}^n u_i\la_i \int_\Om (\mcA u)\Phi_i\df x=\sum_{i=1}^n u_i\la_i\int_\Om w\nabla u\cdot \nabla \Phi_i\df x\\
			&=\sum_{i=1}^n u_i\la_i^2\int_\Om u\Phi_i\df x=\sum_{i=1}^n u_i^2\la_i^2
		\end{split}
	\end{equation*}
	and $\|\mcA\vp_n\|_{L^2(\Om)}^2=\sum_{i=1}^n u_i^2\la_i^2$, by the Cauchy inequality, we obtain
	\begin{equation*}
		\sum_{i=1}^nu_i^2\la_i^2\leq \|\mcA u\|_{L^2(\Om)}^2.
	\end{equation*}
	for all $n\in\N$. This implies that $\sum_{i=1}^\iy u_i^2\la_i^2\leq \|\mcA u\|_{L^2(\Om)}^2<\iy$.

	Let $\sum_{i=1}^\iy u_i^2\la_i^2<\iy$. For each  $\vp\in C_0^\iy(\Om)$,  we have
	\begin{equation*}
		\begin{split}
			(\mcA u, \vp)_{L^2(\Om)}
			&=\int_\Om (\nabla u\cdot \nabla\vp)w\df x=\sum_{i=1}^\iy u_i \int_\Om (\nabla\Phi_i\cdot \nabla\vp)w\df x=\sum_{i=1}^\iy u_i\la_i\int_\Om \Phi_i\vp\df x.
		\end{split}
	\end{equation*}
	Note that
	\begin{equation*}
		\int_\Om \left(\sum_{i=1}^n u_i\la_i\Phi_i\right)^2\df x=\sum_{i=1}^n u_i^2\la_i^2\leq \sum_{i=1}^\iy u_i^2\la_i^2<\iy \mbox{ for all } n\in\N,
	\end{equation*}
	that is,  $\sum_{i=1}^\iy u_i\la_i\Phi_i\in L^2(\Om)$.  Hence,
	\begin{equation*}
		(\mcA u, \vp)_{L^2(\Om)}=\left(\sum_{i=1}^\iy u_i\la_i\Phi_i, \vp\right)_{L^2(\Om)}.
	\end{equation*}
	This implies that $\mcA u=\sum_{i=1}^\iy u_i\la_i\Phi_i$ and  $\|\mcA u\|_{L^2(\Om)}\leq (\sum_{i=1}^\iy u_i^2\la_i^2)^\f{1}{2}$. We complete the proof of this lemma. 
\end{proof}



\subsection{Existence of the weak  solution}

 This section focuses on establishing the existence of weak solutions for equation \eqref{12.14.1}.

	\begin{definition}\label{11.13.D1}
		 A function
		\begin{equation*}
			\vp\in L^2(0,T; H_0^1(\Om;w)), \mbox{ with } \pt_t\vp\in L^2(Q), \ \pt_{tt}\vp\in L^2(0,T; H^{-1}(\Om;w))
		\end{equation*}
		is said to be a weak solution of \eqref{12.14.1} with respect to $(\vp^0,\vp^1,f)$   if it satisfies the following conditions:
		
		(i) For every  $\psi\in H_0^1(\Om;w)$  and almost every  $t\in [0,T]$, the following equation holds:
		\begin{equation}\label{11.18.4}
			\lg \pt_{tt}\vp, \psi\rg_{H^{-1}(\Om;w), H_0^1(\Om;w)}+\int_\Om w\nabla \vp \cdot \nabla \psi \df x=\int_\Om f\psi\df x;
		\end{equation}
		
		(ii) The initial conditions are satisfied as $\vp(0)=\vp^0, \pt_t\vp(0)=\vp^1$.
	\end{definition}

The following lemma is just  \cite[Lemma 2.3, p. 61)]{Bellassoued}.

\begin{lemma}\label{11.30.L2}
	Let $(\mcM,g)$ be a $C^m$-Riemannian manifold with compact boundary $\pt \mcM$. Then,
 there exists a $C^{m-1}$-vector field $\bs{n}$ such that
	\begin{equation*}
		\bs{n}(x)=\nu(x), \ x\in\pt \mcM, \mbox{ and } |\bs{n}(x)|\leq 1, \ x\in \mcM,
	\end{equation*}
	where $\nu$  denotes the unit outward normal vector to $\pt \mcM$.
\end{lemma}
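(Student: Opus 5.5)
The statement is the classical extension of the unit normal field, Lemma \ref{11.30.L2}. I will construct $\bs{n}$ from the distance function to the boundary, localized with a cutoff so that the smoothness region of the distance function is never left.

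The first step is a collar neighborhood. Since $\pt\mcM$ is compact and of class $C^m$, there is a $\delta>0$ such that on the set $U_\delta=\{x\in\mcM\colon d(x)<\delta\}$, where $d(x)=\dist_g(x,\pt\mcM)$, every point has a unique nearest boundary point $\pi(x)$. Moreover, the normal exponential map
\[
E(y,s)=\exp_y(-s\,\nu(y)), \qquad E\colon \pt\mcM\times[0,\delta)\to U_\delta,
\]
is a diffeomorphism. I would obtain this by the inverse function theorem at $s=0$, where the differential is the identity, combined with compactness of $\pt\mcM$ to get a uniform $\delta$ and injectivity.

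On the regularity count: $\nu$ is $C^{m-1}$, and the geodesic flow of a $C^m$ metric is $C^{m-1}$, so $E$ is $C^{m-1}$. In the Euclidean case relevant here ($\Om\subset\R^N$ with $\pt\Om\in C^2$, so $m=2$), the classical result of Gilbarg--Trudinger (Lemma 14.16) gives $d\in C^2(U_\delta)$ directly. I then define $\bs{n}_0(x)=-\nabla_g d(x)$ on $U_\delta$. This is a unit vector field, since $|\nabla_g d|=1$ by the Gauss lemma. It is $C^{m-1}$ because it equals the parallel transport of $\nu(\pi(x))$ along the normal geodesic, and it coincides with $\nu$ on $\pt\mcM$.

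The second step is the cutoff. I choose $\chi\in C^\infty(\R)$ with $0\le\chi\le1$, $\chi(s)=1$ for $s\le\delta/3$, and $\chi(s)=0$ for $s\ge 2\delta/3$. I then set $\bs{n}(x)=\chi(d(x))\,\bs{n}_0(x)$ on $U_\delta$ and $\bs{n}=0$ elsewhere. This field is $C^{m-1}$ on $U_\delta$, because $d$ is at least that smooth there. It vanishes identically near $\{d=\delta\}$, so the zero extension is $C^{m-1}$ on all of $\mcM$. Finally $|\bs{n}|=\chi(d)\,|\bs{n}_0|\le 1$, and $\bs{n}=\nu$ on $\pt\mcM$ because $\chi(0)=1$.

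The main obstacle is the regularity bookkeeping in the first step: showing that $d$ is $C^m$ in the collar, so that $\nabla d$ is $C^{m-1}$ and no derivative is lost. For the Euclidean case I would follow Gilbarg--Trudinger. There one writes $x=y+s\,\nu(y)$ (with the sign adjusted for the inward direction) and computes the Jacobian in principal coordinates as $\prod_i(1-\kappa_i s)$, where $\kappa_i$ are the principal curvatures. This Jacobian is nonzero for $s<1/\max|\kappa|$. One then checks that $\nabla d(x)=-\nu(\pi(x))$ with $\pi\in C^{m-1}$, which gives $d\in C^m$.

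If the general Riemannian version becomes delicate, the fallback is to accept a $C^{m-1}$ field only. That only needs $\bs{n}_0=(\nu\circ\pi)$ transported along the normal geodesics, which is visibly $C^{m-1}$ because $E^{-1}$ is $C^{m-1}$. In that case I would use a smooth cutoff in the $C^{m-1}$ coordinate $s$ instead of in $d$.
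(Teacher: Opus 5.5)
Your construction is correct. Note that the paper gives no proof of Lemma~\ref{11.30.L2}: it quotes it from the cited work of Bellassoued (Lemma~2.3 there) and uses it as a black box. Your argument is therefore a self-contained replacement rather than a reproduction of the paper's proof. It uses the normal exponential map $E(y,s)=\exp_y(-s\nu(y))$ on a collar $\pt\mcM\times[0,\delta)$, sets $\bs{n}_0=-\pt_sE\circ E^{-1}$, and applies a cutoff in the collar variable.

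Two remarks on the write-up. First, the step you single out as the main obstacle, $d\in C^m$, is not needed. The field $\bs{n}_0$ is $C^{m-1}$ because $\pt_sE$ is the velocity component of the $C^{m-1}$ geodesic flow and $E^{-1}$ is $C^{m-1}$. The cutoff $\chi\circ(s\circ E^{-1})$ is then $C^{m-1}$ as well. So your ``fallback'' is really the clean proof, and it also spares you showing that $d=s\circ E^{-1}$ on the collar. The zero extension is still $C^{m-1}$ because $E(\pt\mcM\times[0,2\delta/3])$ is compact, hence closed. Second, the inverse function theorem step requires $m\ge 2$. For $m=1$, a continuous extension of $\nu$ truncated to $g$-norm at most $1$ does the job.

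As for what each route buys: the citation is economical and covers general $C^m$ metrics. Your construction directly delivers the extra properties the paper actually uses in Step~7 of the proof of Theorem~\ref{11.14.T1}:
\begin{itemize}
\item support in a thin collar of $\pt\Om$;
\item $|\bs{n}|=1$ near $\pt\Om$;
\item bounds on $D\bs{n}$ and $\Div\bs{n}$ in terms of $1/\delta$ and the curvature of $\pt\Om$.
\end{itemize}
It also makes plain that for $\pt\Om\in C^2$ one obtains a $C^1$ field, not the ``smooth'' one asserted in Step~7. A $C^1$ field is nonetheless all the integration by parts there requires.
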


We are now in a position to  demonstrate the existence of a weak solution to equation \eqref{12.14.1}.
	
\begin{theorem}\label{11.14.T1}
	Given  $\vp^0\in H_0^1(\Om;w), \vp^1\in L^2(\Om)$ and $f\in L^2(Q)$, there exists a unique weak solution
$\vp$ characterized by
	\begin{equation}\label{11.14.4}
		\vp\in L^2(0,T; H_0^1(\Om;w))\cap H^1(0,T; L^2(\Om))\cap H^2(0,T; H^{-1}(\Om;w))
	\end{equation}
	for the equation \eqref{12.14.1}.   Moreover,
	\begin{equation}\label{12.01.1}
		\f{\pt \vp}{\pt \nu}\in L^2(\pt Q).
	\end{equation}
	 Additionally, the following estimate holds:
	\begin{equation}\label{11.18.1}
		\begin{split}
		&\esssup_{t\in [0,T]}\left(\|\vp(t)\|_{H_0^1(\Om;w)}+\|\pt_t\vp(t)\|_{L^2(\Om)}\right)+\|\pt_{tt}\vp\|_{L^2(0,T; H^{-1}(\Om;w))}+\left\|\f{\pt \vp}{\pt \nu}\right\|_{L^2(\pt Q)}\\
		&\leq C\left(\|\vp^0\|_{H_0^1(\Om;w)}+\|\vp^1\|_{L^2(\Om)}+\|f\|_{L^2(Q)}\right),
		\end{split}
	\end{equation}
	where the constant $C>0$ is only dependent on $\al, T$ and $M$.
	
	Furthermore, if $f\in H^1(0,T; L^2(\Om)), \vp^0\in D(\mcA), \vp^1\in H_0^1(\Om;w)$, then
	\begin{equation}\label{11.14.6}
		\vp\in L^2(0,T; D(\mcA))\cap H^1(0,T; H_0^1(\Om;w))\cap H^2(0,T; L^2(\Om)).
	\end{equation}
	 Additionally, the following estimate is valid:
	\begin{equation}\label{11.18.2}
		\begin{split}
			&\esssup_{t\in [0,T]}\left(\|\vp(t)\|_{D(\mcA)}+\|\vp(t)\|_{H^2(\mcO(\pt \Om;2R_0))}+\|\pt_t\vp(t)\|_{H_0^1(\Om;w)}\right)+\left\|\f{\pt \vp}{\pt \nu}\right\|_{H^1(0,T; \pt\Om)}\\
			&\leq C\left(\|\vp^0\|_{D(\mcA)}+\|\vp^1\|_{H_0^1(\Om;w)}+\|f\|_{H^1(0,T; L^2(\Om))}\right),
		\end{split}
	\end{equation}
	where $\mcO(\pt\Om;2R_0)=\{x\in\Om\colon \dist(x,\pt \Om)<2R_0\}$,  and the constant $C>0$ depends only on $\al, R_0, T$ and $\mcO(\pt\Om;3R_0)$.
\end{theorem}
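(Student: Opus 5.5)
Expand the solution in the eigenbasis of Notation \ref{02.04.N1}. Write $\vp^0=\sum a_n\Phi_n$, $\vp^1=\sum b_n\Phi_n$, $f(t)=\sum f_n(t)\Phi_n$, and look for $\vp(t)=\sum c_n(t)\Phi_n$. Each coefficient solves $c_n''+\la_nc_n=f_n$ with $c_n(0)=a_n$, $c_n'(0)=b_n$, which gives the Duhamel formula
\[
c_n(t)=a_n\cos(\sqrt{\la_n}t)+\f{b_n}{\sqrt{\la_n}}\sin(\sqrt{\la_n}t)+\int_0^t \f{\sin(\sqrt{\la_n}(t-s))}{\sqrt{\la_n}}f_n(s)\df s .
\]
From this, $\la_n|c_n(t)|^2+|c_n'(t)|^2\le C(\la_na_n^2+b_n^2+T\|f_n\|_{L^2(0,T)}^2)$, since $\la_n\ge\la_1>0$ by \eqref{11.30.9}. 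Summing over $n$ and using Lemma \ref{06.28.L1} gives the energy part of \eqref{11.18.1}. For $\pt_{tt}\vp=f-\mcA\vp$, the $H^{-1}(\Om;w)$ norm is bounded by duality, because $\|\mcA u\|_{H^{-1}(\Om;w)}=\|u\|_{H_0^1(\Om;w)}$. Partial sums converge in the stated spaces, and passing to the limit in the finite-dimensional identities gives \eqref{11.18.4} and the initial conditions. For uniqueness, take the difference of two solutions and test \eqref{11.18.4} against the spectral projections. The coefficients then solve homogeneous ODEs with zero data, so they vanish.

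For the higher regularity \eqref{11.14.6}, apply the same computation to $\pt_t\vp$, which formally solves the equation with data $(\vp^1,f(0)-\mcA\vp^0,\pt_tf)$. Equivalently, multiply the coefficient estimate by $\la_n$. This gives $\sum\la_n^2c_n^2+\la_n(c_n')^2$ bounded by $\|\vp^0\|_{D(\mcA)}^2+\|\vp^1\|^2_{H_0^1(\Om;w)}+\|f\|^2_{H^1(0,T;L^2)}$. The bound uses integration by parts in $s$ in the Duhamel term, which is why $f\in H^1(0,T;L^2(\Om))$ is needed. Then $\mcA\vp=f-\pt_{tt}\vp\in L^2$ gives the $D(\mcA)$ bound.

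Near $\pt\Om$ we have $w=|x|^\al\ge (6R_0)^\al$ on $\mcO(\pt\Om;3R_0)$, because $B(0,8R_0)\s\Om$. There $w$ is smooth, so the equation is uniformly elliptic in $x$ for a.e.\ $t$. Take a cutoff $\chi$ equal to $1$ on $\mcO(\pt\Om;2R_0)$ and supported in $\mcO(\pt\Om;3R_0)$. Then $\chi\vp$ solves $-\Div(w\nabla(\chi\vp))=\chi(f-\pt_{tt}\vp)+\text{(lower order terms in }\vp)$ with zero boundary data. Standard $H^2$ regularity up to the $C^2$ boundary gives the $H^2(\mcO(\pt\Om;2R_0))$ bound in \eqref{11.18.2}. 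Doing the same with $\pt_t\vp$ in $H^1$ and the trace theorem gives $\f{\pt\vp}{\pt\nu}\in H^1(0,T;L^2(\pt\Om))$.

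The main obstacle is the hidden regularity \eqref{12.01.1} at the weak level, where $\vp$ is not in $H^2$ near the boundary. Use the multiplier $\chi\,\bs n\cdot\nabla\vp$, with $\bs n$ the vector field of Lemma \ref{11.30.L2} and $\chi$ the cutoff above. First work with smooth data, using the density given by Lemma \ref{12.01.L1}, so that the regular solution is $H^2$ near $\pt\Om$ and the integrations by parts are justified. Since $w$ is smooth and bounded below on the support of $\chi$, the standard Rellich-type identity gives
\[
\int_{\pt Q}w\Big|\f{\pt\vp}{\pt\nu}\Big|^2\df\sigma\df t\le C\big(\text{energy}+\|f\|_{L^2(Q)}^2\big).
\]
The right side is controlled by the first part of \eqref{11.18.1}. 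By density and linearity, \eqref{12.01.1} and the full estimate \eqref{11.18.1} extend to general data.
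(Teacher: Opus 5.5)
Your overall route is the paper's: an eigenfunction expansion, cut-off elliptic regularity on $\mcO(\pt\Om;3R_0)$ where $w$ is smooth and bounded below, and the multiplier $\bs n\cdot\nabla\vp$ for hidden regularity. Two variations are fine and slightly cleaner than the paper: you solve the coefficient ODEs by Duhamel instead of Galerkin plus Gronwall, and you prove uniqueness by testing against $\Phi_n$. One step, however, fails as written. You claim that ``doing the same with $\pt_t\vp$ in $H^1$ and the trace theorem'' gives $\f{\pt\vp}{\pt\nu}\in H^1(0,T;L^2(\pt\Om))$. Near $\pt\Om$ you only know $\pt_t\vp\in L^\infty(0,T;H^1)$. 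The elliptic bootstrap for $\chi\pt_t\vp$ has a right-hand side containing $\pt_{ttt}\vp$, which lies only in $L^2(0,T;H^{-1}(\Om;w))$, so it gives nothing beyond $H^1$. The trace of an $H^1$ function is only its Dirichlet trace (here zero), not a normal derivative. Interpolating between $L^2(0,T;H^2)$ and $H^1(0,T;H^1)$ near $\pt\Om$ also cannot produce a full time derivative of $\pt_\nu\vp$ in $L^2(\pt Q)$.

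The missing idea is to use hidden regularity a second time. The function $z=\pt_t\vp$ is the weak solution of the same equation with data $(\vp^1,\ f(0)-\mcA\vp^0,\ \pt_tf)\in H_0^1(\Om;w)\times L^2(\Om)\times L^2(Q)$. This is immediate in your spectral setting, since $c_n'$ solves $y''+\la_ny=f_n'$ with $y(0)=b_n$ and $y'(0)=f_n(0)-\la_na_n$. Apply your weak-level bound \eqref{12.01.1}--\eqref{11.18.1} to $z$ and use $\pt_t\f{\pt\vp}{\pt\nu}=\f{\pt(\pt_t\vp)}{\pt\nu}$; this is exactly the paper's Step 8. It means you must prove the weak-level hidden regularity before this claim. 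That ordering is not circular, because your Rellich argument only uses the $H^2(\mcO(\pt\Om;2R_0))$ bound for regular data. Two cosmetic corrections: on $\mcO(\pt\Om;3R_0)$ one only has $|x|>5R_0$, not $6R_0$. Also, the support of the multiplier $\chi\bs n$ should lie where you have actually established $H^2$ regularity, for instance inside $\ol{\mcO(\pt\Om;2R_0)}$ as in the paper.
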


\begin{proof}
	 The proof of this theorem proceeds through the following nine steps.

	{\it Step 1: Galerkin Method.}  Let $m\in\N$. 
	\begin{equation}\label{11.30.4}
		y^m(x, t)=\sum_{n=1}^m d_n^m(t)\Phi_n(x).
	\end{equation}
	Consider the following ordinary differential equations
	\begin{equation}\label{11.30.5}
		(d_n^m)''+\la_nd_n^m=f_n^m, \ n=1,\cdots, m
	\end{equation}
	with initial conditions
  \begin{equation}\label{11.30.3}
		d_n^m(0)=(\vp^0, \Phi_n)_{L^2(\Om)}, \quad (d_n^m)'(0)=(\vp^1, \Phi_n)_{L^2(\Om)}, \ n=1,\cdots, m,
	\end{equation}
	where
	\begin{equation}\label{02.04.1}
		f^m=\sum_{n=1}^m f_n^m\Phi_n, \quad f_n^m=(f,\Phi_n)_{L^2(\Om)} \mbox{ for } n=1,\cdots, m.
	\end{equation}
	By the standard theory for ordinary differential equations, there exists a unique function $d^m(t)=(d_1^m(t),\cdots, d_m^m(t))$, satisfying \eqref{11.30.5} and \eqref{11.30.3} for $0\leq t\leq T$. It is clear that \eqref{11.30.5} is equivalent to
	\begin{equation}\label{02.04.2}
		\left(\pt_{tt} y^m, \Phi_n\right)_{L^2(\Om)}+(y^m, \Phi_n)_{H_0^1(\Om;w)}=(f^m,\Phi_n)_{L^2(\Om)}, \ n=1,\cdots, m
	\end{equation}
	by \eqref{06.04.3} and \eqref{11.30.4}.
	
{\it Step 2: Energy Estimate.}

 Multiply equation \eqref{11.30.5} by  $(d_n^m)'(t)$ and sum over $n=1,\cdots, m$, we arrive at
\begin{equation}\label{11.30.6}
	\begin{split}
		\int_\Omega (\partial_{tt}y^m)(\partial_ty^m)\,dx+\int_\Omega w\nabla y^m\cdot \nabla \partial_ty^m\,dx=\int_\Omega f^m \partial_ty^m\,dx.
	\end{split}
\end{equation}
 It is worth noting that 
\begin{equation*}
	\int_\Omega (\partial_{tt} y^m)(\partial_ty^m)\,dx=\frac{1}{2}\frac{d}{dt}\|\partial_ty^m\|_{L^2(\Omega)}^2, \quad \int_\Omega w\nabla y^m\cdot \nabla \partial_t y^m\,dx=\frac{1}{2}\frac{d}{dt}\|y^m\|_{H_0^1(\Omega;w)}^2. 
\end{equation*}
 Combining these results with \eqref{11.30.6}, we obtain 
\begin{equation*}
	\frac{d}{dt}\left(\|\partial_ty^m\|_{L^2(\Omega)}^2+\|y^m\|_{H_0^1(\Omega;w)}^2\right)\leq \|f^m\|_{L^2(\Omega)}^2+\|\partial_ty^m\|_{L^2(\Omega)}^2.
\end{equation*}
Consequently, by employing Gronwall's inequality \cite[Appendix B.2 (j), p. 708]{Evans} along with \eqref{11.30.3}, \eqref{02.04.1}, and Lemma \ref{06.28.L1}, we have for almost every $t\in [0,T]$  
 that
\begin{equation}\label{11.30.7}
	\begin{split}
	\|\partial_ty^m\|_{L^2(\Omega)}^2+\|y^m\|_{H_0^1(\Omega;w)}^2
	&\leq e^t\left(\|\partial_ty^m(0)\|_{L^2(\Omega)}^2+\|y^m(0)\|_{H_0^1(\Omega;w)}^2+\|f^m\|_{L^2(Q)}^2\right)\\
	&\leq e^T\left(\|\varphi^1\|_{L^2(\Omega)}^2+\|\varphi^0\|_{H_0^1(\Omega;w)}^2+\|f\|_{L^2(Q)}^2\right).
	\end{split}
\end{equation}

  Furthermore, for any $v\in H_0^1(\Omega;w)$ satisfying  $\|v\|_{H_0^1(\Omega;w)}\leq 1$, 
   we can decompose $v$ as $v=v^1+v^2$, where $v^1\in \Span \{\Phi_k \}_{k=1}^m$ and $(v^2,\Phi_k)_{L^2(\Omega)}=0$ for $k=1,\cdots, m$. Based on \eqref{11.30.4} and \eqref{11.30.5}, we get
\begin{equation*}
	\begin{split}
		\langle \partial_{tt}y^m, v\rangle_{H^{-1}(\Omega;w),H_0^1(\Omega;w)}&=(\partial_{tt}y^m, v)_{L^2(\Omega)}=(\partial_{tt}y^m, v^1)_{L^2(\Omega)}\\&=(f^m, v^1)_{L^2(\Omega)}-(y^m,v^1)_{H_0^1(\Omega;w)}.
	\end{split}
\end{equation*}
Thus, by invoking Remark \ref{08.16.R1} and the fact that
$\|v^1\|_{H_0^1(\Omega;w)}\leq 1$,  we have for almost every $t\in [0,T]$ that 
\begin{equation*}
	\|\partial_{tt}y^m(t)\|_{H^{-1}(\Omega;w)}\leq C\|f^m(t)\|_{L^2(\Omega)}+\|y^m(t)\|_{H_0^1(\Omega;w)},
\end{equation*}
where the constant $C>0$ depends only on $\alpha$ and $M$.   As a result, according to \eqref{11.30.7}, we obtain 
\begin{equation}\label{11.30.8}
	\int_0^T\|\partial_{tt}y^m\|_{H^{-1}(\Omega;w)}^2\,dt\leq C\left(\|\varphi^0\|_{H_0^1(\Omega;w)}^2+\|\varphi^1\|_{L^2(\Omega)}^2+\|f\|_{L^2(Q)}^2\right),
\end{equation}
 where the constant $C>0$ depends only on $\alpha, T$, and $M$.

{\it Step 3: Approximation or weak convergence.}

 Based on \eqref{11.30.7} and \eqref{11.30.8}, we can extract a subsequence from
$\{y^m\}_{m\in\mathbb{N}}$,  which we shall continue to denote by the same symbol, along with a function
 $y\in L^2(0,T; H_0^1(\Omega;w))$  
 satisfying the following weak convergence properties:
\begin{equation}\label{02.04.3}
	\begin{split}
		y^m
		&\rightharpoonup  y  \; \text{ weak}^* \hbox{  in } L^\infty (0,T; H_0^1(\Omega;w)),\\
		\partial_ty^m
		&\rightharpoonup \partial_t y  \; \text{ weak}^* \hbox{  in } L^\infty (0,T; L^2(\Omega)),\\
		\partial_{tt} y^m
		&\rightharpoonup \partial_{tt}y  \quad \text{weakly in } L^2(0,T; H^{-1}(\Omega;w)).
		\end{split}
\end{equation}
 Furthermore, leveraging \eqref{11.30.7}, we derive the estimate 
\begin{equation}\label{12.01.2}
	\begin{split}
		&\esssup_{t\in [0,T]}\left(\|y(t)\|_{H_0^1(\Omega;w)}+\|\partial_ty(t)\|_{L^2(\Omega)}\right)+\|\partial_{tt}y\|_{L^2(0,T; H^{-1}(\Omega;w))}\\
		&\leq C\left(\|\varphi^0\|_{H_0^1(\Omega;w)}+\|\varphi^1\|_{L^2(\Omega)}+\|f\|_{L^2(Q)}\right),
	\end{split}
\end{equation}
where the constant $C>0$ depends only on $\alpha, T$, and $M$.   Additionally, we observe that 
\begin{equation}\label{02.04.4}
	y\in C([0,T]; L^2(\Omega)) \hbox{ and }  \partial_ty\in C([0,T]; H^{-1}(\Omega;w)).
\end{equation}

{\it Step 4: The function $y$ is a weak solution of \eqref{12.14.1}.}

 We proceed to demonstrate that
$y$
 fulfills Definition \ref{11.13.D1} (i).

Fix an arbitrary integer
$k\in\mathbb{N}$, and select a function  $\psi\in C^2([0,T]; H_0^1(\Omega;w))$
 that takes the form
 \begin{equation}\label{11.30.10}
	\psi(t)=\sum_{n=1}^k \psi_n(t)\Phi_n.
\end{equation}
 By choosing
$m\geq k$, multiplying \eqref{11.30.5} by $\psi_n(t)$ for $n=1,\cdots, k$,
 and then integrating with respect to  $t\in [0,T]$, we arrive at
\begin{equation}\label{02.04.5}
	\int_0^T \left(\langle \partial_{tt}y^m, \psi\rangle_{H^{-1}(\Omega;w), H_0^1(\Omega;w)}+(y^m, \psi)_{H_0^1(\Omega;w)} \right)\,dt=\int_0^T (f^m, \psi)_{L^2(\Omega)}\,dt.
\end{equation}
 Based on \eqref{02.04.3}, taking the limit as
$m\to\infty$, we obtain
\begin{equation}\label{11.30.12}
	\begin{split}
		\int_0^T\left(\langle \partial_{tt}y, \psi\rangle_{H^{-1}(\Omega;w), H_0^1(\Omega;w)}+(y, \psi)_{H_0^1(\Omega;w)}\right)\,dt=\int_0^T (f,\psi)_{L^2(\Omega)}\,dt.
	\end{split}
\end{equation}
 It is important to note that the set of functions
$\psi(t)$ 
 of the form \eqref{11.30.10} is dense in $L^2(0,T; H_0^1(\Omega;w))$. Consequently, \eqref{11.30.12} holds for all
$\psi\in L^2(0,T; H_0^1(\Omega;w))$. Combining this with \eqref{11.30.12}, we derive
\begin{equation*}
	\langle \partial_{tt}y, v\rangle_{H^{-1}(\Omega;w),H_0^1(\Omega;w)}+(y,v)_{H_0^1(\Omega;w)}=(f,v)_{L^2(\Omega)}
\end{equation*}
for all $v\in H_0^1(\Omega;w)$ and almost every $t\in [0,T]$.  This establishes the validity of Definition \ref{11.13.D1} (ii).

Next, we proceed to verify that $y(0)=\varphi^0$ and $\partial_ty(0)=\varphi^1$.

For any $h\in C^2([0,T]; H_0^1(\Omega;w))$ satisfying  $h(T)=\partial_th(T)=0$,   from \eqref{11.30.12} and \eqref{02.04.4}, we have
\begin{equation}\label{01.13.15}
	\begin{split}
		&\iint_Qy \partial_{tt}h\,dx\,dt+\iint_Q w\nabla y\cdot \nabla h\,dx\,dt\\
		&=\iint_Q fh\,dx\,dt-(y(0),\partial_th(0))_{L^2(\Omega)}+\langle \partial_ty(0),h(0)\rangle_{H^{-1}(\Omega;w),H_0^1(\Omega;w)}.
	\end{split}
\end{equation}
Similarly, from \eqref{02.04.5} we obtain 
\begin{equation*}
	\begin{split}
		&\iint_Qy^k \partial_{tt}h\,dx\,dt+\iint_Q w\nabla y^k\cdot \nabla h\,dx\,dt\\
		&=\iint_Q f^kh\,dx\,dt-(y^k(0),\partial_th(0))_{L^2(\Omega)}+(\partial_ty^k(0), h(0))_{L^2(\Omega)},
	\end{split}
\end{equation*}
 which, when combined with \eqref{11.30.3}, \eqref{02.04.1}, and \eqref{02.04.3}, leads to
\begin{equation*}
	\begin{split}
		&\iint_Qy \partial_{tt}h\,dx\,dt+\iint_Q w\nabla y\cdot \nabla h\,dx\,dt\\
		&=\iint_Q fh\,dx\,dt-(\varphi^0,\partial_th(0))_{L^2(\Omega)}+(\varphi^1,h(0))_{L^2(\Omega)}.
	\end{split}
\end{equation*}
This, together with \eqref{01.13.15}, implies
\begin{equation*}
	y(0)=\varphi^0, \quad \text{and} \quad \partial_ty(0)=\varphi^1.
\end{equation*}
 This completes the proof of Definition \ref{11.13.D1} (ii).

{\it Step 5: Uniqueness.}

 Assume that $\varphi^0=\varphi^1=f=0$.  Let
$y$
 denote the solution of \eqref{12.14.1} corresponding to the parameter values
$(\varphi^0,\varphi^1,f)=(0,0,0)$. For a fixed $s\in [0,T]$, define the function
\begin{equation*}
	h(t)=
	\begin{cases}
		\int_t^sy(\tau)\,d\tau, & t\in [0,s],\\
		0, &t\in [s,T].
	\end{cases}
\end{equation*}
It follows that  $h(t)\in H_0^1(\Omega;w)$ for almost every $t\in [0,T]$,  and the following equality holds:
\begin{equation*}
	\int_0^s\langle\partial_{tt}y, h\rangle_{H^{-1}(\Omega;w),H_0^1(\Omega;w)}\,dt+\int_0^sw\nabla y \cdot \nabla h\,dt=0.
\end{equation*}
 Observe that  $\partial_ty(0)=h(s)=0$.  
 By integrating by parts with respect to
$t$
 and utilizing the fact that $\partial_th=-y$ for $0\leq t\leq s$, we obtain
\begin{equation*}
	\int_0^s(\partial_ty,y)_{L^2(\Omega)}\,dt-\int_0^s(\partial_th,h)_{H_0^1(\Omega;w)}\,dt=0.
\end{equation*}
 This can be rewritten as 
\begin{equation*}
	\frac{1}{2}\int_0^s\frac{d}{dt}\|y(t)\|_{L^2(\Omega)}^2\,dt-\frac{1}{2}\int_0^s\frac{d}{dt}\|h\|_{H_0^1(\Omega;w)}^2\,dt=0.
\end{equation*}
Given that  $y(0)=0$ and $h(s)=0$, we deduce that 
\begin{equation*}
	\|y(s)\|_{L^2(\Omega)}^2+\|h(0)\|_{H_0^1(\Omega;w)}^2=\|y(0)\|_{L^2(\Omega)}^2+\|h(s)\|_{H_0^1(\Omega;w)}^2=0.
\end{equation*}
 Consequently, $y(s)=0$. Since $s\in [0,T]$ is arbitrary, it follows that $y=0$. This establishes the uniqueness of the solution.

{\it Step 6:  Improved regularity.}

  We further assume that  $\pt_tf\in L^2(Q)$, that is, $f\in H^1(0,T; L^2(\Om))$. Let $\ol y^m=\pt_ty^m$. 
  From \eqref{11.30.4}, \eqref{11.30.5} and \eqref{02.04.2}, we obtain 
\begin{equation*}
	(\pt_{tt}\ol y^m, \Phi_k)_{L^2(\Om)}+(\ol y^m, \Phi_k)_{H_0^1(\Om;w)}=(\pt_t f^m, \Phi_k)_{L^2(\Om)} \mbox{ for } 0\leq t\leq T \mbox{ and } k=1,\cdots, m.
\end{equation*}
By multiplying with  $(d_k^m)''(t)$,  and summing over  $k=1,\cdots, m$, we  have
\begin{equation*}
	(\pt_{tt}\ol y^m, \pt_t \ol y^m)_{L^2(\Om)}+(\ol y^m, \pt_t \ol y^m)_{H_0^1(\Om;w)}=(\pt_t f^m, \pt_t\ol y^m)_{L^2(\Om)}.
\end{equation*}
  Following the same reasoning as in \eqref{11.30.7}, we derive 
\begin{equation}\label{09.07.13}
	\begin{split}
		&\esssup_{0\leq t\leq T}\left(\|\pt_t\ol y^m\|_{L^2(\Om)}+\|\ol y^m\|_{H_0^1(\Om;w)}\right)\\
		&\leq  \|\ol y^m(0)\|_{H_0^1(\Om;w)}+\|\pt_t\ol y^m(0)\|_{L^2(\Om)}+\|\pt_tf^m\|_{L^2(Q)}.
	\end{split}
\end{equation}
Note that
\begin{equation*}
	\pt_t\ol y^m(0)=(\pt_{tt}y^m)(0)=\sum_{k=1}^m (d_k^m)''(0)\Phi_k=\sum_{k=1}^m f_k^m(0)\Phi_k-\sum_{k=1}^m\la_kd_k^m(0)\Phi_k;
\end{equation*}
and
\begin{equation*}
	\int_\Om \left(\sum_{k=1}^m f_k^m(0)\Phi_k\right)^2\df x\leq \|f(0)\|_{L^2(\Om)}^2\leq C\left(\|f\|_{L^2(Q)}^2+\|\pt_tf\|_{L^2(Q)}^2\right) 
\end{equation*}
by   \cite[Theorem 2 (iii) in Chapter 5.9.2, p. 302]{Evans} and $f\in H^1(0,T; L^2(\Om))$;   and
\begin{equation*}
	\int_\Om \left(\sum_{k=1}^m\la_kd_k^m(0)\Phi_k\right)^2\df x=\sum_{k=1}^m \la_k^2(\vp^0,\Phi_k)_{L^2(\Om)}^2\leq \|\mcA\vp^0\|_{L^2(\Om)}^2
\end{equation*}
by \eqref{11.30.3} and Lemma \ref{06.28.L1}.  We obtain
\begin{equation}\label{02.04.6}
	\|\pt_t\ol y^m(0)\|_{L^2(\Om)}\leq C\left(\|\vp^0\|_{D(\mcA)}+\|f\|_{H^1(0,T; L^2(\Om))}\right),
\end{equation}
where the constants $C>0$ depending only on $T$. Now, we have
\begin{equation}\label{02.04.7}
	\begin{split}
		\|\ol y^m(0)\|_{H_0^1(\Om;w)}^2
		&=\|\pt_ty^m(0)\|_{H_0^1(\Om;w)}^2 \\
		&=\left(\sum_{n=1}^m (\vp^1,\Phi_n)_{L^2(\Om)}\Phi_n, \sum_{n=1}^m(\vp^1,\Phi_n)_{L^2(\Om)}\Phi_n\right)_{H_0^1(\Om;w)}\\
		&\leq \sum_{n=1}^m (\vp^1,\Phi_n)_{L^2(\Om)}^2\la_n\leq \|\vp^1\|_{H_0^1(\Om;w)}^2,
	\end{split}
\end{equation}
by \eqref{11.30.4}, \eqref{11.30.3}, and Lemma \ref{06.28.L1}. Combining \eqref{09.07.13}, \eqref{02.04.6}, and \eqref{02.04.7}, we obtain 
\begin{equation}\label{09.04.9}
	\begin{split}
		&\esssup_{0\leq t\leq T}\left(\|\pt_t\ol y^m\|_{L^2(\Om)}+\|\ol y^m\|_{H_0^1(\Om;w)}\right)\\
		&\leq C\left(\|\mcA \vp^0\|_{L^2(\Om)}+\|\vp^1\|_{H_0^1(\Om;w)}+\|f\|_{H^1(0,T; L^2(\Om))}\right)
	\end{split}
\end{equation}
 for almost every  $t\in [0,T]$, where the constants $C>0$ depends only on $T$.

Now, multiplying \eqref{02.04.2} by $\la_kd_k^m$, summing over $k=1,\cdots, m$, since $\mcA y^m=\sum_{n=1}^m y_n^m\la_n\Phi_n\in H_0^1(\Om;w)$, we get
\begin{equation}\label{09.07.14}
	\|\mcA y^m\|_{L^2(\Om)}^2=(y^m, \mcA y^m)_{H_0^1(\Om;w)}=(f^m-\pt_{tt}y^m, \mcA y^m)_{L^2(\Om)}.
\end{equation}
Choose  $\zeta\in C^\iy(\ol\R^N), 0\leq \zeta\leq 1$ such that
\begin{equation*}
	\begin{split}
		\zeta=1 \mbox{ on }\mcO(\pt\Om;2R_0),\mbox{ and } \zeta=0 \mbox{ on } \Om-\mcO(\pt\Om;3R_0),
	\end{split}
\end{equation*}
and
\begin{equation*}
	 |\nabla\zeta|\leq CR_0^{-1},  \mbox{ and } \left|\f{\pt^2\zeta}{\pt x_i\pt x_j}\right|\leq CR_0^{-2}, \mbox{ for all } i,j=1,\cdots, N,
\end{equation*}
where the constants $C>0$ are absolute. Then, $\xi=\zeta y^m$ is the solution of the following    equation
\begin{equation}\label{02.04.8}
	\mcA \xi=g \mbox{ in }\mcO(\pt\Om;3R_0),\quad \xi=0 \mbox{ in } \pt\mcO(\pt\Om;3R_0),
\end{equation}
where
\begin{equation*}
	g=-\zeta\pt_{tt}y^m+\zeta f^m-2w\nabla \zeta\cdot \nabla y^m-y^m\Div(w\nabla\zeta).
\end{equation*}
 It is evident that \eqref{02.04.8} is a uniformly elliptic equation with $g\in L^2(\Om)$. 
 By the classical elliptic regularity theory \cite[Theorem 1, Chapter 6.3.1, p.327 and Theorem 4, Chapter 6.3.2, p. 334]{Evans}, we have
\begin{equation*}
	\|\xi\|_{H^2(\mcO(\pt\Om;2R_0))}\leq C\left(\|g\|_{L^2(\Om)}+\|\xi\|_{L^2(\Om)}\right),
\end{equation*}
where   the constant $C>0$ depends only on $\al,R_0$ and $\mcO(\pt\Om;3R_0)$.
 Applying this to \eqref{09.07.14}, we obtain
\begin{equation}\label{02.13.3}
	\|y^m\|_{H^2(\mcO(\pt\Om;2R_0))}+\|\mcA y^m\|_{L^2(\Om)}\leq C\left(\|f^m\|_{L^2(\Om)}+\|\pt_{tt}y^m\|_{L^2(\Om)}+\|y^m\|_{H_0^1(\Om;w)}\right),
\end{equation}
where the constant $C>0$ depends only on $\al, R_0, T$ and $\mcO(\pt\Om;3R_0)$.

Recalling $\ol y^m=\pt_ty^m$,  from \eqref{11.30.7} and \eqref{09.04.9}, we obtain
\begin{equation*}
	\begin{split}
		&\esssup_{0\leq t\leq T}\left(\|y^m(t)\|_{H^2(\mcO(\pt\Om;2R_0))}+\|y^m(t)\|_{D(\mcA)}+\|\pt_ty^m(t)\|_{H_0^1(\Om;w)}+\|\pt_{tt}y^m(t)\|_{L^2(\Om)}\right)\\
		&\leq C\left(\|\vp^0\|_{D(\mcA)}+\|\vp^1\|_{H_0^1(\Om;w)}+\|f\|_{H^1(0,T; L^2(\Om))}\right).
	\end{split}
\end{equation*}
Letting $m\ra\iy$, we obtain 
\begin{equation}\label{09.07.16}
	\begin{split}
		&\esssup_{0\leq t\leq T}\left(\|\vp(t)\|_{H^2(\mcO(\pt\Om;2R_0))}+\|\vp(t)\|_{D(\mcA)}+\|\pt_t\vp(t)\|_{H_0^1(\Om;w)}+\|\pt_{tt}\vp(t)\|_{L^2(\Om)}\right)\\
		&\leq C\left(\|\vp^0\|_{D(\mcA)}+\|\vp^1\|_{H_0^1(\Om;w)}+\|f\|_{H^1(0,T; L^2(\Om))}\right),
	\end{split}
\end{equation}
where the constant $C>0$ depends only $\al, R_0, T$ and $\mcO(\pt\Om;3R_0)$.

 {\it Step 7: Hidden regularity.}

By invoking a density argument, it is sufficient to establish \eqref{12.01.1} for the cases where
$\vp^0\in D(\mcA),\vp^1\in H_0^1(\Om;w),f\in H^1(0,T; L^2(\Om))$.   According to Lemma \ref{11.30.L2}, there exists a smooth vector field
$\bs{n}$  defined on  $\Om$  satisfying
	\begin{equation*}
		\bs{n}(x)=\nu(x) \mbox{ for } x\in\pt\Om,  \mbox{ and } \supp \bs{n}\s \ol{\mcO(\pt\Om;2R_0)},
	\end{equation*}
	as well as 
	\begin{equation*}
		|\bs{n}(x)|\leq 1 \mbox{ for } x\in \Om,\mbox{ and } |\Div \bs{n}(x)|\leq CR_0^{-1} \mbox{ for } x\in \Om,
	\end{equation*}
where the positive constant $C$ depends solely on $\bs{n}$  (and thus, by Lemma \ref{11.30.L2}, only on 
$\mcO(\pt\Om;2R_0 $.

If the above conditions are not met, we can multiply
$\bs{n}$  
 by a cut-off function $\zeta\in C_0^\iy(\R^N)$ such that  $\zeta=1$ on $\ol{\mcO(\pt\Om;R_0)}$, 
$\zeta=0$ on $\Om\se \ol{\mcO(\pt\Om;2R_0)}$, and $|\nabla\zeta|\leq CR_0^{-1}$. Consequently,
$\zeta\bs{n}$  will be the desired function.

From {\it Step 6}, we know that  $\pt_{tt}\vp, \mcA\vp\in L^2(Q)$. By multiplying both sides of \eqref{12.14.1} by
$\bs{n}\cdot \nabla \vp$   and integrating over  $Q$ (the integrations are indeed on $\mcO(\pt\Om;2R_0)\ts (0,T)$ since $\bs{n}=0$ on $(\Om\se \ol{\mcO(\pt\Om;2R_0)})\ts (0,T)$), we obtain
	\begin{equation}\label{02.13.1}
		\begin{split}
			I
			&\equiv \iint_Q f(\bs{n}\cdot\nabla \vp)\df x\df t\\
			&=\iint_Q (\pt_{tt}\vp)(\bs{n}\cdot \nabla \vp)\df x\df t-\iint_Q \Div(w\nabla \vp) (\bs{n}\cdot \nabla \vp)\df x\df t\equiv I_1+I_2.
		\end{split}
	\end{equation}

Employing integration by parts and considering that
 $\supp \bs{n}\s \ol{\mcO(\pt\Om;2R_0)}$,  we have
	\begin{equation*}
		\begin{split}
			I_1
			&=\int_\Om (\pt_t\vp)(\bs n\cdot \nabla \vp)\df x\bigg|_{t=0}^{t=T}-\f{1}{2}\iint_Q \bs{n}\cdot \nabla |\pt_t\vp|^2\df x\df t\\
			&=\int_\Om (\pt_t\vp)(\bs n\cdot \nabla \vp)\df x\bigg|_{t=0}^{t=T}-\f{1}{2}\iint_{\pt Q} |\pt_t\vp|^2\bs{n}\cdot \nu\df S\df t+\f{1}{2}\iint_Q |\pt_t\vp|^2\Div\bs{n}\df x\df t, 
		\end{split}
	\end{equation*}
	 Given that  $\pt_t\vp=0$ on $\pt Q$,  $\supp \bs{n}\s \mcO(\pt\Om;2R_0)$ and \eqref{12.01.2}, we derive 
	\begin{equation}\label{09.05.1}
		\begin{split}
			|I_1|
			&\leq C\left(\|\vp^0\|_{H_0^1(\Om;w)}^2+\|\vp^1\|_{L^2(\Om)}^2+\|f\|_{L^2(Q)}^2\right),
		\end{split}
	\end{equation}
	where the constant $C>0$ depends only on $\al, R_0, T$ and $\mcO(\pt\Om;2R_0)$.
	
	Further integration by parts, considering $\vp=0$ on $\pt Q$ and $\supp \bs{n}\s \mcO(\pt\Om;2R_0)$, yields 
	\begin{equation*}
		\begin{split}
			I_2
			&=-\iint_{\pt Q} w\left|\f{\pt \vp}{\pt \nu}\right|^2\df S\df t+\iint_Q w\nabla \vp\cdot \nabla (\bs{n}\cdot \nabla \vp)\df x\df t\\
			&=-\f{1}{2}\iint_{\pt Q} w\left|\f{\pt \vp}{\pt \nu}\right|^2\df S\df t+\iint_Q w \left(\nabla \vp\cdot [(D\bs{n})\nabla \vp]\right) \df x\df t\\
			&\hspace{4.5mm}-\f{1}{2}\iint_Q (\bs{n}\cdot \nabla w)|\nabla \vp|^2\df x\df t-\f{1}{2}\iint_Q w|\nabla \vp|^2\Div \bs{n}\df x\df t.
		\end{split}
	\end{equation*}
Combining this result with \eqref{02.13.1} and  \eqref{09.05.1} and $I=I_1+I_2$, we obtain
	\begin{equation}\label{12.02.1}
		\begin{split}
			\iint_{\pt Q} \left|\f{\pt \vp}{\pt \nu}\right|^2\df S\df t
			&\leq C\left(\|\vp^0\|_{H_0^1(\Om;w)}+\|\vp^1\|_{L^2(\Om)}+\|f\|_{L^2(Q)}\right)^2,
		\end{split}
	\end{equation}
	where the constant $C>0$ depends only on $\al, R_0, T$ and $\mcO(\pt\Om;3R_0)$.
	
	{\it Step 8: We prove the forth term in \eqref{11.18.2}.}
	
	Since $z=\pt_t\vp$  serves as the weak solution to the following system: 
	\begin{equation*}
		\begin{cases}
			\pt_{tt}z+\mcA z=\pt_tf, &\mbox{in }Q, \\
			z=0, &\mbox{on }\pt Q, \\
			z(0)=\vp^1, \pt_tz(0)=f(0)-\mcA \vp^0, &\mbox{in }\Om,
		\end{cases}
	\end{equation*}
	 as derived in {\it Step 7}, we obtain 
	\begin{equation*}
		\left\|\f{\pt z}{\pt \nu}\right\|_{L^2(\pt Q)}\leq C\left(\|\vp^0\|_{D(\mcA)}+\|\vp^1\|_{H_0^1(\Om;w)}+\|f\|_{H^1(0,T; L^2(\Om))}\right),
	\end{equation*}
	where the constant $C>0$ depends only on $\al, R_0, T$ and $\mcO(\pt\Om;3R_0)$.

 By combining this result with the fourth term of \eqref{11.18.1} and utilizing the identity
 $\pt_t\f{\pt\vp}{\pt\nu}=\f{\pt (\pt_t \vp)}{\pt\nu}$,  we arrive at the fourth term of \eqref{11.18.2}.
	
	{\it Step 9: We end the proof of this theorem.}

From {\it Steps 1-5}, we establish the first three terms in \eqref{11.18.1}. The fourth term of \eqref{11.18.1} is derived from {\it Step 7}. Additionally, \eqref{11.18.2} is obtained by combining the results from {\it Step 6} and {\it Step 8}. This completes the proof of the theorem.
\end{proof}

 The subsequent lemma provides an equivalent criterion for the existence of solutions to equation \eqref{12.14.1}.

\begin{lemma}\label{11.18.L1}
	The function
	\begin{equation*}
		\vp\in L^2(0,T; H_0^1(\Om;w)), \mbox{ satisfying  } \pt_t\vp\in L^2(Q), \ \pt_{tt}\vp\in L^2(0,T; H^{-1}(\Om;w))
	\end{equation*}
	constitutes the weak solution of \eqref{12.14.1} with respect to $(\vp^0,\vp^1,f)$ if and only if
	\begin{equation*}
		\vp\in L^2(0,T; H_0^1(\Om;w)), \mbox{ with } \pt_t\vp \in  L^2(Q)
	\end{equation*}
	 fulfills the following condition:
	\begin{equation}\label{11.18.3}
		\begin{split}
			&\iint_Q \vp(t)  \pt_{tt}\psi(t) \df x\df t+\iint_Q w\nabla \vp\cdot \nabla \psi\df x\df t\\
			&=\iint_Q f\psi\df x\df t+\int_\Om \vp^1\psi(0)\df x-\int_\Om \vp^0\pt_t\psi(0)\df x
		\end{split}
	\end{equation}
	for all $\psi\in C^\iy(\ol Q)$ such that  $\supp\psi(t)\s \Om$ for all $t\in [0,T]$ and $\psi(T)=\pt_t\psi(T)=0$.
\end{lemma}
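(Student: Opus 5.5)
Both directions rest on integrating by parts in time against test functions that vanish at $t=T$, using the regularity $\vp\in H^2(0,T;H^{-1}(\Om;w))$ and $\pt_t\vp\in H^1(0,T;H^{-1}(\Om;w))$. Both give $\vp\in C([0,T];L^2(\Om))$ and $\pt_t\vp\in C([0,T];H^{-1}(\Om;w))$, as in \eqref{02.04.4}, so the traces at $t=0$ make sense. Throughout, the test functions $\psi\in C^\iy(\ol Q)$ with $\supp\psi(t)\s\Om$ belong to $C^2([0,T];H_0^1(\Om;w))$. This holds because a smooth function with compact support in $\Om$ lies in $H_0^1(\Om;w)$ (compare Lemma \ref{12.01.L1}).

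\emph{Necessity.} Let $\vp$ be a weak solution in the sense of Definition \ref{11.13.D1}. Take $\psi(t)$ as the test function in \eqref{11.18.4} and integrate over $(0,T)$. Integrate the duality term by parts twice in $t$:
\begin{equation*}
\int_0^T\lg \pt_{tt}\vp,\psi\rg\df t=\Big[\lg\pt_t\vp,\psi\rg-(\vp,\pt_t\psi)_{L^2(\Om)}\Big]_{0}^{T}+\iint_Q\vp\,\pt_{tt}\psi\df x\df t .
\end{equation*}
This formula is justified as the standard product rule for $u\in H^1(0,T;V')$, $v\in H^1(0,T;V)$ with $V=H_0^1(\Om;w)$. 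One can also regularize in time, or approximate $\vp$ by the Galerkin functions $y^m$ of Theorem \ref{11.14.T1}, for which everything is smooth in $t$. Since $\psi(T)=\pt_t\psi(T)=0$, the boundary terms at $T$ vanish. Using $\vp(0)=\vp^0$ and $\pt_t\vp(0)=\vp^1\in L^2(\Om)$, the terms at $t=0$ become $\int_\Om\vp^1\psi(0)\df x-\int_\Om\vp^0\pt_t\psi(0)\df x$, with the appropriate signs. This is exactly \eqref{11.18.3}.

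\emph{Sufficiency.} Suppose $\vp$, with the stated regularity, satisfies \eqref{11.18.3}. First choose $\psi(x,t)=\theta(t)v(x)$ with $\theta\in C_0^\iy(0,T)$ and $v\in C_0^\iy(\Om)$. Then \eqref{11.18.3} says that, in $\mathcal D'(0,T)$,
\begin{equation*}
\f{\df^2}{\df t^2}(\vp,v)_{L^2(\Om)}+\int_\Om w\nabla\vp\cdot\nabla v\df x=(f,v)_{L^2(\Om)} .
\end{equation*}
The right side and the gradient term are $L^2(0,T)$ functions, bounded by $C\|v\|_{H_0^1(\Om;w)}$ in the $L^2(0,T)$ norm. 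Hence the distributional derivative $\pt_{tt}\vp$ extends to an element of $L^2(0,T;H^{-1}(\Om;w))$. It satisfies \eqref{11.18.4} for $v\in C_0^\iy(\Om)$, and then for all $v\in H_0^1(\Om;w)$ by density of $C_0^\iy(\Om)$ in $H_0^1(\Om;w)$. This gives (i) of Definition \ref{11.13.D1}.

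For the initial conditions, repeat the necessity computation with a general admissible $\psi=\theta(t)v(x)$, where $\theta(T)=\theta'(T)=0$ and $\theta(0),\theta'(0)$ are arbitrary. Compare the result with \eqref{11.18.3}. The interior terms cancel, leaving
\begin{equation*}
\theta(0)\lg\pt_t\vp(0)-\vp^1,v\rg-\theta'(0)(\vp(0)-\vp^0,v)_{L^2(\Om)}=0 .
\end{equation*}
Choosing first $\theta(0)=0,\theta'(0)=1$ and then $\theta(0)=1,\theta'(0)=0$, and using density of $C_0^\iy(\Om)$, gives $\vp(0)=\vp^0$ and $\pt_t\vp(0)=\vp^1$. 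This is (ii).

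The main obstacle is the rigorous time integration by parts with $H^{-1}(\Om;w)$-valued second derivatives, together with the meaning of the traces $\pt_t\vp(0)$. I would handle both through the Gelfand triple $H_0^1(\Om;w)\hookrightarrow L^2(\Om)\hookrightarrow H^{-1}(\Om;w)$, which is dense and continuous by Remark \ref{08.16.R1}, together with Lions--Magenes type continuity results. If needed, I would mollify in time after extending $\vp$ past the endpoints.
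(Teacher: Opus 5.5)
Your proof is correct. The necessity direction is the same as the paper's. For (i) you reach the same identity, $\pt_{tt}\vp=f-\mcA\vp$ in $L^2(0,T;H^{-1}(\Om;w))$, and then read off \eqref{11.18.4} for a.e.\ $t$ directly; the paper instead localizes in time with a cut-off $\zeta$ and invokes the Lebesgue point theorem.

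The real difference is in (ii). The paper expands $\vp=\sum_n\vp_n(t)\Phi_n$ and tests \eqref{11.18.3} with $\zeta(t)v$, letting $v\to\Phi_n$ in $H_0^1(\Om;w)$, to see that each coefficient solves the scalar problem \eqref{11.18.6}. It then identifies $\vp$ with the solution $\wt\vp$ built by the Galerkin scheme of Theorem \ref{11.14.T1}, from which it inherits $\vp(0)=\vp^0$ and $\pt_t\vp(0)=\vp^1$. You instead redo the time integration by parts for $\psi=\theta(t)v$ with $\theta(T)=\theta'(T)=0$ and $\theta(0),\theta'(0)$ free, and compare with \eqref{11.18.3}; this gives both traces at once.

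Your argument is the standard one and is self-contained. It uses only the Gelfand triple and the Lions--Magenes continuity and integration-by-parts facts, not the eigenbasis, the compact embedding, or the existence theory. So the lemma no longer depends on Theorem \ref{11.14.T1}, and the argument works for any bounded bilinear form on the triple, not just operators with discrete spectrum. The paper's route additionally gives the representation of $\vp$ as the series/Galerkin solution, at the price of that heavier machinery.

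One step should be tightened. You argue that each scalar function $t\mapsto(f,v)_{L^2(\Om)}-\int_\Om w\nabla\vp\cdot\nabla v\df x$ lies in $L^2(0,T)$ with norm at most $C\|v\|_{H_0^1(\Om;w)}$. That alone does not produce an element of $L^2(0,T;H^{-1}(\Om;w))$, since a bounded map $H_0^1(\Om;w)\to L^2(0,T)$ need not come from a Bochner function. Argue instead, as the paper does, that $g:=f-\mcA\vp$ belongs to $L^2(0,T;H^{-1}(\Om;w))$, because $\|\mcA\vp(t)\|_{H^{-1}(\Om;w)}\le\|\vp(t)\|_{H_0^1(\Om;w)}$ and $L^2(\Om)\hookrightarrow H^{-1}(\Om;w)$. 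Then conclude $\pt_{tt}\vp=g$ from $\f{\df^2}{\df t^2}(\vp,v)_{L^2(\Om)}=\lg g,v\rg$ in $\mathcal D'(0,T)$ for all $v$ in a dense subset of $H_0^1(\Om;w)$.
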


\begin{proof}
Let $\psi\in C^\iy(\ol Q)$ with $\supp \psi(t)\s \Om$ for all $t\in [0,T]$ and $\psi(T)=\pt_t\psi(T)=0$. According to Definition \ref{11.13.D1} (i), we have
\begin{equation}\label{02.04.9}
	\begin{split}
		\lg \pt_{tt}\vp, \psi\rg_{H^{-1}(\Om;w), H_0^1(\Om;w)}+\int_\Om w\nabla \vp\cdot\nabla\psi\df x=\int_\Om f\psi\df x
	\end{split}
\end{equation}
for almost every $t\in [0,T]$, and
\begin{equation}\label{02.04.10}
	\vp\in C([0,T]; L^2(\Om)), \text{ and } \pt_t\vp\in C([0,T]; H^{-1}(\Om;w)).
\end{equation} By integrating over the interval $[0,T]$ based on \eqref{02.04.9}, we obtain
\begin{equation*}
	\int_0^T\lg \pt_{tt}\vp, \psi\rg_{H^{-1}(\Om;w), H_0^1(\Om;w)}\df t+\iint_Q w\nabla\vp\cdot \nabla\psi\df x\df t=\iint_Q f\psi\df x\df t.
\end{equation*}
Applying integration by parts with respect to $t\in [0,T]$ and utilizing \eqref{02.04.10}, we derive
\begin{equation*}
	\begin{split}
		&\int_0^T\vp\pt_{tt}\psi\df x\df t+\iint_Q w\nabla\vp\cdot \nabla\psi\df x\df t\\
		&=\iint_Q f\psi\df x\df t-(\vp(0), \pt_t\psi(0))_{L^2(\Om)}+\lg \pt_t\vp(0), \psi(0)\rg_{H^{-1}(\Om;w), H_0^1(\Om;w)}\\
		&=\iint_Q f\psi\df x\df t+\int_\Om \vp^1\psi(0)\df x-\int_\Om \vp^0\pt_t\psi(0)\df x.
	\end{split}
\end{equation*}
This establishes \eqref{11.18.3}.

	  We proceed to establish the sufficiency. 
	
Indeed,   for each $\psi\in L^2(0,T; H_0^1(\Om;w))$,  we have 
	\begin{equation*}
		\begin{split}
			\|\mcA \vp\|_{L^2(0,T; H^{-1}(\Om;w))}
			&=\sup_{  \|\psi\|_{L^2(0,T; H_0^1(\Om;w))}\leq 1}\int_0^T\lg \mcA \vp, \psi\rg_{H^{-1}(\Om;w), H_0^1(\Om;w)}\df t\\
			&=\sup_{  \|\psi\|_{L^2(0,T; H_0^1(\Om;w))}\leq 1}\iint_Q w\nabla \vp\cdot \nabla\psi\df x\df t\leq \|\vp\|_{L^2(0,T; H_0^1(\Om;w))}.
		\end{split}
	\end{equation*}
	This implies that  $\mcA \vp\in L^2(0,T; H^{-1}(\Om;w))$. Now, for each $\psi\in L^2(0,T; H_0^1(\Om;w))$,  since
	\begin{equation*}
		\begin{split}
			\iint_Q \lg f, \xi\rg_{H^{-1}(\Om;w), H_0^1(\Om;w)}\df x\df t
			&=
			\iint_Q f\xi\df x\df t\\
			&\leq \|f\|_{L^2(Q)}\|\xi\|_{L^2(Q)}\leq C\|f\|_{L^2(Q)}\|\xi\|_{L^2(0,T; H_0^1(\Om;w))}
		\end{split}
	\end{equation*}
	 by Remark \ref{08.16.R1}, we conclude that  $f\in L^2(0,T; H^{-1}(\Om;w))$.

Next, taking  $\xi\in C_0^\iy(Q)$, from \eqref{11.18.3},  we obtain 
	\begin{equation*}
		\begin{split}
			\iint_Q \vp\pt_{tt}\xi\df x\df t
			&=\iint_Q f\xi\df x\df t-\iint_Q w\nabla \vp\cdot \nabla \xi\df x\df t\\
			&=\int_0^T\lg f, \xi\rg_{H^{-1}(\Om;w),H_0^1(\Om;w)}\df x\df t-\int_0^T \lg \mcA \vp, \xi\rg_{H^{-1}(\Om;w), H_0^1(\Om;w)}\df t,
		\end{split}
	\end{equation*}
	and thus 
	\begin{equation*}
		\pt_{tt}\vp=f-\mcA \vp
	\end{equation*}
	in the sense of distribution. Therefore,
	\begin{equation*}
		\pt_{tt}\vp\in L^2(0,T; H^{-1}(\Om;w)).
	\end{equation*}
	
	Fix $t\in (0,T)$ and $v\in C_0^\iy(\Om)$ and  $\de\in (0, \f{1}{4}\min\{t,T-t\})$. For any  $\e\in (0,\de)$,  choose  $\zeta\in C^\iy(\ol\R)$
with $0\leq \zeta\leq 1$ such that  $\zeta=1$ on $(t-\de,t+\de)$ and $\supp \zeta\s (t-\de-\e,t+\de+\e)$. Taking $\psi=\zeta v$, then $\psi\in C_0^\iy(Q)$, and from \eqref{11.18.3},  we get (in distribution sense)
	\begin{equation*}
		\begin{split}
			\int_0^T \zeta \lg \pt_{tt}\vp, v\rg_{H^{-1}(\Om;w), H_0^1(\Om;w)}\df t+\iint_Q \zeta w\nabla \vp \cdot\nabla v\df x\df t=\iint_Q \zeta f v\df x\df t.
		\end{split}
	\end{equation*}
	Hence, letting $\e\ra 0$, we obtain 
	\begin{equation*}
		\int_{t-\de}^{t+\de}\lg \pt_{tt}\vp, v\rg_{H^{-1}(\Om;w), H_0^1(\Om;w)}\df t+\iint_{\Om\ts (t-\de,t+\de)} w\nabla\vp \cdot  \nabla v\df x\df t=\iint_{\Om\ts (t-\de,t+\de)} f v\df x\df t.
	\end{equation*}
	Therefore, letting $\de\ra 0$,   for each   $v\in C_0^\iy(\Om)$ and almost every  $t\in (0,T)$, we get
	\begin{equation*}
		\lg \pt_{tt}\vp, v\rg_{H^{-1}(\Om;w), H_0^1(\Om;w)}+\int_\Om w\nabla \vp\cdot \nabla v\df x=\int_\Om fv\df x
	\end{equation*}  
according to the Lebesgue point theorem.   This implies \eqref{11.18.4}  by the density of $C_0^\iy(\Om)$  in $H_0^1(\Om;w)$.
	
	Finally,  denote $\vp(x,t)=\sum_{n=1}^\iy \vp_n(t)\Phi_n(x)$. Then,  for each $\zeta\in C^\iy(\R)$ such that $\zeta(T)=0, \pt_t\zeta(T)=0$, and for each $v\in C_0^\iy(\Om)$, take $\psi(x,t)=\zeta(t)v(x)$. From \eqref{11.18.3} we have
	\begin{equation*}
		\begin{split}
		&\int_0^T (\vp(t),v)_{L^2(\Om)} \pt_{tt}\zeta \df t+\int_0^T (\vp(t),v)_{H_0^1(\Om;w)} \zeta \df t\\ &=\int_0^T (f,v)_{L^2(\Om)}\zeta \df t+\zeta(0)(\vp^1,v)_{L^2(\Om)}-(\pt_t\zeta(0))(\vp^0,v)_{L^2(\Om)}.
		\end{split}
	\end{equation*}
	Letting $v\ra \Phi_n\ (n\in\N)$ strongly in $H_0^1(\Om;w)$, noting that $C_0^\iy(\Om)$ is dense in $H_0^1(\Om;w)$, we obtain 
	\begin{equation*}
		\begin{split}
			\int_0^T\vp_n(t)\pt_{tt}\zeta\df t+\la_n\int_0^T \vp_n(t) \zeta \df t=\int_0^Tf_n(t) \zeta\df t+\zeta(0)\vp_n^0-(\pt_t\zeta(0))\vp_n^1,
		\end{split}
	\end{equation*}
	where $f_n(t)=(f(t), \Phi_n)_{L^2(\Om)}, \vp_n^1=(\vp^1, \Phi_n)_{L^2(\Om)}, \vp_n^0=(\vp^0, \Phi_n)_{L^2(\Om)}$.  
It is easily verified that $\vp_n$ is a weak solution of the following one-dimensional system
	\begin{equation}\label{11.18.6}
		\begin{cases}
			\pt_{tt}\vp_n(t)+\la_n\vp_n(t)=f_n(t), & t\in [0,T],\\
			\vp_n(0)=\vp_n^0, \pt_t\vp_n(0)=\vp_n^1.
		\end{cases}
	\end{equation}
	 By the same argument as in the proof of Theorem \ref{11.14.T1}, we know that the sequence 
	$\{\varphi^m = \sum_{n=1}^m \varphi_n(t)\Phi_n\}_{m\in\mathbb{N}}$
	converges weakly to the solution $\wt\vp$ of \eqref{12.14.1} with respect to the data $(\varphi^0,\varphi^1,f)$. Note that  the sequence $\{\varphi^m\}_{m\in\mathbb{N}}$ converges strongly to $\varphi(x,t)$ in $L^2(Q)$. Hence $\wt\vp=\vp$.  This implies that $\vp(0)=\sum_{n=1}^\iy \vp_n(0)\Phi_n=\vp^0, \pt_t\vp(0)=\sum_{n=1}^\iy [\pt_t\vp_n(0)]\Phi_n=\vp^1$. This completes the proof of the lemma.
\end{proof}
	
	\section{Approximations}\label{S3}
	
	 In this section, we construct a sequence of uniformly hyperbolic equations that serve as approximations to the degenerate hyperbolic equation \eqref{12.14.1}.

We set  $\e\in (0,R_0)$  throughout this section.
	
	\begin{lemma}\label{12.17.L1}
		There exists a $C^{2,1}$ function $\psi_\e: \mathbb{R}\to\mathbb{R}$ with $\e\in (0,R_0)$ satisfying 
		\begin{equation*}
			\psi_\e(x)=|x| \quad \text{for } |x|\geq \e, \quad |x|\leq \psi_\e(x)\leq \e \quad \text{on } [-\e, \e],
		\end{equation*}
		and
		\begin{equation}\label{12.16.1}
			\psi_\e\geq \frac{3\e}{8} \quad \text{on } \mathbb{R}, \quad |\psi_\e'|\leq C   \text{ on } \mathbb{R}, \quad |\psi_\e''|\leq \frac{C}{\e},  \text{ and } |\psi_\e'''|\leq \frac{C}{\e^2}  \text{ on } \mathbb{R},
		\end{equation}
		where the constants $C>0$ are absolute.
	\end{lemma}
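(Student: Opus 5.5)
We construct $\psi_\e$ explicitly as an even polynomial on $[-\e,\e]$, glued to $|x|$ outside. We take
\begin{equation*}
\psi_\e(x)=a\e+\f{b}{\e}x^2+\f{c}{\e^3}x^4+\f{d}{\e^5}x^6,\quad |x|\le\e,\qquad \psi_\e(x)=|x|,\quad |x|\ge\e,
\end{equation*}
and choose the constants $a,b,c,d$ so that $\psi_\e$ is $C^{2,1}$, i.e.\ so that $\psi_\e,\psi_\e',\psi_\e''$ match those of $|x|$ at $x=\pm\e$. By evenness it suffices to check $x=\e$, where the conditions $\psi_\e(\e)=\e$, $\psi_\e'(\e)=1$, $\psi_\e''(\e)=0$ read
\begin{equation*}
a+b+c+d=1,\qquad 2b+4c+6d=1,\qquad 2b+12c+30d=0.
\end{equation*}
This system has a one-parameter family of solutions. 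A natural simpler choice is $d=0$, which gives $c=-1/8$, $b=3/4$, $a=3/8$, i.e.
\begin{equation*}
\psi_\e(x)=\f{3\e}{8}+\f{3x^2}{4\e}-\f{x^4}{8\e^3}.
\end{equation*}
This is the standard quartic gluing, and it also agrees with the constant $\f{3}{8\e^3}$ in the introduction: indeed $\psi_\e-x\psi_\e'=\f{3}{8\e^3}(\e^2-x^2)^2$.

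We then verify the properties by scaling. Write $\psi_\e(x)=\e\,\Psi(x/\e)$ with $\Psi(s)=\f38+\f34 s^2-\f18 s^4$ for $|s|\le1$ and $\Psi(s)=|s|$ otherwise. Then $\psi_\e'=\Psi'(x/\e)$, $\psi_\e''=\e^{-1}\Psi''$ and $\psi_\e'''=\e^{-2}\Psi'''$, so the derivative bounds in \eqref{12.16.1} reduce to the $\e$-free bounds $|\Psi'|,|\Psi''|,|\Psi'''|\le C$. These hold because $\Psi$ is a polynomial on $[-1,1]$ and is linear outside; on $[-1,1]$ we have $|\Psi'|\le\f32+\f12$, $|\Psi''|\le 3$, $|\Psi'''|\le 3$. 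Since $\Psi''$ is continuous (it equals $0$ at $\pm1$) and piecewise smooth, $\Psi\in C^{2,1}$, with $\Psi'''$ understood a.e.\ (it jumps at $\pm1$).

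For the inequalities, $\Psi'(s)=\f32 s-\f12 s^3\ge0$ on $[0,1]$, so $\Psi$ is increasing on $[0,1]$ with $\Psi(0)=\f38$ and $\Psi(1)=1$. Hence $\f38\le\Psi\le1$ on $[-1,1]$ and $\Psi\ge\f38$ on $\mathbb{R}$. The lower bound $\Psi(s)\ge|s|$ on $[0,1]$ follows from $\Psi(s)-s=\f18(1-s)^3(3+s)\ge0$, which can be confirmed by expanding. Scaling back gives $|x|\le\psi_\e\le\e$ on $[-\e,\e]$ and $\psi_\e\ge\f{3\e}{8}$.

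The only delicate point is the interpretation of $\psi_\e'''$. It does not exist at $\pm\e$, so the bound must be read a.e., or equivalently as the Lipschitz bound on $\psi_\e''$, consistent with the $C^{2,1}$ claim. The rest is routine algebra.
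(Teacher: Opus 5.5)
Your proposal is correct and follows the paper's proof: the paper also fixes $\psi_\e(x)=\frac{3\e}{8}+\frac{3}{4\e}x^2-\frac{1}{8\e^3}x^4$ on $[-\e,\e]$ by requiring $\psi_\e$, $\psi_\e'$, $\psi_\e''$ to match those of $|x|$ at $\pm\e$; it starts from a general quartic, whereas you use an even sextic with $d=0$. Your scaling argument, the identity $\Psi(s)-s=\frac18(1-s)^3(3+s)$, and your observation that $\psi_\e'''$ does not exist at $\pm\e$ (so its bound must be read a.e., i.e.\ as a Lipschitz bound on $\psi_\e''$) fill in verifications that the paper only asserts.
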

	\begin{proof}
		 Consider the polynomial function 
		\begin{equation*}
			\psi_\e(x)=\sum_{i=0}^4a_ix^{4-i},
		\end{equation*}
		where $a_0, \cdots, a_4$  are constants to be determined. We impose the following conditions:
		\begin{equation*}
			\begin{split}
				\psi_\e(-\e)=\e, \quad \psi_\e(\e)=\e, \quad \psi_\e'(-\e)=-1, \quad \psi_\e'(\e)=1, \quad \psi_\e''(\pm \e)=0. 
			\end{split}
		\end{equation*}
		 These conditions yield the solution 
		\begin{equation}\label{12.20.1}
			\psi_\e(x)=\frac{3\e}{8}+\frac{3}{4\e}x^2-\frac{1}{8\e^3}x^4, \quad x\in[-\e,\e].
		\end{equation}
		This function satisfies the required properties.
	\end{proof}

	\begin{remark}\label{06.07.R1}
		Let $x\in\mathbb{R}^N$, and consider the function $\psi_\e(x)=\psi_\e(|x|)$ defined in Lemma  \ref{12.17.L1}. We have (for all $i,j,k=1,\cdots, N$)
		\begin{equation}\label{12.18.1}
			\begin{split}
				\f{\pt \psi_\e}{\pt x_i}
				&=\frac{3}{2\e}x_i-\frac{1}{2\e^3}|x|^2x_i, \quad 
				\frac{\partial^2 \psi_\e}{\partial x_i\partial x_j}
				=\delta_{ij}\left[\frac{3}{2\e}-\frac{1}{2\e^3}|x|^2\right]-\frac{1}{\e^3}x_ix_j
			\end{split}
		\end{equation}
		on $B_\e$,  and
		\begin{equation}\label{12.18.2}
			\begin{split}
				&\f{\pt \psi_\e}{\pt x_i}=|x|^{-1}x_i, \quad \frac{\partial^2\psi_\e}{\partial x_i\partial x_j}=\delta_{ij}|x|^{-1}-x_ix_j|x|^{-3}
			\end{split}
		\end{equation}
		on $\Omega\setminus B_\e$. Here and in what follows, we denote $\de_{ij}, \de^{ij}$ and $\de_i^j$  
denote the Kronecker delta symbol, which equals  $1$ when $i=j$ and   $0$ when $i\neq j$.		
	\end{remark}
	
	Denote
	\begin{equation}\label{11.14.1}
		\begin{split}
			w_\e(x)= (\psi_\e(x))^\al=(\psi_\e(|x|))^\al, \quad x\in\Omega.
		\end{split}
	\end{equation}
	 By direct computation, we obtain
	\begin{equation}\label{11.25.1}
		\f{\pt w_\e}{\pt x_i}=\al \psi_\e^{\al-1}\f{\pt \psi_\e}{\pt x_i}=\al w_\e \psi_\e^{-1}\f{\pt\psi_\e}{\pt x_i}, \mbox{ and } \f{\pt w_\e^{-1}}{\pt x_i}=-\al \psi_\e^{-\al-1}\f{\pt\psi_\e}{\pt x_i}=-\al w_\e^{-1}\psi_\e^{-1}\f{\pt\psi_\e}{\pt x_i}.
	\end{equation}

For each $\e\in (0,R_0)$,  replace$w$  with  $w_\e$.  By employing the same reasoning as in Section \ref{S2}, we can define the following Hilbert spaces:
\begin{equation*}
	H_0^1(\Om;w_\e), \  H^1(\Om;w_\e), \mbox{ and } H^2(\Om;w).
\end{equation*}
 It is worth noting that  $H_0^1(\Om;w_\e)=H_0^1(\Om)$, $H^1(\Om;w_\e)=H^1(\Om)$ and $H^2(\Om;w_\e)=H^2(\Om)$,  which implies $H^{-1}(\Om;w_\e)=H^{-1}(\Om)$.
  Denote
\begin{equation}\label{02.13.2}
	\mcA_\e u=-\Div(w_\e\nabla u), \mbox{ with } D(\mcA_\e)=H^2(\Om;w_\e)\cap H_0^1(\Om;w_\e)=H^2(\Om)\cap H_0^1(\Om).
\end{equation}
	
	\begin{lemma}\label{08.16.L2}
		Let $z\in H_0^1(\Omega;w_\e)$. Then,
		\begin{equation}\label{08.20.1}
			(N+\al-2)^2\int_\Om \psi_\e^{\al-2}z^2\df x\leq 4\int_\Om \psi_\e^\al |\nabla z|^2\df x.
		\end{equation}
		Furthermore, 
		\begin{equation}\label{08.31.10}
			\int_\Om z^2\df x\leq \f{4M^{2-\al}}{(N+\al-2)^2}\int_\Om \psi_\e^\al |\nabla z|^2\df x.
		\end{equation}
	\end{lemma}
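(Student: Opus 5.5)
The plan is the standard vector-field (divergence) proof of Hardy's inequality, with $x$ replaced by the field $V=\psi_\e^{\al-1}\nabla\psi_\e\,\psi_\e$, i.e. essentially $\psi_\e^{\al-2}\psi_\e\nabla\psi_\e$. By density of $C_0^\iy(\Om)$ in $H_0^1(\Om;w_\e)=H_0^1(\Om)$, it suffices to take $z\in C_0^\iy(\Om)$; both sides of \eqref{08.20.1} are continuous in the $H^1(\Om)$ norm because $\psi_\e$ is bounded above and below by positive constants on $\Om$ (Lemma \ref{12.17.L1}).

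Set $F=\psi_\e^{\al-1}\nabla\psi_\e$ (as a function of $x\in\R^N$), which is $C^1$ since $\psi_\e\in C^{2,1}$ and $\psi_\e\ge 3\e/8$. Integrating by parts,
\begin{equation*}
\int_\Om z^2\Div F\df x=-2\int_\Om zF\cdot\nabla z\df x .
\end{equation*}
Write $\Div F=(\al-1)\psi_\e^{\al-2}|\nabla\psi_\e|^2+\psi_\e^{\al-1}\De\psi_\e$. Outside $B_\e$ we have $\psi_\e=|x|$, $|\nabla\psi_\e|=1$ and $\De\psi_\e=(N-1)/|x|$, so $\Div F=(N+\al-2)\psi_\e^{\al-2}$ exactly. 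The key step is to show $\Div F\ge (N+\al-2)\psi_\e^{\al-2}$ on $B_\e$ as well. By \eqref{12.18.1}, with $r=|x|$, we have $|\nabla\psi_\e|=\psi_\e'(r)=\frac{3r}{2\e}-\frac{r^3}{2\e^3}\in[0,1]$ and $\De\psi_\e=\psi_\e''+\frac{N-1}{r}\psi_\e'$. Multiplying by $\psi_\e^{2-\al}$, the required inequality becomes
\begin{equation*}
(\al-1)(\psi_\e')^2+\psi_\e\psi_\e''+(N-1)\frac{\psi_\e\psi_\e'}{r}\ge N+\al-2 .
\end{equation*}
This is where I expect the real work, and where the quantity $\psi_\e-x\cdot\nabla\psi_\e$ mentioned in the introduction enters. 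I would use $\psi_\e''\ge0$ on $[0,\e]$, and $\psi_\e\psi_\e'/r\ge (\psi_\e')^2$, which follows from $\psi_\e\ge r\psi_\e'$, i.e. from $\psi_\e-x\cdot\nabla\psi_\e\ge\frac{3}{8\e^3}(\e^2-r^2)^2\ge0$. If the inequality as displayed fails near $r=0$ (where $\psi_\e'\to0$), I would instead aim for a weaker but sufficient bound. One option is to choose the multiplier $F=\psi_\e^{\al-2}x$ rather than $\psi_\e^{\al-1}\nabla\psi_\e$. Then $\Div F=\psi_\e^{\al-2}\bigl(N+(\al-2)\,x\cdot\nabla\psi_\e/\psi_\e\bigr)$, and since $0\le x\cdot\nabla\psi_\e\le\psi_\e$ and $\al-2<0$, this gives $\Div F\ge(N+\al-2)\psi_\e^{\al-2}$ directly. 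This second choice is what I would actually commit to, with $|F|\le\psi_\e^{\al-1}\cdot(|x|/\psi_\e)\le\psi_\e^{\al-1}$ because $|x|\le\psi_\e$.

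Granting $\Div F\ge(N+\al-2)\psi_\e^{\al-2}$ and $|F|\le\psi_\e^{\al-1}$, Cauchy--Schwarz gives
\begin{equation*}
(N+\al-2)\int_\Om\psi_\e^{\al-2}z^2\le2\Bigl(\int_\Om\psi_\e^{\al-2}z^2\Bigr)^{1/2}\Bigl(\int_\Om\psi_\e^{\al}|\nabla z|^2\Bigr)^{1/2}.
\end{equation*}
Squaring and dividing yields \eqref{08.20.1}. For \eqref{08.31.10}, note that $\psi_\e\le\max(|x|,\e)\le M$ on $\Om$, so $\psi_\e^{\al-2}\ge M^{\al-2}$ because $\al<2$. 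Hence $\int_\Om z^2\le M^{2-\al}\int_\Om\psi_\e^{\al-2}z^2$, and combining with \eqref{08.20.1} gives \eqref{08.31.10}, exactly as in Remark \ref{08.16.R1}.
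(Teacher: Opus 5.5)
Your committed argument is exactly the paper's proof, so the proposal is correct. The paper uses the same multiplier $F=\psi_\e^{\al-2}x$, the same lower bound $\Div F\ge(N+\al-2)\psi_\e^{\al-2}$ coming from $\psi_\e-x\cdot\nabla\psi_\e\ge0$, the same inequality $|x|\le\psi_\e$ in the Cauchy--Schwarz step, and the same bound $\psi_\e\le M$ for the Poincar\'e estimate. You were also right to discard your first multiplier $\psi_\e^{\al-1}\nabla\psi_\e$: at $r=0$ the left-hand side of your displayed inequality equals $9N/16$, which is smaller than $N+\al-2$ in general.
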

	\begin{proof}
		Since $w_\e\in C^{2,1}(\overline{\Omega})$, we have
		\begin{equation*}
			\begin{split}
				2\int_{\Omega} \psi_\e^{\al-2}z(x\cdot\nabla z)\df x
				&=\int_\Omega \psi_\e^{\al-2}x\cdot \nabla z^2\df x=\int_\Omega \Div\left(\psi_\e^{\al-2}z^2x\right)\df x-\int_\Omega z^2\Div\left(\psi_\e^{\al-2}x\right)\df x\\
				&=-(N+\al-2)\int_\Omega z^2\psi_\e^{\al-2} \df x-(2-\al)\int_{B_\e}z^2\psi_\e^{\al-3}[\psi_\e-x\cdot\nabla\psi_\e]\df x.
			\end{split}
		\end{equation*}
		Then,
		\begin{equation*}
			\begin{split}
				(N+\al-2)\int_\Omega z^2\psi_\e^{\al-2}\df x
				&\leq -2\int_\Omega \psi_\e^{\al-2}z(x\cdot \nabla z)\df x=-2\int_\Omega \left(\psi_\e^\f{\al-2}{2}z\right)\left(\psi_\e^\f{\al-2}{2}x\cdot\nabla z\right)\df x\\
				&\leq 2\left(\int_\Omega \psi_\e^{\al-2}z^2\df x\right)^\frac{1}{2}\left(\int_\Omega \psi_\e^{\al-2}|x|^2|\nabla z|^2\df x\right)^\frac{1}{2}\\
				&\leq 2\left(\int_\Omega \psi_\e^{\al-2}z^2\df x\right)^\frac{1}{2}\left(\int_\Omega |\nabla z|^2\psi_\e^\al\df x\right)^\frac{1}{2}
			\end{split}
		\end{equation*}
		due to $\al\in (0,2)$, and $\psi_\e-x\cdot\nabla\psi_\e=\frac{3}{8\e^3}(\e^2-|x|^2)^2\geq 0$ on $B_\e$   (see Remark \ref{06.07.R1}), along with Lemma \ref{12.17.L1}. This establishes \eqref{08.20.1}.

Finally, by utilizing \eqref{08.20.1} and the fact that $(\frac{3\varepsilon}{8})^\alpha\leq w_\varepsilon\leq M^\alpha$ in $\Omega$, we derive \eqref{08.31.10}. This completes the proof of this lemma.
	\end{proof}

The following lemma bears resemblance to Lemma \ref{08.15.L4}; indeed,
 $H_0^1(\Om;w_\e)=H_0^1(\Om)$. 
	
\begin{lemma}\label{11.27.L2}
	Let $\e>0$. The embedding $H_0^1(\Om;w_\e)\hra L^2(\Om)$ is compact.
\end{lemma}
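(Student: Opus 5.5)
The plan is to reduce the statement to the classical Rellich--Kondrachov theorem. The key point is that for fixed $\e\in(0,R_0)$ the weight $w_\e=\psi_\e^\al$ is bounded above and below by positive constants on $\Om$. Indeed, by Lemma \ref{12.17.L1} we have $\psi_\e\ge \f{3\e}{8}$ on $\R$. Also $\psi_\e(x)=|x|\le M$ for $|x|\ge\e$, and $\psi_\e\le\e<M$ on $B_\e$. Hence
\begin{equation*}
	\left(\f{3\e}{8}\right)^\al\le w_\e(x)\le M^\al, \quad x\in\Om,
\end{equation*}
which is the same two-sided bound already used at the end of the proof of Lemma \ref{08.16.L2}.

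First, I will use this bound to show that the norms of $H_0^1(\Om;w_\e)$ and $H_0^1(\Om)$ are equivalent on $C_0^\iy(\Om)$. With the norm chosen as in Remark \ref{08.16.R1}, this reads
\begin{equation*}
	\left(\f{3\e}{8}\right)^{\al}\int_\Om|\nabla u|^2\df x\le \int_\Om w_\e|\nabla u|^2\df x\le M^\al\int_\Om|\nabla u|^2\df x .
\end{equation*}
Since both spaces are defined as the closure of $C_0^\iy(\Om)$, they coincide as sets with equivalent norms. This justifies the identity $H_0^1(\Om;w_\e)=H_0^1(\Om)$ stated just before \eqref{02.13.2}. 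Equivalently, one can invoke the Poincar\'e inequality \eqref{08.31.10} for the $L^2$ part of the norm.

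Second, since $\Om$ is bounded with $C^2$ boundary, the embedding $H_0^1(\Om)\hra L^2(\Om)$ is compact by Rellich--Kondrachov. I will then compose the bounded identity map $H_0^1(\Om;w_\e)\to H_0^1(\Om)$ with this compact embedding. The composition of a bounded operator with a compact one is compact, which gives the claim.

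I expect no real obstacle here. The only points needing care are the two-sided bound on $w_\e$ and the observation that both spaces are completions of the same space $C_0^\iy(\Om)$. The compactness constants will depend on $\e$, which is harmless because the lemma fixes $\e$. Alternatively, one could mimic the proof of Lemma \ref{08.15.L4} from \cite{Wu}, but the direct reduction above is simpler.
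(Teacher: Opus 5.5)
Your proposal is correct and matches the paper's argument. The paper gives no separate proof and justifies the lemma only by the remark that $H_0^1(\Om;w_\e)=H_0^1(\Om)$; you establish exactly this identification from the bound $(3\e/8)^\al\le w_\e\le M^\al$ and then conclude by Rellich--Kondrachov. (A minor point: the $C^2$ regularity of $\pt\Om$ is not needed for the compactness of $H_0^1(\Om)\hra L^2(\Om)$, since boundedness of $\Om$ suffices.)
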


\begin{notation}\label{02.05.N1}
	Let  $\e\in (0,R_0)$ and $\mcA_\e$ be defined in \eqref{02.13.2}.  
From Lemma \ref{08.16.L2}, it follows that the degenerate partial differential operator
 $\mcA_\e$  possesses a discrete point spectrum:
\begin{equation}\label{11.27.5}
	0<\la_1^\e\leq \la_2^\e\leq \la_3^\e\leq  \cdots \ra +\iy,
\end{equation}
namely,  $\la_n^\e\ (n\in\N)$ satisfies the following equation
\begin{equation*}
	\begin{cases}
		\mcA_\e \Phi_n^\e=\la_n^\e \Phi_n^\e, & \mbox{in }\Om, \\
		\Phi_n^\e=0, &\mbox{in }\pt\Om.
	\end{cases}
\end{equation*}
  Hereafter, $\Phi_n^\e(x)$ denotes  the $n$-th eigenfunction of $\mcA_\e$  corresponding to the eigenvalue
   $\la_n^\e\ (n\in\N)$. The set $\{\Phi_n^\e\}_{n\in\N}$  forms an orthonormal basis for $L^2(\Om)$ and is an orthogonal subset of
      $H_0^1(\Om;w_\e)$. (\cite[Theorem 7, Appendix D, p.728]{Evans}.)
\end{notation}

\begin{lemma}\label{02.13.L1}
	Let $u=\sum_{i=1}^\iy u_i \Phi_i^\e\in H_0^1(\Om;w_\e)$ with $u_i=(u,\Phi_i^\e)_{L^2(\Om)}$ for all $i\in\N$.
	Then, $\nabla u=\sum_{i=1}^\iy u_i\nabla \Phi_i^\e$ and $\|u\|_{H_0^1(\Om;w_\e)}=(\sum_{i=1}^\iy u_i^2\la_i^\e)^\f{1}{2}$,  Moreover,
	\begin{equation*}
		u\in H^2(\Om;w_\e)\Lra \sum_{i=1}^\iy u_i^2(\la_i^\e)^2<\iy,
	\end{equation*}
	and
	\begin{equation*}
		\mcA_\e u=\sum_{i=1}^\iy u_i\la_i^\e\Phi_i^\e \mbox{ and } \|\mcA_\e  u\|_{L^2(\Om)}=\left(\sum_{i=1}^\iy u_i^2(\la_i^\e)^2\right)^\f{1}{2}.
	\end{equation*}
\end{lemma}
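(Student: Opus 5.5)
The proof will follow that of Lemma \ref{06.28.L1} almost line by line, with $w$ replaced by $w_\e$, $\mcA$ by $\mcA_\e$ and $(\la_i,\Phi_i)$ by $(\la_i^\e,\Phi_i^\e)$. The needed ingredients are all in place. By Lemma \ref{08.16.L2}, the form $\int_\Om w_\e\nabla u\cdot\nabla v\df x$ is an equivalent inner product on $H_0^1(\Om;w_\e)$. Lemma \ref{11.27.L2} gives the compact embedding. Notation \ref{02.05.N1} provides an $L^2$-orthonormal basis of eigenfunctions $\Phi_n^\e\in D(\mcA_\e)$ that satisfy
\begin{equation*}
\int_\Om w_\e\nabla\Phi_n^\e\cdot\nabla v\df x=\la_n^\e\int_\Om\Phi_n^\e v\df x \quad\mbox{for all } v\in H_0^1(\Om;w_\e).
\end{equation*}

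First, taking $v=\Phi_l^\e$ in this weak eigen-relation gives $(\Phi_k^\e,\Phi_l^\e)_{H_0^1(\Om;w_\e)}=\de_{kl}\la_k^\e$. Hence $\{(\la_k^\e)^{-1/2}\Phi_k^\e\}$ is orthonormal in $H_0^1(\Om;w_\e)$. It is complete by the same argument as before: if $u$ is $H_0^1(\Om;w_\e)$-orthogonal to every $\Phi_k^\e$, then $\la_k^\e(u,\Phi_k^\e)_{L^2}=0$ for all $k$, so $u=0$ because the family is an $L^2$ basis. The Fourier coefficients in this basis are $e_i=(\la_i^\e)^{1/2}u_i$. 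Parseval then gives $\|u\|_{H_0^1(\Om;w_\e)}^2=\sum_i\la_i^\e u_i^2$, and the expansion converges in $H_0^1(\Om;w_\e)$, which yields $\nabla u=\sum_i u_i\nabla\Phi_i^\e$ in the weighted $L^2$ sense. Since $w_\e\ge(3\e/8)^\al$, this is also convergence in $L^2(\Om)$.

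For the $H^2$ characterization, first suppose $u\in H^2(\Om;w_\e)$, and let $\vp_n=\sum_{i\le n}u_i\Phi_i^\e$. Integrating by parts in the weak sense, and using that $\Phi_i^\e\in H_0^1$, gives $(\mcA_\e u,\Phi_i^\e)_{L^2}=\la_i^\e u_i$. Bessel's inequality then yields $\sum_i u_i^2(\la_i^\e)^2\le\|\mcA_\e u\|_{L^2}^2$. Conversely, suppose $\sum_i u_i^2(\la_i^\e)^2<\iy$. Testing against $\vp\in C_0^\iy(\Om)$ and using the $H_0^1(\Om;w_\e)$-convergence of the expansion gives $(\mcA_\e u,\vp)=\sum_i u_i\la_i^\e(\Phi_i^\e,\vp)$. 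The series $\sum_i u_i\la_i^\e\Phi_i^\e$ lies in $L^2(\Om)$, so $\mcA_\e u$ equals it as a distribution, and hence $u\in H^2(\Om;w_\e)$. The norm identity follows by combining the two inequalities, or directly from Parseval.

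The only point needing care is the identity $(\mcA_\e u,\Phi_i^\e)_{L^2}=(u,\Phi_i^\e)_{H_0^1(\Om;w_\e)}$ for $u\in H^2(\Om;w_\e)$. It holds by density of $C_0^\iy(\Om)$ in $H_0^1(\Om;w_\e)=H_0^1(\Om)$ together with the distributional definition of $\mcA_\e u\in L^2$. Here the situation is in fact easier than in the degenerate case, because $w_\e$ is bounded above and below and is $C^{2,1}$. I therefore expect no genuine obstacle; the lemma is essentially Lemma \ref{06.28.L1} transcribed.
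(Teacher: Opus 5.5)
Your proposal is correct and follows the paper's approach: the paper's proof of this lemma just says to repeat the argument of Lemma~\ref{06.28.L1} with $w,\mcA,(\la_i,\Phi_i)$ replaced by $w_\e,\mcA_\e,(\la_i^\e,\Phi_i^\e)$, and your steps reproduce that argument (orthonormality and completeness of $\{(\la_k^\e)^{-1/2}\Phi_k^\e\}$ in $H_0^1(\Om;w_\e)$, Parseval, then both directions of the $H^2$ characterization via $(\mcA_\e u,\Phi_i^\e)_{L^2(\Om)}=\la_i^\e u_i$ and testing against $C_0^\iy(\Om)$). Replacing the paper's Cauchy--Schwarz estimate on $(\mcA u,\mcA\vp_n)_{L^2(\Om)}$ with Bessel's inequality is only cosmetic, and your density justification of the pairing identity spells out a step the paper leaves implicit.
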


\begin{proof}
	The proof of this theorem follows the same line of reasoning as that of Lemma  \ref{06.28.L1}.
\end{proof}

 Now, consider the following equation
\begin{equation}\label{11.14.2}
	\begin{cases}
		\pt_{tt}\vp_\e-\Div (w_\e\nabla \vp_\e)=f_\e, &\mbox{in } Q, \\
		\vp_\e=0, &\mbox{on }\pt Q, \\
		\vp_\e=\vp_\e^0, \pt_t\vp(0)=\vp_\e^1, &\mbox{in }\Om,
	\end{cases}
\end{equation}
where $w_\e$ is defined in \eqref{11.14.1}, $\vp_\e^0\in H_0^1(\Om;w_\e)$, $\vp_\e^1\in L^2(\Om)$ and  $f_\e\in L^2(Q)$.
	
\begin{definition}\label{11.14.D1}
	 A function 
	\begin{equation*}
		\vp_\e\in L^2(0,T; H_0^1(\Om;w_\e)), \mbox{ with } \pt_t\vp_\e\in L^2(Q), \ \pt_{tt}\vp_\e\in L^2(0,T; H^{-1}(\Om;w_\e)),
	\end{equation*}
	is said to be a weak solution of \eqref{12.14.1} with respect to $(\vp_\e^0,\vp_\e^1,f_\e)$ provided
	
	(i) for each $\psi_\e\in H_0^1(\Om;w_\e)$ and almost all  $t\in [0,T]$,  the following holds 
	\begin{equation*}
		\lg \pt_{tt}\vp_\e, \psi_\e\rg_{H^{-1}(\Om;w_\e), H_0^1(\Om;w_\e)}+\int_\Om w_\e\nabla \vp_\e \cdot \nabla \psi_\e \df x=\int_\Om f_\e\psi_\e\df x,
	\end{equation*}
	
	(ii) $\vp_\e(0)=\vp_\e^0, \pt_t\vp_\e(0)=\vp_\e^1$.
\end{definition}

 Consequently, we present the following theorem. 
 
\begin{theorem}\label{11.14.T2}
	Let $\e\in (0,R_0)$ be a fixed constant. Given $\vp_\e^0\in H_0^1(\Om;w_\e), \vp_\e^1\in L^2(\Om)$ and $f_\e\in L^2(Q)$,
there exists a unique weak  solution
	\begin{equation}\label{11.14.9}
		\vp_\e\in L^2(0,T; H_0^1(\Om;w_\e)), \mbox{ with } \pt_t\vp_\e\in  L^2(0,T; L^2(\Om)), \ \pt_{tt}\vp\in  L^2(0,T; H^{-1}(\Om;w_\e)).
	\end{equation}
	 Furthermore,
	\begin{equation}\label{12.02.2}
		\f{\pt\vp_\e}{\pt \nu}\in L^2(\pt Q).
	\end{equation}
	 dditionally, the following estimate holds
	\begin{equation}\label{11.14.8}
		\begin{split}
		&\esssup_{t\in [0,T]}\left(\|\vp_\e(t)\|_{H_0^1(\Om;w_\e)}+\|\pt_t\vp_\e(t)\|_{L^2(\Om)}\right)+\|\vp_\e\|_{L^2(0,T; H^{-1}(\Om;w_\e))}+\left\|\f{\pt \vp_\e}{\pt \nu}\right\|_{L^2(\pt Q)}\\
		&\leq C\left(\|\vp_\e^0\|_{H_0^1(\Om;w_\e)}+\|\vp_\e^1\|_{L^2(\Om)}+\|f_\e\|_{L^2(Q)}\right),
		\end{split}
	\end{equation}
	where the constant $C>0$ depends only on $\al, T$ and $\mcO(\pt\Om;3R_0)$.
	
	Moreover, if $f_\e\in H^1(0,T; L^2(\Om)), \vp_\e^0\in D(\mcA_\e), \vp_\e^1\in H_0^1(\Om;w_\e)$, then 
	\begin{equation}\label{11.14.7}
		\vp_\e\in L^2(0,T;D(\mcA_\e)), \mbox{ with } \pt_t\vp_\e \in L^2(0,T; H_0^1(\Om;w_\e)), \ \pt_{tt}\vp_\e\in L^2(Q),
	\end{equation}
	 and the following estimate is valid 
	\begin{equation}\label{11.14.10}
		\begin{split}
			&\esssup_{t\in [0,T]}\left(\|\vp_\e(t)\|_{D(\mcA_\e)}+\|\vp_\e(t)\|_{H^2(\mcO(\pt\Om;2R_0))}+\|\pt_t\vp_\e(t)\|_{H_0^1(\Om;w_\e)}\right)\\
			&\hspace{4.5mm}+\left\|\f{\pt \vp_\e}{\pt \nu}\right\|_{H^1(0,T; L^2(\pt\Om))}\\
			&\leq C\left(\|\vp_\e^0\|_{D(\mcA_\e)}+\|\vp_\e^1\|_{H_0^1(\Om;w_\e)}+\|f_\e\|_{H^1(0,T;L^2(\Om))}\right),
		\end{split}
	\end{equation}
	where the constant $C>0$ depends only on $\al, R_0, T$ and $\mcO(\pt\Om;3R_0)$.
\end{theorem}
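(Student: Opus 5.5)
The plan is to repeat, almost verbatim, the nine-step argument of Theorem \ref{11.14.T1}, with $w$ replaced by $w_\e$, $\mcA$ by $\mcA_\e$, and the eigenpairs $(\la_n,\Phi_n)$ by $(\la_n^\e,\Phi_n^\e)$ from Notation \ref{02.05.N1}. The real task is to check that every constant can be chosen independently of $\e$. Each step uses only a small list of ingredients:
\begin{itemize}
\item the Poincar\'e inequality \eqref{08.31.10}, whose constant $4M^{2-\al}/(N+\al-2)^2$ does not depend on $\e$;
\item compactness (Lemma \ref{11.27.L2});
\item the spectral description (Lemma \ref{02.13.L1});
\item the fact that $w_\e=w$ outside $B_\e\s B(0,R_0)$, so near $\pt\Om$ the coefficient is exactly $|x|^\al$.
\end{itemize}

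\emph{Galerkin part (Steps 1--5).} Set $y^m=\sum_{n\le m}d_n^m\Phi_n^\e$ and solve $(d_n^m)''+\la_n^\e d_n^m=f_{\e,n}$. The energy identity and Gronwall give the bound \eqref{11.30.7} with constant $e^T$. The duality bound on $\pt_{tt}y^m$ in $H^{-1}(\Om;w_\e)$ uses only \eqref{08.31.10}. Weak-$*$ limits, verification of the weak formulation and initial data, and uniqueness via $h(t)=\int_t^s y\,d\tau$ then carry over unchanged. This yields \eqref{11.14.9} and the first three terms of \eqref{11.14.8}; I read the term written as $\|\vp_\e\|_{L^2(0,T;H^{-1})}$ as $\|\pt_{tt}\vp_\e\|$, which is what is actually proved. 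Since $w_\e\ge(3\e/8)^\al>0$ is $C^{2,1}$, one could instead cite classical uniformly hyperbolic theory, but that would give $\e$-dependent constants. The Galerkin route keeps the constants uniform.

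\emph{Regularity (Step 6).} Differentiate the Galerkin system in time to obtain \eqref{09.04.9}, using $\|\mcA_\e\vp_\e^0\|$ and $\|\vp_\e^1\|_{H_0^1(\Om;w_\e)}$. Then use $\|\mcA_\e y^m\|^2=(f^m-\pt_{tt}y^m,\mcA_\e y^m)$. For the boundary $H^2$ bound, localize with the same cutoff $\zeta$ supported in $\mcO(\pt\Om;3R_0)$. On that set $B(0,8R_0)\s\Om$ forces $|x|\ge 5R_0>\e$, so $w_\e=|x|^\al$ there, which is smooth and bounded below by $(5R_0)^\al$. Classical elliptic regularity therefore gives an $\e$-independent constant depending only on $\al,R_0$ and $\mcO(\pt\Om;3R_0)$. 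This yields \eqref{11.14.7} and the first three terms of \eqref{11.14.10}.

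\emph{Hidden regularity (Steps 7--8).} This is the step I expect to be the main obstacle, although here it is mild. First reduce by density to smooth data. Multiply by $\bs n\cdot\nabla\vp_\e$, with $\bs n$ from Lemma \ref{11.30.L2} cut off to be supported in $\ol{\mcO(\pt\Om;2R_0)}$. The integrations by parts are justified by the local $H^2$ bound from Step 6. The resulting terms $\nabla w_\e\cdot\bs n$, $w_\e$ and $\Div\bs n$ live only where $w_\e=w$, so they are bounded independently of $\e$. On $\pt\Om$ one has $w_\e\ge c(\Om)>0$, so the boundary term controls $\|\pt_\nu\vp_\e\|^2_{L^2(\pt Q)}$, giving the last term of \eqref{11.14.8}. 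Applying the same argument to $z=\pt_t\vp_\e$, with $\pt_tz(0)=f_\e(0)-\mcA_\e\vp_\e^0$ and $\|f_\e(0)\|\le C\|f_\e\|_{H^1(0,T;L^2)}$, gives the $H^1(0,T;L^2(\pt\Om))$ bound and completes \eqref{11.14.10}.

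The only point needing care throughout is that no constant picks up the lower bound $(3\e/8)^\al$. Every place where ellipticity is used lies in $\mcO(\pt\Om;3R_0)$, where $w_\e$ coincides with $w$.
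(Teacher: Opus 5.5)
Your proposal is correct and matches the paper's proof. The paper reruns the argument of Theorem~\ref{11.14.T1} with Remark~\ref{08.16.R1} replaced by Lemma~\ref{08.16.L2} and Lemma~\ref{06.28.L1} replaced by Lemma~\ref{02.13.L1}, and notes that every ingredient (Gronwall, the Poincar\'e constant, the spectral description, and the geometry of $\mcO(\pt\Om;3R_0)$) is independent of $\e$. Your explicit check that $|x|>5R_0>\e$ on $\mcO(\pt\Om;3R_0)$, so that $w_\e=|x|^\al$ wherever elliptic regularity and the hidden-regularity multiplier are used, makes precise what the paper leaves implicit in its appeal to ``the geometric condition $\mcO(\pt\Om;3R_0)$''.
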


\begin{proof}
	 The proof of this theorem follows the same line of reasoning as that of Theorem \ref{11.14.T1}.  
	 
	 {{We observe that all the estimates in Theorem \ref{11.14.T1}, namely \eqref{11.30.7}, \eqref{11.30.8}, \eqref{12.01.2}, \eqref{02.13.3}, \eqref{09.07.16}, and \eqref{12.02.1}, rely only on Gronwall’s inequality, Remark \ref{08.16.R1}, Lemma \ref{06.28.L1}, \cite[Theorem 2 (iii), Chapter 5.9.2, p. 302]{Evans}, and the geometric condition $\mathcal{O}(\partial\Omega;3R_0)$.
	 
	 For each $\varepsilon \in (0,R_0)$, Remark \ref{08.16.R1} is replaced by Lemma \ref{08.16.L2}, and Lemma \ref{06.28.L1} is replaced by Lemma \ref{02.13.L1}. All these ingredients are independent of $\varepsilon \in (0,R_0)$. This proves Theorem \ref{11.14.T2}. }}
\end{proof}

\begin{corollary}\label{11.21.C1}
	The function
	\begin{equation*}
		\vp_\e\in L^2(0,T; H_0^1(\Om;w_\e)), \mbox{ which satisfies
 } \pt_t\vp_\e\in L^2(Q), \ \pt_{tt}\vp_\e\in L^2(0,T; H^{-1}(\Om;w_\e))
	\end{equation*}
	is the weak solution of \eqref{11.14.2}  corresponding to the initial data
  $(\vp_\e^0,\vp_\e^1, f_\e)$ if and only if
	\begin{equation*}
		\vp_\e\in L^2(0,T; H_0^1(\Om;w_\e)), \mbox{ with } \pt_t\vp_\e\in L^2(Q)
	\end{equation*}
	 fulfills the following condition
	\begin{equation}\label{11.21.1}
		\begin{split}
			&\iint_Q \vp_\e(t)  \pt_{tt}\psi(t) \df x\df t+\iint_Q w_\e\nabla \vp_\e\cdot \nabla \psi\df x\df t\\
			&=\iint_Q f_\e\psi\df x\df t+\int_\Om \vp_\e^1\psi(0)\df x-\int_\Om \vp_\e^0\pt_t\psi(0)\df x
		\end{split}
	\end{equation}
	for all test functions  $\psi\in C^\iy(\ol Q)$ such that  $\supp\psi(t)\s \Om$ for all $t\in [0,T]$ and $\psi(T)=\pt_t\psi(T)=0$.
\end{corollary}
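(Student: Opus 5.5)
The plan is to repeat, almost verbatim, the argument of Lemma \ref{11.18.L1} with $w$, $\mcA$, $H_0^1(\Om;w)$, $H^{-1}(\Om;w)$ replaced by $w_\e$, $\mcA_\e$, $H_0^1(\Om;w_\e)=H_0^1(\Om)$, $H^{-1}(\Om;w_\e)=H^{-1}(\Om)$. The only ingredients of that proof that depend on the weight are the Poincar\'e inequality, the spectral representation, and the density of $C_0^\iy(\Om)$ in $H_0^1$. For $w_\e$ these are supplied by Lemma \ref{08.16.L2}, Lemma \ref{02.13.L1} and the identity $H_0^1(\Om;w_\e)=H_0^1(\Om)$ respectively.

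\emph{Necessity.} Take a weak solution $\vp_\e$ in the sense of Definition \ref{11.14.D1}. From $\vp_\e\in L^2(0,T;H_0^1(\Om))$, $\pt_t\vp_\e\in L^2(Q)$ and $\pt_{tt}\vp_\e\in L^2(0,T;H^{-1}(\Om))$ we get $\vp_\e\in C([0,T];L^2(\Om))$ and $\pt_t\vp_\e\in C([0,T];H^{-1}(\Om))$. Now test the identity in (i) with $\psi(t)$, where $\psi$ is an admissible test function, and integrate over $(0,T)$. Integrating twice by parts in $t$ and using $\psi(T)=\pt_t\psi(T)=0$ together with the initial conditions (ii) gives \eqref{11.21.1}.

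\emph{Sufficiency.} Assume \eqref{11.21.1} holds.
\begin{enumerate}
\item Since $\|\mcA_\e\vp_\e\|_{H^{-1}(\Om;w_\e)}\le\|\vp_\e\|_{H_0^1(\Om;w_\e)}$, we have $\mcA_\e\vp_\e\in L^2(0,T;H^{-1}(\Om;w_\e))$. Lemma \ref{08.16.L2} gives $f_\e\in L^2(0,T;H^{-1}(\Om;w_\e))$.
\item Testing with $\xi\in C_0^\iy(Q)$ yields $\pt_{tt}\vp_\e=f_\e-\mcA_\e\vp_\e$ in $\mathcal D'(Q)$. Hence $\pt_{tt}\vp_\e$ has the regularity required in Definition \ref{11.14.D1}.
\item Use test functions $\psi=\zeta(t)v(x)$ with $\zeta$ a cutoff concentrating near a fixed $t$. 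By the Lebesgue differentiation theorem, (i) holds for all $v\in C_0^\iy(\Om)$ and almost every $t$. Density of $C_0^\iy(\Om)$ in $H_0^1(\Om)$ then extends (i) to all of $H_0^1(\Om;w_\e)$.
\item For the initial data, expand $\vp_\e=\sum_n\vp_{\e,n}(t)\Phi_n^\e$ using Notation \ref{02.05.N1}. Take $\psi=\zeta(t)v$ with $v\to\Phi_n^\e$ in $H_0^1(\Om)$. This shows that each coefficient $\vp_{\e,n}$ is a weak solution of $\vp_{\e,n}''+\la_n^\e\vp_{\e,n}=f_{\e,n}$ with data $(\vp_\e^0,\Phi_n^\e)_{L^2(\Om)}$ and $(\vp_\e^1,\Phi_n^\e)_{L^2(\Om)}$.
\item The Galerkin construction and the uniqueness argument of Theorem \ref{11.14.T2} then show that the partial sums converge to the solution with data $(\vp_\e^0,\vp_\e^1,f_\e)$. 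Since the partial sums also converge to $\vp_\e$ in $L^2(Q)$, the limit is $\vp_\e$, which gives (ii).
\end{enumerate}

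The main obstacle is step 4, the identification of the initial data. The weak formulation only encodes $\vp_\e(0)$ and $\pt_t\vp_\e(0)$ through boundary terms in $t$, so one must pass through the modal decomposition and uniqueness. For fixed $\e$ this is in fact easier than in the degenerate case, because the operator is uniformly elliptic. A shorter alternative is also available for step 5: subtract \eqref{11.21.1} from the identity obtained by integrating (i) by parts, using test functions with arbitrary $\psi(0)$ and $\pt_t\psi(0)$.
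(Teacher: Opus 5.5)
Your proposal is correct and follows the same route as the paper. The paper's proof only says that the argument of Lemma \ref{11.18.L1} carries over, and you transplant that argument step by step, using Lemma \ref{08.16.L2}, Lemma \ref{02.13.L1} and $H_0^1(\Om;w_\e)=H_0^1(\Om)$ for the weight-dependent ingredients. Your shorter alternative for the initial data is also valid: compare \eqref{11.21.1} with the identity obtained by integrating (i) by parts against test functions with prescribed $\psi(0)$ and $\pt_t\psi(0)$. This is the same device the paper uses in Step 4 of Theorem \ref{11.14.T1}.
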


\begin{proof}
	 he proof of this corollary closely parallels that of Lemma \ref{11.18.L1}.
\end{proof}

\begin{lemma}\label{02.05.L1}
	Let $\e\in (0,R_0)$. Then $C_0^\iy(\Om)\s D(\mcA_\e)$, and for each $u\in C_0^\iy(\Om)$ 
	\begin{equation*}
		\int_\Om (\mcA_\e u)^2\df x\leq C,
	\end{equation*}
$C$  is independent of  $\e \in (0,R_0)$, but depends on $u$. 
\end{lemma}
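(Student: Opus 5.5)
The plan is to compute $\mcA_\e u$ explicitly for $u\in C_0^\iy(\Om)$ and bound it pointwise, uniformly in $\e$. Since $w_\e=\psi_\e^\al$ and $\psi_\e\in C^{2,1}$ with $\psi_\e\ge 3\e/8>0$ (Lemma \ref{12.17.L1}), $w_\e$ is $C^1$ (indeed $C^2$) and bounded above and below by positive constants for each fixed $\e$. Therefore $H_0^1(\Om;w_\e)=H_0^1(\Om)$ and $u\in H_0^1(\Om;w_\e)$. Moreover $\mcA_\e u=-w_\e\De u-\nabla w_\e\cdot\nabla u$ is continuous and compactly supported, hence lies in $L^2(\Om)$. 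This gives $u\in H^2(\Om;w_\e)\cap H_0^1(\Om;w_\e)=D(\mcA_\e)$.

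For the uniform bound, I will use \eqref{11.25.1} to write
\begin{equation*}
\mcA_\e u=-\psi_\e^\al\De u-\al\psi_\e^{\al-1}\nabla\psi_\e\cdot\nabla u .
\end{equation*}
The first term is easy. Since $\psi_\e\le \max\{\e,|x|\}\le M$ on $\Om$ and $\al>0$, we get $|\psi_\e^\al\De u|\le M^\al\sup|\De u|$.

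The second term is the only place where $\e$-uniformity is delicate, because $\al-1$ may be negative and $\psi_\e$ becomes small near $0$. I will split into two regions.
\begin{itemize}
\item On $\Om\se B_\e$: here $\psi_\e=|x|$ and $|\nabla\psi_\e|=1$ by \eqref{12.18.2}, so the term is bounded by $\al|x|^{\al-1}\sup|\nabla u|$.
\item On $B_\e$: by \eqref{12.18.1}, $|\nabla\psi_\e|=\frac{|x|}{2\e}\left(3-\frac{|x|^2}{\e^2}\right)\le \frac{3|x|}{2\e}\le \frac32$. Also $|x|\le\psi_\e$ and $3\e/8\le\psi_\e\le\e$. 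If $\al\ge1$, then $\psi_\e^{\al-1}\le \e^{\al-1}\le M^{\al-1}$. If $\al<1$, then $\psi_\e^{\al-1}\le |x|^{\al-1}$. In either case the term is bounded by $C(\al,M)\bigl(1+|x|^{\al-1}\bigr)\sup|\nabla u|$.
\end{itemize}
Combining the two regions gives, uniformly in $\e$,
\begin{equation*}
|\mcA_\e u(x)|\le C(\al,M)\bigl(\sup|\De u|+(1+|x|^{\al-1})\sup|\nabla u|\bigr).
\end{equation*}

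Squaring and integrating, exactly as in Lemma \ref{12.01.L1}, reduces everything to the integral $\int_\Om |x|^{2\al-2}\df x\le C\int_0^M r^{2\al+N-3}\df r$. This is finite because $2\al+N-2>0$ for $N\ge2$ and $\al>0$. The resulting constant depends on $u$, $\al$, $M$, $|\Om|$ and $N$, but not on $\e\in(0,R_0)$.
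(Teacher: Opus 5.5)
Your proof is correct and follows essentially the paper's argument. You expand $\mcA_\e u=-\psi_\e^\al\De u-\al\psi_\e^{\al-1}\nabla\psi_\e\cdot\nabla u$, split $\Om$ at $B_\e$, and use the bounds of Lemma~\ref{12.17.L1}; the one difference is on $B_\e$, where the paper uses $\psi_\e\ge 3\e/8$ and $|B_\e|\le C\e^N$ to get a contribution of order $\e^{N+2\al-2}$ (quoting Lemma~\ref{12.01.L1} for the rest) while you use $\psi_\e\ge|x|$ to get the $\e$-independent majorant $|x|^{\al-1}$, and both rest on the same condition $N+2\al-2>0$.
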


\begin{proof}
	For any $u\in C_0^\iy(\Om)$,  by Lemma \ref{12.01.L1}, Remark \ref{06.07.R1}, and the fact that $w_\e=w$ on $\Om-B(0,\e)$,  we know that
 $u\in H_0^1(\Om;w_\e)$. Then,  
	\begin{equation*}
		\begin{split}
			&\int_\Om (\mcA_\e u)^2\df x\\
			&=\int_\Om \left[\Div(\psi_\e^\al \nabla u)\right]^2\df x\leq 2\int_{B_\e} \left[\Div(\psi_\e^\al \nabla u)\right]^2\df x+2\int_\Om \left[\Div(|x|^\al \nabla u)\right]^2\df x\\
			&\leq 2\int_{B_\e} \left[\psi_\e^\al \De u+\al\psi_\e^{\al-1}\left(\f{3}{2\e}x-\f{1}{2\e^3}|x|^2x\right)\cdot\nabla u\right]^2\df x+2\int_\Om [\Div(|x|^\al \nabla u)]^2\df x\\
			&\leq C\e^{N+2\al}\sup_{x\in\Om}|\De u|^2+C\e^{N+2\al-2}\sup_{x\in\Om}|\nabla u|^2+2\int_\Om [\Div(|x|^\al \nabla u)]^2\df x\leq C<+\iy,
		\end{split}
	\end{equation*}
	where   the constant $C>0$ depends  only on $\al, M$ and $|\Om|$ and $u$.
\end{proof}

 We now prove one of the main results of this paper. i.e., Theorem \ref{11.20.T2}.

\vspace{0.2cm}

\noindent {\bf {Proof of Theorem \ref{11.20.T2}.}} 
	 We will establish this theorem through the following three  steps.

	{\it Step 1: The approximations (weak convergence) in  \eqref{11.21.2}.}

	 Note that for any  $u\in H_0^1(\Om)$,  the following inequalities hold 
	\begin{equation}\label{02.05.1}
		\int_\Om w_\e(\nabla u\cdot \nabla u)\df x\leq M^\al \int_\Om |\nabla u|^2\df x, \quad \int_\Om w(\nabla u\cdot \nabla u)\df x\leq M^\al \int_\Om |\nabla u|^2\df x.
	\end{equation}
	 This implies that
 $u\in H_0^1(\Om;w_\e)$ for all $\e\in (0,R_0)$, and $ u\in H_0^1(\Om;w)$.
	
	 By combining \eqref{11.14.8} with \eqref{02.05.1}, we obtain
	\begin{equation}\label{02.05.2}
		\begin{split}
		\|\vp_\e\|_{L^2(0,T; H_0^1(\Om;w_\e))}+\|\pt_t\vp_\e\|_{L^2(Q)}
		&\leq C\left(\|f\|_{L^2(Q)}+\|\vp^0\|_{H_0^1(\Om;w_\e)}+\|\vp^1\|_{L^2(\Om)}\right)\\
		&\leq C\left(\|f\|_{L^2(Q)}+\|\vp^0\|_{H_0^1(\Om)}+\|\vp^1\|_{L^2(\Om)}\right),
		\end{split}
	\end{equation}
	where the constant $C>0$ depends only on $\al, R_0, T$ and $\mcO(\pt\Om;3R_0)$.   It is worth noting that 
	\begin{equation*}
		\int_\Om (\nabla \vp_\e\cdot \nabla \vp_\e)w\df x\leq \int_\Om (\nabla \vp_\e\cdot\nabla \vp_\e)w_\e\df x
	\end{equation*}
	 due to the fact that  $w\leq w_\e$ for all $\e\in (0,R_0)$.  Consequently, we have 
	\begin{equation*}
	\|\vp_\e\|_{L^2(0,T; H_0^1(\Om;w))}+\|\pt_t\vp_\e\|_{L^2(Q)}\leq C\left(\|f\|_{L^2(Q)}+\|\vp^0\|_{H_0^1(\Om)}+\|\vp^1\|_{L^2(\Om)}\right),
	\end{equation*}
	 where the positive constant $C$  again depends solely on $\al, R_0, T$, and $\mcO(\pt\Om;3R_0)$.
 This leads to the conclusion that there exists a subsequence of $\{\vp_\e\}_{\e>0}$, which we still denote by the same notation, and a function
$\wt\vp\in L^2(0,T; H_0^1(\Om;w))$ with $\pt_t\wt\vp\in L^2(Q)$, such that
	\begin{equation}\label{11.21.3}
		\begin{split}
			\vp_\e
			&\ra \wt \vp \mbox{ weakly in } L^2(0,T; H_0^1(\Om;w)), \\
			\pt_t\vp_\e
			&\ra \pt_t\wt \vp \mbox{ weakly in }L^2(Q).
		\end{split}
	\end{equation}
	
	{\it Step 2: $\wt\vp$ is a solution \eqref{12.14.1} with respect to $(\vp^0,\vp^1,f)$, that is, $\wt\vp=\vp$.}
 
  For this very reason, we shall employ \eqref{11.21.1} to establish \eqref{11.18.3}.

Let $\psi\in C^\iy(\ol\Om)$ be such that  $\supp \psi(t)\s \Om$ for all $t\in [0,T]$ and $\psi(T)=\pt_t\psi(T)=0$.  By letting
$\e\ra 0$ 
 and considering the first convergence in \eqref{11.21.3}, we deduce that the limit of the first term in \eqref{11.21.1} is
	\begin{equation*}
		\iint_Q \vp_\e(t)\pt_{tt}\psi(t)\df x\df t\ra \iint_Q \wt\vp(t)\pt_{tt}\psi(t)\df x\df t,
	\end{equation*}
	 Since the right-hand side of \eqref{11.21.1} remains unchanged for
 $\e>0$, it suffices to demonstrate that
	\begin{equation}\label{11.21.4}
		\iint_Q w_\e\nabla\vp_\e\cdot\nabla\psi\df x\df t\ra \iint_Q w\nabla\wt\vp\cdot \nabla\psi\df x\df t
	\end{equation}
	as $\e\ra 0$  by extracting a subsequence.

Now, utilizing \eqref{11.21.4} and \eqref{11.14.1} (which states
 $w_\e=w$ on $\Om-B_\e$), we obtain
	\begin{equation}\label{11.21.5}
		\begin{split}
		\iint_Q w_\e \nabla\vp_\e\cdot\nabla\psi\df x\df t
		&=\iint_Q w\nabla \vp_\e\cdot \nabla\psi\df x\df t+\iint_{B_\e\ts (0,T)}(w_\e-w)\nabla \vp_\e\cdot \nabla \psi\df x\df t
		\end{split}
	\end{equation}
	for all $\e>0$.  From \eqref{11.21.3}, we have
	\begin{equation*}
		\iint_Q w\nabla\vp_\e\cdot\nabla\psi\df x\df t\ra \iint_Q w\nabla\wt \vp\cdot\nabla\psi\df x\df t
	\end{equation*}
	 $\e\ra 0$,  Combining this with \eqref{11.21.4} and \eqref{11.21.5}, we conclude that it is only necessary to show
	\begin{equation}\label{11.21.6}
		\iint_{B_\e\ts (0,T)} (w_\e-w)\nabla\vp_\e\cdot\nabla \psi\df x\df t\ra 0
	\end{equation}
	as $\e\ra 0$.
	
	 Noting from \eqref{02.05.1} and \eqref{02.05.2}, we derive
	\begin{equation*}
		\begin{split}
			&\left|\iint_{B_\e\ts (0,T)}w_\e \nabla\vp_\e\cdot\nabla\psi\df x\df t\right|\\
			&\leq \left(\iint_Q w_\e\nabla\vp_\e\cdot \nabla\vp_\e\df x\right)^\f{1}{2}\left(\iint_{B_\e\ts (0,T)}w_\e\nabla \psi\cdot \nabla\psi\df x\df t\right)^\f{1}{2}\\
			&\leq C\left(\|\vp^0\|_{H_0^1(\Om)}+\|\vp^1\|_{L^2(\Om)}+\|f\|_{L^2(Q)}\right)\|\nabla\psi\|_{L^\iy(Q)}\sqrt{T}\left(w_\e(B_\e)\right)^\f{1}{2}\\
			&\leq C\left(\|\vp^0\|_{H_0^1(\Om)}+\|\vp^1\|_{L^2(\Om)}+\|f\|_{L^2(Q)}\right)\|\nabla\psi\|_{L^\iy(Q)}\sqrt{T}  \e^\f{N+\al}{2},
		\end{split}
	\end{equation*}
	 where the last inequality employed 
	\begin{equation*}
		w_\e(B_\e)=\int_{B_\e} w_\e \df x\leq   \e^\al |B_\e|\leq C\e^{N+\al}
	\end{equation*}
	 by \eqref{11.21.8},  and the constant $C>0$ depends only on $\al, R_0, T$ and $\mcO(\pt\Om;3R_0)$.

	Similarly, we can show that 
	\begin{equation*}
		\begin{split}
			\left|\iint_{B_\e\ts (0,T)} w\nabla\vp_\e\cdot \nabla\psi\df x\df t\right|
			&\leq C\left(\|\vp^0\|_{H_0^1(\Om)}+\|\vp^1\|_{L^2(\Om)}+\|f\|_{L^2(Q)}\right)\|\nabla\psi\|_{L^\iy(Q)}\sqrt{T}  \e^\f{N+\al}{2},
		\end{split}
	\end{equation*}
	 where we have utilized 
	\begin{equation*}  w(B_\e)=\int_{B_\e}w\df x\leq \e^\al |B_\e|\leq C\e^{N+\al}
	\end{equation*}
	by \eqref{11.21.8} and Lemma \ref{12.17.L1}, and the constant $C>0$ depends only on $\al, R_0, T$ and $\Om$.
	
 Combining these results with \eqref{11.21.6}, we establish \eqref{11.18.3}, thereby confirming that
 $\wt\vp=\vp$. This completes the proof of \eqref{11.21.2}.
	
	{\it Step 3}.   Now, we make the assumptions that
$\vp^0\in C_0^\iy(\Om),\vp^1\in H_0^1(\Om)$, and $f\in H^1(0,T; L^2(\Om))$.
	
From \eqref{11.14.10}, Lemma \ref{12.01.L1}, and Lemma \ref{02.05.L1}, we can derive
	\begin{equation*}
		\begin{split}
		\|\vp_\e\|_{L^2(0,T; H^2(\mcO(\pt\Om;2R_0)))}
		&\leq C\left(\|\vp^0\|_{D(\mcA_\e)}+\|\vp^1\|_{H_0^1(\Om)}+\|f\|_{H^1(0,T; L^2(\Om))}\right)\leq C,
		\end{split}
	\end{equation*}
	where the constant $C>0$ is independent of $\e\in (0,R_0)$.  By applying the standard Sobolev trace theorem, we obtain 
	\begin{equation*}
		\begin{split}
			\left\|\f{\pt \vp_\e}{\pt\nu}\right\|_{L^2(0,T; H^\f{1}{2}(\pt\Om))}\leq C.
		\end{split}
	\end{equation*}
	 Consequently, there exists a function  $z\in L^2(0,T; H^\f{1}{2}(\pt\Om))$ such that
	\begin{equation}\label{02.05.3}
		\begin{split}
		\vp_\e
			&\ra \vp \mbox{ weakly in } L^2(0,T; H^2(\mcO(\pt\Om;2R_0))), \\
		\f{\pt \vp_\e}{\pt\nu}
		&\ra z \mbox{ weakly in } L^2(0,T; H^\f{1}{2}(\pt\Om)).
		\end{split}
	\end{equation}
	For any $\psi\in C^\iy(\pt Q)$, we can extend $\psi$ to a $C^\iy$ function on $\ol\Om$, still denote by $\psi$, such that  $\supp \psi\s \mcO(\pt\Om;2R_0)$. 
Then, utilizing \eqref{11.21.3}, $\wt\vp=\vp$, \eqref{02.05.3}, and
 $w_\e=w$ on $\mcO(\pt\Om;2R_0)$, we  have
	\begin{equation*}
		\begin{split}
			\iint_{\pt Q} w_\e\f{\pt\vp_\e}{\pt\nu}\psi\df S\df t
			&=\iint_{\pt Q} \Div(\psi w_\e\nabla\vp_\e )\df S\df t\\
			&=\iint_Q \Div(w_\e\nabla\vp_\e)\psi\df x\df t+\iint_Q w_\e\nabla\vp_\e\cdot\nabla\psi\df x\df t\\
			&\ra \iint_Q \Div(w\nabla\vp)\psi\df x\df t+\iint_Q w\nabla\vp\cdot \nabla\psi\df x\df t=\iint_{\pt Q} w\f{\pt\vp}{\pt\nu}\psi\df S\df t,
		\end{split}
	\end{equation*}
	hence, $z=\f{\pt\vp}{\pt\nu}$. i.e.,
	\begin{equation}\label{02.05.4}	\f{\pt\vp_\e}{\pt\nu}\ra \f{\pt\vp}{\pt\nu} \mbox{ weakly in }L^2(0,T; H^\f{1}{2}(\pt\Om)).
	\end{equation}
	
	Since   $C_0^\iy(\Om)\s D(\mcA_\e)$ for all $\e\in (0,R_0)$ by Lemma \ref{02.05.L1}, and  $\eta_\e=\pt_t\vp_\e$  satisfies the following equation (it suffices to reprove Theorem \ref{11.14.T1})
 	\begin{equation*}
		\begin{cases}
			\pt_{tt}\eta_\e +\mcA_\e\eta_\e=\pt_t f, &\mbox{in }Q, \\
			\eta_\e=0, &\mbox{on }\pt Q, \\
			\eta_\e(0)=\vp^1, \pt_t\eta_\e(0)=f(0)-\mcA_\e\vp^0, &\mbox{in }\Om,
		\end{cases}
	\end{equation*}
	 then, by Lemma \ref{02.05.L1} and \eqref{11.14.8}, we get
	\begin{equation}\label{02.05.5}
		\left\|\pt_t\f{\pt \vp_\e}{\pt\nu}\right\|_{L^2(\pt Q)}=\left\|\f{\pt\eta_\e}{\pt\nu}\right\|_{L^2(\pt Q)}\leq C<+\iy,
	\end{equation}
	where the constant $C>0$ is independent of $\e\in (0,R_0)$.
	 
Therefore,  the sequence $\{\f{\pt\vp_\e}{\pt\nu}\}_{\e\in (0,R_0)}$
is  bounded in $L^2(0,T; H^\f{1}{2}(\pt\Om))$ by \eqref{02.05.4},  and  $\{\pt_t\f{\pt\vp_\e}{\pt\nu}\}_{\e\in (0,R_0)}$ is bounded in $L^2(\pt Q)$ by \eqref{02.05.5}.   Combining these facts with the compactness of the embeddings   $H^{\frac{1}{2}}(\partial \Omega) \hookrightarrow L^2(\partial \Omega)$ and   
\begin{equation*}
		\left\{\xi\in L^2(0,T; H^\f{1}{2}(\pt\Om))\colon \pt_t\xi \in L^2(\pt Q)\right\}\hra L^2(\pt Q),
	\end{equation*}
	we can establish \eqref{11.21.7} through an abstract subsequence argument. This completes the proof of this theorem.
\hfill $\Box$ 

\begin{remark}
	{{Theorem \ref{11.20.T2} is far from being a trivial consequence. Its proof relies essentially on Theorems \ref{11.14.T1} and \ref{11.14.T2}, whose estimates are uniform with respect to the parameter $\varepsilon \in (0,R_0)$.
	
	A major difficulty in the proof of Theorem \ref{11.20.T2} lies in the fact that the weighted Sobolev spaces (i.e., the solution spaces) depend on $\varepsilon \in (0,R_0)$. This dependence creates substantial technical complications. To overcome this issue, we must embed the solutions $\varphi_\varepsilon$ into a fixed space, namely $H_0^1(\Omega;w)$, in order to establish the convergence stated in \eqref{11.21.3}. Consequently, the choice of the approximating spaces $H_0^1(\Omega;w_\varepsilon)$ is crucial: it determines whether the convergence in \eqref{11.21.3} can be achieved.}}
	
\end{remark}

\section{Observability}\label{S4}
 In this section, our focus is on deriving the observability inequality for equation \eqref{12.14.1}. To achieve this, we initially demonstrate a uniform observability result pertaining to the approximate equation \eqref{11.14.2}. 
 
We set  $\e\in (0,R_0)$  throughout this section.

\subsection{Uniformly observability for  the approximate equations}

Throughout this section, we adhere to the standard notations commonly employed in differential geometry. For instance, we also adopt the notatione $\pxi$ to  represent $\f{\pt}{\pt x_i}$, among others.

We commence by providing a description of the differential operator that acts on functions defined on
$\Omega \subset \mathbb{R}^N$.

\begin{notation}
Let $\R^n$  be endowed with its usual topology, and let $x=(x^1,\cdots, x^N)$ 
 denote the natural coordinate system. We define the standard product and norm on  
  $\R^N$ as follows: 
\begin{equation*}
	X\cdot Y=\lg X, Y\rg_0=\sum_{i=1}^N X^iY^i, \quad |X|_0=(X\cdot X)^\f{1}{2},
\end{equation*}
where $X=\sum_{i=1}^NX^i\pxi, Y=\sum_{i=1}^N Y^i\pxi\in\R_x^N$ and $\R_x^N$  represents the tangent space at the point 
 $x\in\R^N$. For a given  $x\in\R^N$, we define  $A_\e(x)=w_\e(x) I_N$,  whereupon we have
\begin{equation}\label{11.28.2}
	A_\e X=\sum_{i=1}^N w_\e X^i\pxi, \mbox{ for all } X=\sum_{i=1}^N X^i\pxi\in\R_x^N,
\end{equation}
with  $I_N$ denoting the identity matrix of order $N$.

Let $X=\sum_{i=1}^N X^i\pxi$ be a vector field on $\R^N$. We denote the divergence of  $X$ 
 with respect to the Euclidean metric by 
\begin{equation*}
	\Div_0(X)=\sum_{i=1}^N\f{\pt X^i}{\pt x^i}.
\end{equation*}
For each $f\in C^1(\ol\Om)$, we define the gradient of $f$ by
\begin{equation}\label{11.26.2}
	\nabla f=\nabla_0 f=\sum_{i=1}^N \f{\pt f}{\pt x^i} \pxi.
\end{equation}
\end{notation}

 Subsequently, we proceed to define the differential operator on the Riemannian manifold $\Omega$.

\begin{notation}Set
	\begin{equation}\label{11.23.1}
		g_\e(x)=(g_{ij}^\e)_{i,j=1,\cdots, N}=w_\e^{-1}I_N=A_\e(x)^{-1} \mbox{ for all } x\in \Om,
	\end{equation}
	that is,  $g_{ij}^\e=w_\e^{-1}\de_{ij}$ for all $i,j=1,\cdots, N$,
	where  $\de_{ij}$ denotes the Kronecker symbol (refer to  Remark \ref{06.07.R1}). Then,
	\begin{equation}\label{11.23.2}
		g_\e^{-1}=(g_\e^{ij})_{i,j=1,\cdots, N}=w_\e I_N,
	\end{equation}
	namely, $g_\e^{ij}=w_\e \de^{ij}$.  It can be readily verified that 
	\begin{equation}\label{11.22.1}
		\f{\pt g_{ij}^\e}{\pt x^k}=(-\al)\psi_\e^{-\al-1}\f{\pt\psi_\e}{\pt x^k}\de_{ij}, \mbox{ and } \f{\pt g_\e^{ij}}{\pt x^k}=\al \psi_\e^{\al-1}\f{\pt\psi_\e}{\pt x^k}\de^{ij}
	\end{equation}
	 by virtue of \eqref{11.14.1}, \eqref{11.25.1}, and \eqref{11.23.2}. 
	
For each $x\in\R^N$,  define the inner product and norm on the tangent space $\R_x^N=\R^N$ as follows: 
\begin{equation}\label{11.25.7}
	g_\e(X,Y)=\lg X, Y\rg_{g_\e}=\sum_{i,j=1}^N g_{ij}^\e X^iY^j,\quad  |X|_{g_\e}=\lg X, X\rg_{g_\e}^\f{1}{2}=\left(\sum_{i=1}^N w_\e^{-1}(X^i)^2\right)^\f{1}{2},
\end{equation}
where $X=\sum_{i=1}^NX^i\pxi, Y=\sum_{i=1}^NY^i\pxi\in\R_x^N$. Moreover,
\begin{equation}\label{11.24.5}
	\llg \pxi, \pxj\rrg_{g_\e}=w_\e^{-1}\de_{ij} \mbox{ for all } i,j=1,\cdots, N,
\end{equation}
and
\begin{equation}\label{11.28.1}
	\lg X, A_\e(x)Y\rg_{g_\e}=X\cdot Y \mbox{ for all } x\in\R^N.
\end{equation}

   We directly confirm that $(\Om, g_\e)$ constitutes a  $C^2$ Riemannian manifold. Its Christoffel symbol $\Ga_{ij}^k=\f{1}{2}\sum_{l=1}^N g_\e^{kl}(\f{\pt g_{li}^\e}{\pt x^j}+\f{\pt g_{lj}^\e}{\pt x^i}-\f{\pt g_{ij}^\e}{\pt x^l})\ (i,j,k=1,\cdots, N)$ is
\begin{equation}\label{11.24.9}
	\Ga_{ij}^k=-\f{\al}{2}\psi_\e^{-1}\left(\f{\pt\psi_\e}{\pt x^j}\de_{ik}+\f{\pt\psi_\e}{\pt x^i}\de_{jk}-\f{\pt\psi_\e}{\pt x^k}\de_{ij}\right)
\end{equation}
 according to \eqref{11.23.2} and \eqref{11.22.1}. 

 The gradient of a function  $u$ on  $(\Om,g_\e)$ is expressed as 
\begin{equation}\label{11.24.3}
	\nabla_{g_\e} u=\sum_{i=1}^N g_\e^{ij}\f{\pt u}{\pt x^i}\f{\pt}{\pt x^j}=\sum_{i=1}^N w_\e \f{\pt u}{\pt x^i}\pxi.
\end{equation}
 Then, from \eqref{11.24.5}, we obtain
\begin{equation}\label{11.24.4}
	\lg \nabla_{g_\e}u, \nabla_{g_\e}v\rg_{g_\e}=\sum_{i,j=1}^N w_\e^2\f{\pt u}{\pt x^i}\f{\pt v}{\pt x^j}\llg \pxi,\pxj\rrg_{g_\e}=\sum_{i=1}^N w_\e\f{\pt u}{\pt x^i}\f{\pt v}{\pt x^i}=(\nabla_0 u\cdot \nabla_0v)w_\e.
\end{equation}

The divergence of a vector field $X=\sum_{i=1}^N X^i\pxi$ on $(\Om,g_\e)$ is given by
\begin{equation*}
	\begin{split}
		\Div_{g_\e} X=\sum_{i=1}^N\f{\pt X^i}{\pt x^i}+\sum_{i,k=1}^N X^i\Ga_{ik}^k=\sum_{i=1}^N \f{\pt X^i}{\pt x^i}-\f{\al N}{2} \psi_\e^{-1} \sum_{i=1}^N X^i\f{\pt\psi_\e}{\pt x^i}.
	\end{split}
\end{equation*}
Hence
\begin{equation}\label{11.24.1}
	\mcA u=-\sum_{i=1}^N \pxi\left(w_\e(x)\f{\pt u}{\pt x_i}\right)=-\Div_0(\nabla_{g_\e}u).
\end{equation}

Denote the Levi-Civita connection in metric $g_\e$ by $D$, that is, 
\begin{equation}\label{11.24.8}
	D_{\pxi}\pxj=\sum_{k=1}^N\Ga_{ij}^k \pxk \mbox{ for all } i,j=1,\cdots, N.
\end{equation}
In other words,  for each vector field $H$ on Riemannian manifold $(\Om,g_\e)$, we have
\begin{equation}\label{11.24.7}
	DH(X,Y)=\lg D_X H, Y\rg_{g_\e} \mbox{ for all } X,Y\in\R_x^N.
\end{equation}
\end{notation}

\begin{remark}\label{11.24.R1}
	 On the domain  $\Om\s\R^N$,  there exist two distinct metrics: one is the well-known 
standard Euclidean metric, and the other is the Riemannian metric
$g_\e$ (where $0<\e<R_0$ is fixed).

In the subsequent discussion, we use the notation
$\nabla f$ represent $ \nabla_0 f$ (refer to \eqref{11.26.2}), and employ 
$\nabla_{g_\e}$  to denote the gradient on the Riemannian manifold
$(\Omega, g_\e)$. 
\end{remark}

 In the following sections, we will present several crucial conclusions.  We note that equations \eqref{11.14.2} are uniformly hyperbolic for all $\varepsilon \in (0,R_0)$. Consequently, integrations by parts are fully justified.

\begin{lemma}\label{11.24.L1}
	Let $H$ be a $C^1$ vector field defined on the Riemannian  manifold $(\Om,g_\e)$, and let $f\in C^2(\Om)$. Then 
for all $x\in\R^N$, the following equation holds:
	\begin{equation*}
		\begin{split}
			&\lg \nabla_{g_\e} f, \nabla_{g_\e} (H(f))\rg_{g_\e}(x)\\
			&=DH(\nabla_{g_\e}f, \nabla_{g_\e}f)(x)+\f{1}{2}\Div_0\left(|\nabla_{g_\e} f|_{g_\e}^2H\right)(x)-\f{1}{2}|\nabla_{g_\e}f|_{g_\e}^2(x)\Div_0(H)(x). 
		\end{split}
	\end{equation*} 
\end{lemma}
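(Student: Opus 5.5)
The plan is to run the standard computation of the geometric multiplier method, using only that $D$ is the Levi-Civita connection of $g_\e$: metric compatibility, torsion-freeness, and the defining property of the gradient, $\lg \nabla_{g_\e} f, X\rg_{g_\e}=X(f)$ for every vector field $X$. The identity is pointwise, so I fix $x\in\Om$. Set $X=\nabla_{g_\e} f$. This is a $C^1$ vector field, because $f\in C^2(\Om)$ and $w_\e\in C^{2,1}$ (see \eqref{11.24.3}). The Christoffel symbols \eqref{11.24.9} are continuous, so all covariant derivatives below make sense.

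\emph{Step 1 (split the left-hand side).} By the definition of the gradient,
\begin{equation*}
\lg \nabla_{g_\e} f,\nabla_{g_\e}(H(f))\rg_{g_\e}=X\big(H(f)\big), \qquad H(f)=\lg H,\nabla_{g_\e}f\rg_{g_\e}=\lg H,X\rg_{g_\e}.
\end{equation*}
Metric compatibility of $D$ then gives
\begin{equation*}
X\big(\lg H,X\rg_{g_\e}\big)=\lg D_XH,X\rg_{g_\e}+\lg H,D_XX\rg_{g_\e}.
\end{equation*}
By \eqref{11.24.7}, the first term is exactly $DH(\nabla_{g_\e}f,\nabla_{g_\e}f)$.

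\emph{Step 2 (symmetry of the Hessian).} This is the step that needs care. The quantity $\lg D_X\nabla_{g_\e}f, Y\rg_{g_\e}=D^2f(X,Y)$ is the Riemannian Hessian, and it is symmetric because $D$ is torsion free. Concretely,
\begin{equation*}
D^2f(X,Y)=X(Y f)-(D_XY)f,
\end{equation*}
and $D_XY-D_YX=[X,Y]$. Applying this with $Y=H$ gives
\begin{equation*}
\lg H,D_XX\rg_{g_\e}=D^2f(X,H)=D^2f(H,X)=\lg D_H\nabla_{g_\e}f,\nabla_{g_\e}f\rg_{g_\e}=\tfrac12 H\big(|\nabla_{g_\e}f|_{g_\e}^2\big),
\end{equation*}
where the last equality is metric compatibility again. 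The only regularity required is $f\in C^2$ and $H\in C^1$, which is what is assumed. If one prefers to avoid the abstract argument, this step can be checked directly in coordinates using \eqref{11.24.9}, but the coordinate-free route is shorter.

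\emph{Step 3 (convert to Euclidean divergence).} For a $C^1$ scalar $\phi$, the Euclidean product rule gives
\begin{equation*}
H(\phi)=\sum_i H^i\,\pt_{x^i}\phi=\Div_0(\phi H)-\phi\,\Div_0(H).
\end{equation*}
Apply this with $\phi=|\nabla_{g_\e}f|_{g_\e}^2=w_\e|\nabla f|^2$ (see \eqref{11.24.4}), which is $C^1$. Combining with Steps 1 and 2 yields the stated identity. I expect no real obstacle beyond keeping track, in Step 2, of the fact that the divergence in the final formula is $\Div_0$ and not $\Div_{g_\e}$. Step 3 is precisely what reconciles the two.
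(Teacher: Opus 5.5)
Your proof is correct, but it takes a different route from the paper's. The paper works entirely in coordinates, in three steps:
\begin{itemize}
\item It expands the left-hand side as $\sum_{i,j} w_\e\,\pt_j H^i\,\pt_i f\,\pt_j f+\sum_{i,j} w_\e H^i\,\pt_{ij}f\,\pt_j f$.
\item It computes $DH(\nabla_{g_\e}f,\nabla_{g_\e}f)$ from the explicit Christoffel symbols \eqref{11.24.9} of the conformal metric $w_\e^{-1}I_N$. This produces the extra term $-\f{\al}{2}w_\e\psi_\e^{-1}(H\cdot\nabla\psi_\e)|\nabla f|^2$.
\item It expands $\f12 H(w_\e|\nabla f|^2)$ via \eqref{11.25.1}, so that the $\nabla\psi_\e$ terms cancel.
\end{itemize}

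Your argument reaches the same intermediate identity, namely that the left-hand side equals $DH(X,X)+\f12 H(|\nabla_{g_\e}f|_{g_\e}^2)$, without ever touching $\Ga^k_{ij}$. Metric compatibility splits off $DH(X,X)$. Torsion-freeness, through the symmetry of the Hessian, converts $\lg H, D_XX\rg_{g_\e}$ into $\f12 H(|\nabla_{g_\e} f|^2_{g_\e})$. Your final step, the Euclidean product rule for $\Div_0$, is the same as the first equality in the paper's ``on the other hand'' computation.

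Each route has its advantage. Yours is shorter, shows the identity holds for any $C^1$ Riemannian metric and not only the conformally flat $g_\e$, and explains structurally why the Christoffel contributions must cancel. The paper's computation is more elementary and self-contained, and it doubles as a check of the explicit formulas \eqref{11.24.9} and \eqref{11.25.1} that are reused later.

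Your regularity bookkeeping is adequate. You have $X,H\in C^1$, and both $H(f)$ and $|\nabla_{g_\e}f|^2_{g_\e}=w_\e|\nabla f|^2$ are $C^1$. Hence $X(Hf)-H(Xf)=[X,H]f$ and $D_XH-D_HX=[X,H]$ both hold pointwise.
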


\begin{proof}
	 Suppose
 $H=\sum_{i=1}^N H^i\pxi$. Then $H(f)=\sum_{i=1}^N H^i\f{\pt f}{\pt x^i}$.  By referring to \eqref{11.24.3}, we can obtain
	\begin{equation*}
		\begin{split}
		\nabla_{g_\e}\left(\sum_{i=1}^N H^i\f{\pt f}{\pt x^i}\right)
		&=\sum_{j=1}^N w_\e \pxj\left(\sum_{i=1}^N H^i\f{\pt f}{\pt x^i}\right)\pxj \\
		&=\sum_{i,j=1}^N w_\e \f{\pt H^i}{\pt x^j}\f{\pt f}{\pt x^i}\pxj +\sum_{i,j=1}^N w_\e H^i\f{\pt^2f}{\pt x^j\pt x^i}\pxj.
		\end{split}
	\end{equation*}
	Consequently, 
	\begin{equation}\label{11.24.6}
		\begin{split}
			\llg \nabla_{g_\e} f, \nabla_{g_\e}(H(f))\rrg_{g_\e}
			&=\sum_{i,j=1}^N w_\e\f{\pt H^i}{\pt x^j}\f{\pt f}{\pt x^i}\f{\pt f}{\pt x^j}+\sum_{i,j=1}^N w_\e H^i \f{\pt^2 f}{\pt x^i\pt x^j}\f{\pt f}{\pt x^j}
		\end{split}
	\end{equation}
	according to \eqref{11.24.4} and \eqref{11.24.5}.
	
	  On the one hand, we have the following expression 
	\begin{equation*}
		\begin{split}
			DH(\nabla_{g_\e}f, \nabla_{g_\e}f)
			&=\llg D_{\nabla_{g_\e}f}H, \nabla_{g_\e}f\rrg_{g_\e} =\sum_{i,j=1}^N w_\e \f{\pt H^i}{\pt x^j}\f{\pt f}{\pt x^i}\f{\pt f}{\pt x^j}+\sum_{i,j,k=1}^Nw_\e H^j\f{\pt f}{\pt x^i}\f{\pt f}{\pt x^k}\Ga_{ij}^k\\
			&=\sum_{i,j=1}^N w_\e \f{\pt H^i}{\pt x^j}\f{\pt f}{\pt x^i}\f{\pt f}{\pt x^j}-\f{\al}{2}w_\e\psi_\e^{-1}\sum_{i,j=1}^N H^j \f{\pt\psi_\e}{\pt x^j}\left(\f{\pt f}{\pt x^i}\right)^2.
		\end{split}
	\end{equation*}
	This result is derived by applying equations \eqref{11.24.7}, \eqref{11.24.5}, \eqref{11.24.9}, and \eqref{11.24.8}. Additionally, we note that  
	\begin{equation*}
		\begin{split}
			\sum_{i,j,k=1}^Nw_\e H^j\f{\pt f}{\pt x^i}\f{\pt f}{\pt x^k}\Ga_{ij}^k=-\f{\al}{2}w_\e\psi_\e^{-1}\sum_{i,j=1}^N H^j \f{\pt\psi_\e}{\pt x^j}\left(\f{\pt f}{\pt x^i}\right)^2. 
		\end{split}
	\end{equation*}
 On the other hand, we consider the following expression
	\begin{equation*}
		\begin{split}
			&\f{1}{2}\Div_0\left(|\nabla_{g_\e}f|_{g_\e}^2H\right)-\f{1}{2}|\nabla_{g_\e}f|_{g_\e}^2\Div_0(H)\\
			&=\f{1}{2}\sum_{i=1}^N \left(\pxi \lg \nabla_{g_\e}f, \nabla_{g_\e}f\rg_{g_\e} \right)H^i=\f{1}{2}\sum_{i,j=1}^n \f{\pt w_\e}{\pt x^i}\left(\f{\pt f}{\pt x^j}\right)^2H^i+\sum_{i,j=1}^N w_\e\f{\pt^2f}{\pt x^i\pt x^j}\f{\pt f}{\pt x^j}H^i\\
			&=\f{\al}{2}\sum_{i,j=1}^N w_\e\psi_\e^{-1}H^i\f{\pt\psi_\e}{\pt x^i}\left(\f{\pt f}{\pt x^j}\right)^2+\sum_{i,j=1}^N w_\e\f{\pt^2f}{\pt x^i\pt x^j}\f{\pt f}{\pt x^j}H^i
		\end{split}
	\end{equation*}
	This is obtained by utilizing equations \eqref{11.24.4} and \eqref{11.25.1}.
Combining these results with \eqref{11.24.6}, we successfully complete the proof of the lemma.

\end{proof}

\begin{proposition}\label{11.24.P1}
	Let $\vp$ be a solution of the equation 
	\begin{equation}\label{11.26.1}
		\pt_{tt}\vp+\mcA_\e\vp=0 \mbox{ in }Q.
	\end{equation}
	
	1).   Suppose  $H$ is a $C^1$  vector field defined on  $\ol\Om$.  Then, the following identity holds:
	\begin{equation*}
		\begin{split}
			&\iint_{\pt Q}\f{\pt \vp}{\pt \nu_\e}H(\vp)\df S\df t+\f{1}{2}\iint_{\pt Q} \left((\pt_t\vp)^2-|\nabla_{g_\e}\vp|_{g_\e}^2\right)H\cdot \nu \df S\df t\\
			&=\int_\Om [(\pt_t\vp)H(\vp)](t)\df x\bigg|_{t=0}^{t=T}\\
			&\hspace{4.5mm}+\iint_Q DH(\nabla_{g_\e}\vp, \nabla_{g_\e}\vp)\df x\df t+\f{1}{2}\iint_Q \left((\pt_t\vp)^2-|\nabla_{g_\e}\vp|_{g_\e}^2\right)\Div_0(H)\df x\df t.
		\end{split}
	\end{equation*}
	 Here and in what follows, we denote $\f{\pt \vp}{\pt \nu_\e}=w_\e \nabla \vp\cdot \nu$ for each $\e\in (0,R_0)$.
	
	2).  
Assume $P\in C^2(\ol\Om)$.  Then, we have the following equality:
	\begin{equation*}
		\begin{split}
			\iint_Q P\left((\pt_t\vp)^2-|\nabla_{g_\e}\vp|_{g_\e}^2\right)\df x\df t
			&=\int_\Om (\pt_t\vp)\vp P\df x\bigg|_{t=0}^{t=T}+\f{1}{2}\iint_Q \vp^2 \mcA_\e P\df x\df t\\
			&\hspace{4.5mm}+\f{1}{2}\iint_{\pt Q}\vp^2 \nabla_{g_\e}P\cdot \nu \df S\df t-\iint_{\pt Q}\f{\pt \vp}{\pt \nu_\e}\vp P\df S\df t.
		\end{split}
	\end{equation*}
\end{proposition}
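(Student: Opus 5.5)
Both identities come from the multiplier method applied to \eqref{11.26.1}. Since $\mcA_\e$ is uniformly elliptic with $C^{2,1}$ coefficients, we may first assume $\vp$ is smooth enough, say $\vp\in C^2(\ol Q)$ vanishing on $\pt Q$, so that every integration by parts below is classical. The general case then follows by density of smooth data together with the estimates \eqref{11.14.8} and \eqref{11.14.10} of Theorem \ref{11.14.T2}. These estimates control $\pt\vp/\pt\nu$ in $L^2(\pt Q)$ as well as the energy, so both sides of each identity pass to the limit.

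For part 1) I will multiply \eqref{11.26.1} by $H(\vp)=H\cdot\nabla\vp$ and integrate over $Q$. The time term is handled by integrating by parts in $t$:
\begin{equation*}
\iint_Q \pt_{tt}\vp\, H(\vp)\df x\df t=\int_\Om \pt_t\vp H(\vp)\df x\Big|_0^T-\iint_Q \pt_t\vp\, H(\pt_t\vp)\df x\df t .
\end{equation*}
Writing $\pt_t\vp H(\pt_t\vp)=\f12\Div_0((\pt_t\vp)^2H)-\f12(\pt_t\vp)^2\Div_0 H$ produces the boundary term $\f12\iint_{\pt Q}(\pt_t\vp)^2H\cdot\nu$ and the interior term $\f12\iint_Q(\pt_t\vp)^2\Div_0H$.

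For the spatial term I use \eqref{11.24.1}, namely $\mcA_\e\vp=-\Div_0(\nabla_{g_\e}\vp)$, and the Euclidean divergence theorem:
\begin{equation*}
-\iint_Q \Div_0(\nabla_{g_\e}\vp)H(\vp)=-\iint_{\pt Q}\f{\pt\vp}{\pt\nu_\e}H(\vp)+\iint_Q \lg\nabla_{g_\e}\vp,\nabla_{g_\e}(H(\vp))\rg_{g_\e},
\end{equation*}
where \eqref{11.24.4} identifies $\nabla_{g_\e}\vp\cdot\nabla_0(H(\vp))$ with the $g_\e$ inner product. Lemma \ref{11.24.L1} converts the last integrand into $DH(\nabla_{g_\e}\vp,\nabla_{g_\e}\vp)+\f12\Div_0(|\nabla_{g_\e}\vp|^2_{g_\e}H)-\f12|\nabla_{g_\e}\vp|^2_{g_\e}\Div_0H$. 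One more divergence theorem then gives the boundary term $\f12\iint_{\pt Q}|\nabla_{g_\e}\vp|_{g_\e}^2H\cdot\nu$. Summing everything, equating with zero and moving the boundary terms to the left yields the stated identity. The only point needing care is the sign bookkeeping, in particular checking that the $(\pt_t\vp)^2$ and $|\nabla_{g_\e}\vp|^2$ terms combine with the displayed signs.

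For part 2) I will multiply \eqref{11.26.1} by $P\vp$ and integrate. In time,
\begin{equation*}
\iint_Q\pt_{tt}\vp\,P\vp=\int_\Om \pt_t\vp\,\vp P\Big|_0^T-\iint_Q P(\pt_t\vp)^2 .
\end{equation*}
In space,
\begin{equation*}
\iint_Q \mcA_\e\vp\,P\vp=-\iint_{\pt Q}\f{\pt\vp}{\pt\nu_\e}\vp P+\iint_Q P|\nabla_{g_\e}\vp|^2_{g_\e}+\f12\iint_Q\nabla_{g_\e}P\cdot\nabla(\vp^2).
\end{equation*}
Integrating the last term by parts once more, using $-\Div_0(\nabla_{g_\e}P)=\mcA_\e P$, gives $\f12\iint_{\pt Q}\vp^2\nabla_{g_\e}P\cdot\nu+\f12\iint_Q\vp^2\mcA_\eP$. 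Rearranging gives the formula; the terms carrying $\vp$ on $\pt Q$ actually vanish because of the Dirichlet condition, but they are kept for generality.

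The main obstacle is justifying the regularity, since $H(\vp)$ must be in $H^1$ in order to integrate by parts against $\mcA_\e\vp$. My plan is to prove both identities first for data in $D(\mcA_\e^{3/2})\times D(\mcA_\e)$. For such data the uniform ellipticity of $\mcA_\e$ and standard $H^2$ regularity give $\vp\in C([0,T];H^2(\Om))$ and $\pt_t\vp\in C([0,T];H^1_0(\Om))$. I would then pass to the limit using continuity of every term in the energy norm, which is where the hidden-regularity bound on $\pt\vp/\pt\nu$ from Theorem \ref{11.14.T2} is needed.
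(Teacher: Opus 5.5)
Your proposal is correct and is essentially the paper's proof: multiply by $H(\vp)$ (resp.\ $P\vp$), integrate by parts in $t$ and in $x$, apply Lemma \ref{11.24.L1} and the divergence theorem, and the signs combine exactly as displayed. The density step you add is extra justification the paper only asserts via uniform hyperbolicity: you first treat strong solutions and then pass to the limit using the hidden-regularity bound \eqref{11.14.8}, rewriting the boundary terms through $\pt\vp/\pt\nu$ since $\vp=0$ on $\pt Q$.
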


\begin{proof}
	 1). Multiplying both sides of the first inequality in \eqref{11.26.1} by
$H(\vp)$,  integrating by parts, and applying Green's formula, we arrive at
	\begin{equation*}
		\begin{split}
			&\iint_Q (\pt_{tt}\vp)H(\vp)\df x\df t\\
			&=\iint_Q \pt_t\left((\pt_t\vp)H(\vp)\right)\df x\df t-\iint_Q (\pt_t\vp)\pt_t[H(\vp)]\df x\df t\\
			&=\int_\Om (\pt_t\vp(t))[H(\vp)(t)]\df x\bigg|_{t=0}^{t=T}-\f{1}{2}\iint_Q H([\pt_t\vp]^2)\df x\df t\\
			&=\int_\Om [\pt_t\vp(t)][H(\vp)(t)]\df x\bigg|_{t=0}^{t=T}+\f{1}{2}\iint_Q (\pt_t\vp)^2\Div_0(H)\df x\df t-\f{1}{2}\iint_{\pt Q} (\pt_t\vp)^2 H\cdot \nu \df S\df t. 
		\end{split}
	\end{equation*}
	 Furthermore, we have
	\begin{equation*}
		\begin{split}
			\iint_Q (\mcA_\e \vp)H(\vp)\df x\df t
			&=-\iint_{\pt Q}\Div_0[(w_\e\nabla \vp) H(\vp)]\df x\df t+\iint_Q w_\e \nabla\vp\cdot \nabla[H(\vp)]\df x\df t\\
			&=-\iint_{\pt Q}\f{\pt \vp}{\pt \nu_\e}H(\vp)\df S\df t+\iint_Q \lg \nabla_{g_\e}\vp, \nabla_{g_\e}(H(\vp))\rg_{g_\e}\df x\df t\\
			&=-\iint_{\pt Q}\f{\pt \vp}{\pt \nu_\e}H(\vp)\df S\df t+\iint_Q DH(\nabla_{g_\e}\vp, \nabla_{g_\e}\vp)\df x\df t\\
			&\hspace{4.5mm}+\f{1}{2}\iint_Q \Div_0\left(|\nabla_{g_\e}\vp|_{g_\e}^2H\right)\df x\df t-\f{1}{2}\iint_Q |\nabla_{g_\e} \vp|_{g_\e}^2\Div_0(H)\df x\df t\\
			&=-\iint_{\pt Q}\f{\pt \vp}{\pt \nu_\e}H(\vp)\df S\df t+\iint_Q DH(\nabla_{g_\e}\vp, \nabla_{g_\e}\vp)\df x\df t\\
			&\hspace{4.5mm}+\f{1}{2}\iint_{\pt Q} |\nabla_{g_\e}\vp|_{g_\e}^2H\cdot\nu\df S\df t-\f{1}{2}\iint_Q |\nabla_{g_\e} \vp|_{g_\e}^2\Div_0(H)\df x\df t
		\end{split}
	\end{equation*}
	 Utilizing \eqref{11.24.3}, \eqref{11.24.4}, and Lemma \ref{11.24.L1}, these two equalities lead to (1) in this proposition.
	
	2).  It is evident that
	\begin{equation*}
		\mcA_\e P=-\Div_0(w_\e\nabla P)=-\Div_0(\nabla_{g_\e}P)
	\end{equation*}
	according to \eqref{11.24.3}. Observe that from \eqref{11.25.7} and \eqref{11.24.4}, we obtain
	\begin{equation*}
		\begin{split}
			\llg \nabla_{g_\e}\vp, \nabla_{g_\e}(P\vp)\rrg_{g_\e}
			&=\llg \nabla \vp, \nabla (P\vp)\rrg_0 w_\e =\lg \nabla\vp, \vp\nabla P\rg_0 w_\e +\lg \nabla \vp, P\nabla \vp\rg_0 w_\e\\
			&=P|\nabla_{g_\e}\vp|_{g_\e}^2+\f{1}{2}\nabla_{g_\e}P(\vp^2)=P|\nabla_{g_\e} \vp|_{g_\e}^2+\f{1}{2}\Div_0\left(\vp^2\nabla_{g_\e}P\right)+\f{1}{2}\vp^2\mcA_\e P,
		\end{split}
	\end{equation*}
	 Combining this with the divergence theorem, we deduce
	\begin{equation*}
		\begin{split}
			\int_\Om (\pt_t\vp)(\vp P)\df x\bigg|_{t=0}^{t=T}
			&=\iint_Q\left[(\pt_{tt}\vp)(\vp P)+(\pt_t\vp)^2P\right]\df x\df t\\
			&=\iint_Q  \left[(\vp P)\Div_0(w_\e\nabla \vp)+(\pt_t\vp)^2P\right]\df x\df t\\
			&=\iint_{\pt Q}\vp P\f{\pt \vp}{\pt \nu_\e}\df S\df t+\iint_Q \left[-\lg \nabla_{g_\e}\vp, \nabla_{g_\e}(\vp P)\rg_{g_\e}+(\pt_t\vp)^2P\right]\df x\df t\\
			&=\iint_QP\left((\pt_t\vp)^2-|\nabla_{g_\e}\vp|_{g_\e}^2\right)\df x\df t-\f{1}{2}\iint_Q\vp^2\mcA_\e P\df x\df t\\
			&\hspace{4.5mm}+\iint_{\pt Q}\f{\pt \vp}{\pt \nu_\e}\vp P\df S\df t-\f{1}{2}\iint_{\pt Q}\vp^2\nabla_{g_\e}P\cdot \nu\df S\df t.
		\end{split}
	\end{equation*}
	This  proves 2).  
\end{proof}

\begin{lemma}\label{12.04.L1}
	 Consider the function, defined for $(0<\e<\min\{R_0,\f{1}{4}\})$, as follows:
	\begin{equation*}
		f(x)=3\e^4+6\e^2(1-\al)x^2-(1-2\al)x^4 \mbox{ for } x\in [0,\e].
	\end{equation*}
	 This function satisfies the following property
	\begin{equation}\label{02.06.1}
		f(x)\geq \wh a, \mbox{ with }
		\wh a=
		\begin{cases}
			\e^4\min\{3,4(2-\al)\}, &\al\in (0,1], \\
			\e^4\min\left\{3,  \f{3}{2\al-1}(3\al-2)(2-\al)\right\}, &\al\in (1,2).
		\end{cases}
	\end{equation}
\end{lemma}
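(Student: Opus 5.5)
The plan is to reduce the quartic to a quadratic through the substitution $s=x^2$. As $x$ runs over $[0,\e]$, $s$ runs over $[0,\e^2]$, and
\begin{equation*}
	f(x)=g(s):=3\e^4+6\e^2(1-\al)s-(1-2\al)s^2 .
\end{equation*}
So it suffices to find the minimum of the quadratic $g$ on $[0,\e^2]$. Its endpoint values are
\begin{equation*}
	g(0)=3\e^4, \qquad g(\e^2)=(3+6-6\al-1+2\al)\e^4=4(2-\al)\e^4 .
\end{equation*}
The derivative is
\begin{equation*}
	g'(s)=6\e^2(1-\al)+2(2\al-1)s .
\end{equation*}
I split into cases according to the sign of the leading coefficient $-(1-2\al)$ and the location of the critical point.

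\emph{Case $\al\in(0,\f12]$.} The leading coefficient is $\le 0$, so $g$ is concave (or affine). Its minimum on $[0,\e^2]$ is therefore attained at an endpoint, giving $g\ge \e^4\min\{3,4(2-\al)\}$.

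\emph{Case $\al\in(\f12,1]$.} Both coefficients of $g'$ are nonnegative, so $g'(s)\ge0$ for $s\ge0$ and $g$ is nondecreasing. Hence $g\ge g(0)=3\e^4\ge \e^4\min\{3,4(2-\al)\}$. Together with the previous case, this gives the first line of \eqref{02.06.1}.

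\emph{Case $\al\in(1,2)$.} Now $g$ is strictly convex, with vertex at
\begin{equation*}
	s^*=\f{3\e^2(\al-1)}{2\al-1}.
\end{equation*}
First I check that $s^*\in[0,\e^2]$: this holds because $3(\al-1)\le 2\al-1$ is equivalent to $\al\le2$. The minimum of $g$ over the interval is therefore the vertex value. Completing the square gives
\begin{equation*}
	g(s^*)=3\e^4-\f{9\e^4(\al-1)^2}{2\al-1}=\f{3\e^4\big[(2\al-1)-3(\al-1)^2\big]}{2\al-1}.
\end{equation*}
Expanding the bracket gives $-3\al^2+8\al-4$, which factors as $(3\al-2)(2-\al)$. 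Hence
\begin{equation*}
	f\ge \f{3\e^4}{2\al-1}(3\al-2)(2-\al)\ge \e^4\min\Big\{3,\f{3}{2\al-1}(3\al-2)(2-\al)\Big\}.
\end{equation*}
This value is positive because $\al>1$.

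The only step needing care is this last case: verifying that the vertex lies inside $[0,\e^2]$, and factoring the discriminant-type expression. Everything else is elementary monotonicity or concavity. The hypothesis $\e<\min\{R_0,\f14\}$ plays no role beyond ensuring that $[0,\e]$ is the relevant range.
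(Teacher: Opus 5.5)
Your proof is correct and follows essentially the same route as the paper: a critical-point-versus-endpoint analysis of $f$ with the same case split at $\alpha=\frac12$ and $\alpha=1$, where the substitution $s=x^2$ only repackages the paper's factorization $f'(x)=4x[3\varepsilon^2(1-\alpha)-(1-2\alpha)x^2]$. In the case $\alpha\in(1,2)$ you are more explicit than the paper: you use convexity to show that the vertex value $\frac{3\varepsilon^4}{2\alpha-1}(3\alpha-2)(2-\alpha)$ is the exact minimum and you verify it. The paper only lists the endpoint and critical-point candidates and compares $4(2-\alpha)$ with that value, without carrying out the computation.
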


\begin{proof}
	 First, observe that the derivative of
$f(x)$
 is given by   $f'(x)=4x[3\e^2(1-\al)-(1-2\al)x^2]$.

	For $\alpha \in (0,\tfrac{1}{2})$,  the equation $f'(x) = 0$  is satisfied if and only if 
	$x = 0$ or $x = \epsilon \sqrt{\frac{3(1-\alpha)}{1-2\alpha}}$.
	 Note that within this range of  $\alpha$, we have 
	$\sqrt{\frac{3(1-\alpha)}{1-2\alpha}} > 1$. 
	  Consequently, any maximum or minimum of $f(x)$  must occur at either $x = 0$ or $x = \epsilon$.  Furthermore, it follows that 
	$f(x) \ge \epsilon^{4} \min\{3,\, 4(2-\alpha)\}$.
	
	When $\f{1}{2}\leq \al\leq 1$,  the equation $f'(x)=0$ if and only if $x=0$.  In this case, $f(x)$  
	is monotonic on the interval $[0,\e]$, and thus we have $
	f(x)\geq \e^4\min \{3, 4(2-\al)\}$.
	
	When $\al\in (1,2)$,  the equation $f'(x)=0$ holds if and only if $x=0$ or $x=\e\sqrt{\f{3(1-\al)}{1-2\al}}$.  
 In this range of $\al$, we have $\sqrt{\f{3(1-\al)}{1-2\al}}< 1$. 
$f(x)$
 must be attained at  $x = 0$ or $x = \epsilon$ or $x=\e\sqrt{\f{3(1-\al)}{1-2\al}}$. Additionally, since $4(2-\al)\geq \f{3}{2\al-1}(3\al-2)(2-\al)$, we conclude that 
  $f(x)\geq \e^4\min\{3,\f{3}{2\al-1}(3\al-2)(2-\al)\}$. 
	This completes the proof of the lemma.
\end{proof}

 The subsequent analysis is based on Lemmas \ref{11.24.L1}, \ref{12.04.L1}, and Proposition \ref{11.24.P1}.

\begin{lemma}\label{11.26.L1}
	Let
	\begin{equation}\label{11.25.2}
		H=\sum_{k=1}^N H^k\pxk \mbox{ with } H^k=x^k, k=1,\cdots, N.
	\end{equation}
	Then for all $X\in\R_x^N$ and $x\in\Om$, there holds 
	\begin{equation}\label{11.26.3}
		\begin{split}
			\lg D_X H, X\rg_{g_\e}\geq a |X|_{g_\e}^2,  \mbox{ with } a=\f{3}{8}\min\{1,2-\al\}.
		\end{split}
	\end{equation}
\end{lemma}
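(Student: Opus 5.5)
The plan is a direct computation of $\lg D_XH, X\rg_{g_\e}$ in coordinates, using the Christoffel symbols \eqref{11.24.9}, and then a pointwise lower bound split into the regions $\Om\se B_\e$ and $B_\e$, with Lemma \ref{12.04.L1} handling $B_\e$.

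\textbf{Coordinate computation.} For $X=\sum_j X^j\pxj$, \eqref{11.24.8} gives
$$D_XH=\sum_{j,k}X^j\Big(\f{\pt H^k}{\pt x^j}+\sum_i H^i\Ga_{ij}^k\Big)\pxk .$$
Since $\pt_j H^k=\de_{jk}$ and \eqref{11.24.5} holds, we get
$$\lg D_XH,X\rg_{g_\e}=w_\e^{-1}\Big(|X|_0^2+\sum_{i,j,k}x^iX^jX^k\Ga_{ij}^k\Big).$$
Insert \eqref{11.24.9} into the triple sum. The term with $\pt_j\psi_\e\,\de_{ik}$ gives $(x\cdot X)(\nabla\psi_\e\cdot X)$, and the term with $-\pt_k\psi_\e\,\de_{ij}$ gives $-(x\cdot X)(\nabla\psi_\e\cdot X)$, so these two cancel. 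Only the term with $\pt_i\psi_\e\,\de_{jk}$ survives, contributing $-\f{\al}{2}\psi_\e^{-1}(x\cdot\nabla\psi_\e)|X|_0^2$. Since $w_\e^{-1}|X|_0^2=|X|_{g_\e}^2$ by \eqref{11.25.7}, this yields the identity
$$\lg D_XH,X\rg_{g_\e}=\Big(1-\f{\al}{2}\,\f{x\cdot\nabla\psi_\e}{\psi_\e}\Big)|X|_{g_\e}^2 .$$
So it suffices to bound the scalar factor below by $a$ at every $x\in\Om$.

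\textbf{Outside $B_\e$.} On $\Om\se B_\e$, \eqref{12.18.2} gives $x\cdot\nabla\psi_\e=|x|=\psi_\e$. The factor is therefore $1-\f{\al}{2}=\f{2-\al}{2}$, which is at least $\f38\min\{1,2-\al\}=a$.

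\textbf{Inside $B_\e$.} Set $r=|x|$. By \eqref{12.20.1} and \eqref{12.18.1},
$$8\e^3\psi_\e=3\e^4+6\e^2r^2-r^4,\qquad 8\e^3\,x\cdot\nabla\psi_\e=12\e^2r^2-4r^4 .$$
Hence the factor equals
$$\f{3\e^4+6\e^2(1-\al)r^2-(1-2\al)r^4}{8\e^3\psi_\e}=\f{f(r)}{8\e^3\psi_\e},$$
where $f$ is precisely the function of Lemma \ref{12.04.L1}. That lemma gives $f(r)\ge\wh a>0$. Lemma \ref{12.17.L1} gives $0<\psi_\e\le\e$, so the denominator is at most $8\e^4$. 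The factor is therefore at least $\wh a/(8\e^4)$.

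\textbf{Comparison with $a$.} For $\al\in(0,1]$, $\wh a/(8\e^4)=\min\{3,4(2-\al)\}/8=3/8\ge a$, since $4(2-\al)\ge4$. For $\al\in(1,2)$, we use $3\al-2\ge 2\al-1$ (equivalent to $\al\ge1$), which gives $\f{3}{2\al-1}(3\al-2)(2-\al)\ge3(2-\al)$. Hence $\wh a/(8\e^4)\ge\f38\min\{1,2-\al\}=a$.

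\textbf{Main obstacle.} The only delicate step is the index bookkeeping in the Christoffel contraction that produces the cancellation. After that, the bound on $B_\e$ reduces entirely to Lemma \ref{12.04.L1}, which was clearly prepared for this purpose. The smallness assumption on $\e$ in that lemma is harmless here, or can be dropped by rescaling $r=\e s$.
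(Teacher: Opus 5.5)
Your proposal is correct and is essentially the paper's proof. The Christoffel contraction reduces $\lg D_XH,X\rg_{g_\e}$ to $\bigl(1-\f{\al}{2}\psi_\e^{-1}\,x\cdot\nabla\psi_\e\bigr)|X|_{g_\e}^2$; the scalar factor is then at least $\f{2-\al}{2}$ off $B_\e$, and at least $\wh a/(8\e^4)\ge a$ on $B_\e$ by Lemma \ref{12.04.L1}, $\psi_\e\le\e$, and $3\al-2\ge 2\al-1$ (you merely reverse the order and spell out the cross-term cancellation the paper leaves implicit).
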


\begin{proof}
	Let $X=\sum_{i=1}^N X^i \pxi$.  Observe that the function 
	\begin{equation*}
		\begin{split}
			1-\f{\al}{2}\psi_\e^{-1}\sum_{k=1}^N x^k\f{\pt\psi_\e}{\pt x^k}=
			\begin{cases}
				\f{1}{2}(2-\al), &\mbox{for } |x|\geq \e, \\
				\psi_\e^{-1}\left(\f{3\e}{8}+\f{3(1-\al)}{4\e}|x|^2-\f{1-2\al}{8\e^3}|x|^4\right), &\mbox{for } |x|\leq \e
			\end{cases}
		\end{split}
	\end{equation*}
	satisfies
	\begin{equation}\label{11.26.4}
		1-\f{\al}{2}\psi_\e^{-1}\sum_{k=1}^N x^k\f{\pt\psi_\e}{\pt x^k}\geq a, \mbox{ with } a=\f{3}{8}\min\{1,2-\al\}
	\end{equation}
	according to \eqref{12.20.1}, \eqref{12.18.1},  and $\e^{-1}\leq \psi_\e^{-1}(x)\leq \f{8}{3}\e^{-1}$ for $x\in B_\e$,  
  as well as Lemma  \ref{12.04.L1} and the fact that $3\al-2\geq 2\al-1$ when $\al\in (1,2)$.
	
Subsequently, we obtain 
	\begin{equation}
		\begin{split}
			\lg D_XH, X\rg_{g_\e}
			&=\llg D_{\sum_{i=1}^N X^i\pxi} \sum_{k=1}^N H^k \pxk, \sum_{j=1}^N X^j\pxj\rrg_{g_\e}\\
			&=\sum_{i,j,k=1}^NX^iX^j\left(\llg\f{\pt H^k}{\pt x^i}\pxk, \pxj\rrg_{g_\e}+H^k\llg D_\pxi \pxk, \pxj\rrg_{g_\e}\right)\\
			&=\sum_{i,j,k=1}^N w_\e^{-1}X^iX^j \left(\de_i^k\de_{kj}+H^k\Ga_{ik}^j\right)\\
			&=\sum_{i=1}^N w_\e^{-1} (X^i)^2-\f{\al}{2}\sum_{i,j,k=1}^Nw_\e^{-1}\psi_\e^{-1} X^iX^j H^k \f{\pt \psi_\e}{\pt x^k}\de_{ij}\\
			&=\left(\sum_{i=1}^N w_\e^{-1}(X^i)^2\right)\left(1-\f{\al}{2}\psi_\e^{-1} \sum_{k=1}^N x^k\f{\pt \psi_\e}{\pt x^k}\right)\geq a |X|_{g_\e}^2
		\end{split}
	\end{equation}
	by $\f{\pt H^k}{\pt x^i}=\de_i^k$, \eqref{11.25.7} and \eqref{11.24.5}, and \eqref{11.24.9} and \eqref{11.26.4}. We complete the proof of this lemma. 
\end{proof}

\begin{notation}\label{12.16.N1}
Let $H, a$ be defined in \eqref{11.25.2} and \eqref{11.26.3} respectively. Set
\begin{equation}\label{11.26.5}
	\begin{split}
		b=\sup_{x\in\Om}|H|_{g_\e}(x),\quad P=\Div_0 H-a.
	\end{split}
\end{equation}
It is obviously that 
\begin{equation}\label{11.27.3}
	P=N-a. 
\end{equation}
 Observe that for all $x\in \Om$,  by \eqref{11.25.7}, we have 
\begin{equation*}
	|H|_{g_\e}^2=\sum_{i=1}^N w_\e^{-1}(H^i)^2=[w_\e(x)]^{-1} |x|^2,
\end{equation*}
and then 
\begin{equation*}
	|H|_{g_\e}^2(x)=
	\begin{cases}
		|x|^{2-\al}, &|x|\geq \e,\\
		\psi_\e^{-\al}(x)|x|^2, &|x|\leq \e,
	\end{cases}
\end{equation*}
by $|x|\leq \psi_\e(x)$  as stated in Lemma \ref{12.17.L1}, we deduce that
\begin{equation}\label{11.26.6}
	R_0^\f{2-\al}{2}<b\leq  M^\f{2-\al}{2}.
\end{equation}
 Let us denote 
\begin{equation}\label{11.26.7}
	c=N^2-a^2.
\end{equation}
 From \eqref{11.26.3}, it follows that 
 $0<a<1$, and thus $c\in (0,+\iy)$.    
 
 It is important to note that the constants  $a, b, c$ and $P$ 
 are independent of $\e\in (0,R_0)$ by \eqref{11.26.3}, and \eqref{11.27.3}, and \eqref{11.26.6} and \eqref{11.26.7}. 
\end{notation}

\begin{lemma}\label{11.26.L2}
	 Let $H$  be given by \eqref{11.25.2}, and $b$ 
 be defined as in \eqref{11.26.5}. Suppose
$\vp$
 is a solution to \eqref{11.14.2}. Then, we have
	\begin{equation}\label{11.27.1}
		\left|\left( \pt_t\vp,H(\vp)+\f{1}{2}P\vp\right)_{L^2(\Om)}\right|\leq bE_\e(0),
	\end{equation}
	where
	\begin{equation}\label{11.27.2}
		E_\e(t)=\f{1}{2}\int_\Om \left((\pt_t\vp)^2+|\nabla_{g_\e}\vp|_{g_\e}^2\right)\df x=E_\e(0)=\f{1}{2}\left(\|\vp^1\|_{L^2(\Om)}^2+\left\||\nabla_{g_\e}\vp^0|_{g_\e}\right\|_{L^2(\Om)}^2\right).
	\end{equation}
\end{lemma}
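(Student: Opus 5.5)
The lemma contains two assertions: conservation of the energy $E_\e(t)$, and the bound \eqref{11.27.1}. Both will be proved at every fixed time $t\in[0,T]$. Since \eqref{11.14.2} is uniformly hyperbolic, we first take smooth data, $\vp^0\in D(\mcA_\e)$ and $\vp^1\in H_0^1(\Om)$, with $f=0$. Then $\vp\in C([0,T];D(\mcA_\e))\cap C^1([0,T];H_0^1(\Om))$ by Theorem \ref{11.14.T2}, and all integrations by parts are legitimate. The general case will follow by density together with the uniform estimate \eqref{11.14.8}.

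For energy conservation, we multiply the equation by $\pt_t\vp$ and integrate over $\Om$. The boundary term vanishes because $\pt_t\vp=0$ on $\pt\Om$. By \eqref{11.24.4},
\begin{equation*}
\f{d}{dt}\f12\int_\Om\left((\pt_t\vp)^2+w_\e|\nabla\vp|^2\right)\df x=0 .
\end{equation*}
Hence $E_\e(t)=E_\e(0)$, and $E_\e(0)$ is expressed through the data.

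For \eqref{11.27.1}, the key step is the $L^2$ identity
\begin{equation*}
\left\|H(\vp)+\tfrac12 P\vp\right\|_{L^2(\Om)}^2\leq b^2\int_\Om|\nabla_{g_\e}\vp|_{g_\e}^2\df x .
\end{equation*}
To prove it, we expand the square. The pointwise Cauchy--Schwarz inequality in the metric $g_\e$ gives
\begin{equation*}
|H(\vp)|=|\lg H,\nabla_{g_\e}\vp\rg_{g_\e}|\le b|\nabla_{g_\e}\vp|_{g_\e},
\end{equation*}
because $H(\vp)=\nabla\vp\cdot H=\lg \nabla_{g_\e}\vp,H\rg_{g_\e}$ by \eqref{11.28.1}. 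For the cross term, $P=N-a$ is constant by \eqref{11.27.3}, and $\vp=0$ on $\pt\Om$, so
\begin{equation*}
\int_\Om P\vp\,H(\vp)\df x=\f P2\int_\Om H(\vp^2)\df x=-\f{PN}{2}\int_\Om\vp^2\df x .
\end{equation*}
Therefore
\begin{equation*}
\left\|H(\vp)+\tfrac12 P\vp\right\|^2=\|H(\vp)\|^2+\left(\tfrac{P^2}{4}-\tfrac{PN}{2}\right)\|\vp\|^2 .
\end{equation*}
The coefficient equals $\frac{P}{4}(P-2N)=-\frac{(N-a)(N+a)}{4}=-\frac c4<0$, using $0<a<1$ and \eqref{11.26.7}. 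The $\|\vp\|^2$ term can thus be dropped, and this gives the key identity.

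We then conclude by Cauchy--Schwarz and Young's inequality:
\begin{equation*}
\left|\left(\pt_t\vp,H(\vp)+\tfrac12P\vp\right)_{L^2(\Om)}\right|\le \|\pt_t\vp\|\, b\,\big\||\nabla_{g_\e}\vp|_{g_\e}\big\|\le \f b2\left(\|\pt_t\vp\|^2+\big\||\nabla_{g_\e}\vp|_{g_\e}\big\|^2\right)=bE_\e(t)=bE_\e(0).
\end{equation*}

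The main obstacle is the sign of the cross term. It requires recognising that $P$ is constant, so that the cross term reduces to $-\frac{PN}{2}\|\vp\|^2$, and that $c=N^2-a^2>0$; this is exactly where the choice $P=\Div_0H-a$ in Notation \ref{12.16.N1} enters. A secondary point is the density step for nonsmooth data. Both sides of \eqref{11.27.1} are continuous in the energy norm $H_0^1(\Om)\times L^2(\Om)$, since $H$ is bounded and $w_\e\ge(3\e/8)^\al>0$ for fixed $\e$. The bound therefore passes to the limit, using the continuous dependence given by \eqref{11.14.8}.
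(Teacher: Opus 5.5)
Your proposal is correct and matches the paper's proof. You expand $\|H(\vp)+\frac12P\vp\|_{L^2(\Om)}^2$ and use that $P=N-a$ is constant together with $\int_\Om H(\vp^2)\,\mathrm{d}x=-N\int_\Om\vp^2\,\mathrm{d}x$, so the $\|\vp\|^2$ coefficient is $-\frac c4<0$; you then bound $|H(\vp)|\le b|\nabla_{g_\e}\vp|_{g_\e}$ by Cauchy--Schwarz in $g_\e$ and close with $xy\le\frac12(x^2+y^2)$ plus energy conservation, exactly as the paper does, and you additionally justify the energy identity and the density step, which the paper takes for granted.
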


\begin{proof}
Applying the divergence theorem and utilizing the fact that
$\vp=0$ on $\pt\Om$, we obtain
	\begin{equation*}
		\int_\Om H(\vp^2)\df x=\int_\Om \sum_{i=1}^N x^i\f{\pt \vp^2}{\pt x^i}\df x=-N\int_\Om \vp^2\df x,
	\end{equation*}
	 and consequently,
	\begin{equation*}
		\begin{split}
			\left\|H(\vp)+\f{1}{2}P\vp\right\|_{L^2(\Om)}^2
			&=\|H(\vp)\|_{L^2(\Om)}^2+(H(\vp), P\vp)_{L^2(\Om)}+\f{1}{4}\|P\vp\|_{L^2(\Om)}^2\\
			&=\|H(\vp)\|_{L^2(\Om)}^2+\f{1}{2}\int_\Om PH(\vp^2)\df x+\f{1}{4}\|P\vp\|_{L^2(\Om)}^2\\
			&=\|H(\vp)\|_{L^2(\Om)}^2-\f{c}{4}\int_\Om \vp^2\df x.
		\end{split}
	\end{equation*}
	 by \eqref{11.26.5}, \eqref{11.27.3}, and \eqref{11.26.7}. Here, $P$
 and $c$  are defined in \eqref{11.27.3} and \eqref{11.26.7}, respectively. Therefore, from \eqref{11.25.7} and \eqref{11.26.5}, and considering
	\begin{equation*}
		\begin{split}
			\|H(\vp)\|_{L^2(\Om)}^2
			&=\int_\Om \left(\sum_{i=1}^N H^i\f{\pt\vp}{\pt x^i}\right)^2\df x\leq \int_\Om \left(\sum_{i=1}^N w_\e^{-1}(H^i)^2\right)\left(\sum_{i=1}^N w_\e \left(\f{\pt\vp}{\pt x^i}\right)^2\right)\df x\\
			&=\int_\Om |H(x)|_{g_\e}^2\left[\sum_{i=1}^N w_\e \left(\f{\pt\vp}{\pt x^i}\right)^2\right]\df x\leq b^2\left\||\nabla_{g_\e}\vp|_{g_\e}\right\|_{L^2(\Om)}^2,
		\end{split}
	\end{equation*}
	where $b$ is defined by \eqref{11.26.5}, we  can deduce that
	\begin{equation*}
		\begin{split}
			\left|\left(\pt_t\vp, H(\vp)+\f{1}{2}P\vp\right)_{L^2(\Om)}\right|
			&\leq \|\pt_t\vp\|_{L^2(\Om)}\left\|H(\vp)+\f{1}{2}P\vp\right\|_{L^2(\Om)}\leq \|\pt_t\vp\|_{L^2(\Om)}\|H(\vp)\|_{L^2(\Om)}\\
			&\leq b\|\pt_t\vp\|_{L^2(\Om)}\left\||\nabla_{g_\e}\vp|_{g_\e}\right\|_{L^2(\Om)}\leq bE_\e(0).
		\end{split}
	\end{equation*}
	 This completes the proof of the lemma.
\end{proof}

\begin{theorem}\label{11.27.T1}
	Let $a$ and $b$ be defined as in Notation \ref{12.16.N1}. Define 
	\begin{equation}\label{11.27.4}
		\theta=\sup_{x\in\pt\Om}\f{H\cdot\nu}{|\nu_\e|_{g_\e}^2}.
	\end{equation}
	Then,  for all $T>0$ and  all  $\vp^0\in H_0^1(\Om), \vp^1\in L^2(\Om)$,
	\begin{equation}\label{11.28.8}
		\int_0^T\int_{\Ga_0} \left(\f{\pt \vp}{\pt \nu_\e}\right)^2\df S\df t\geq \f{2}{\theta}\left(aT-2b\right)E_\e(0),
	\end{equation}
	where $\vp$   is the solution of \eqref{11.14.2} corresponding to  $(\vp^0,\vp^1, 0)$, and
	\begin{equation}\label{11.28.9}
		\Ga_0=\left\{x\in\pt\Om\colon H(x)\cdot \nu(x)>0\right\}.
	\end{equation}
\end{theorem}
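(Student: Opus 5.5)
By density and the continuous dependence estimate \eqref{11.14.8}, I would first reduce to smooth data $\vp^0\in C_0^\iy(\Om)$, $\vp^1\in C_0^\iy(\Om)$. For such data Theorem~\ref{11.14.T2} gives enough regularity to justify Proposition~\ref{11.24.P1}, since \eqref{11.14.2} is uniformly hyperbolic for fixed $\e$. I would then apply Proposition~\ref{11.24.P1} 1) with the multiplier $H=x$ from \eqref{11.25.2} and Proposition~\ref{11.24.P1} 2) with the constant $P=N-a$ from \eqref{11.27.3}. Adding $\f12\times$ the identity 2) to identity 1) gives the combined interior term
\[
\iint_Q \Big[DH(\nabla_{g_\e}\vp,\nabla_{g_\e}\vp)+\tfrac12\big((\pt_t\vp)^2-|\nabla_{g_\e}\vp|_{g_\e}^2\big)(\Div_0H - P)\Big]\df x\df t .
\]
Because $\Div_0 H-P=a$ and $\mcA_\e P=0$, Lemma~\ref{11.26.L1} bounds this from below by $\iint_Q [a|\nabla_{g_\e}\vp|^2_{g_\e}+\f a2(\pt_t\vp)^2-\f a2|\nabla_{g_\e}\vp|_{g_\e}^2]=a\iint_Q \f12((\pt_t\vp)^2+|\nabla_{g_\e}\vp|^2_{g_\e})=aTE_\e(0)$, using conservation of energy \eqref{11.27.2}.

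Next I would handle the time-boundary terms. They add up to $\big(\pt_t\vp,H(\vp)+\f12P\vp\big)_{L^2(\Om)}\big|_0^T$, and Lemma~\ref{11.26.L2} bounds this in absolute value by $2bE_\e(0)$.

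For the lateral boundary terms, $\vp=0$ on $\pt Q$ gives $\pt_t\vp=0$ and $\nabla\vp=(\pt_\nu\vp)\nu$ there. Hence $H(\vp)=(H\cdot\nu)\pt_\nu\vp$ and $\f{\pt\vp}{\pt\nu_\e}=w_\e\pt_\nu\vp$. Also $|\nabla_{g_\e}\vp|^2_{g_\e}=w_\e(\pt_\nu\vp)^2$, and the terms from 2) containing $\vp$ vanish. The left side of 1) therefore becomes
\[
\iint_{\pt Q} \Big[w_\e (\pt_\nu\vp)^2 H\cdot\nu-\tfrac12 w_\e(\pt_\nu\vp)^2H\cdot\nu\Big]=\tfrac12\iint_{\pt Q} w_\e^{-1}\Big(\f{\pt\vp}{\pt\nu_\e}\Big)^2 H\cdot\nu .
\]
I would identify $w_\e^{-1}$ with $1/|\nu_\e|^2_{g_\e}$, taking $\nu_\e=A_\e\nu$ as the conormal, so that $|\nu_\e|^2_{g_\e}=w_\e$. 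Then $w_\e^{-1}H\cdot\nu\le\theta$ on $\Ga_0$, and the integrand is $\le 0$ off $\Ga_0$. This yields
\[
\tfrac\theta2\int_0^T\!\!\int_{\Ga_0}\Big(\f{\pt\vp}{\pt\nu_\e}\Big)^2\ge aTE_\e(0)-2bE_\e(0),
\]
which is \eqref{11.28.8}.

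I expect the main obstacle to be the bookkeeping of signs and factors in combining 1) and $\f12\cdot$2). I would also need to check that the boundary normal-derivative identification matches the definition of $\theta$ in \eqref{11.27.4}, and to justify the density step uniformly via \eqref{11.14.8}. Since $w_\e=w$ near $\pt\Om$, $\theta$ is in fact independent of $\e$.
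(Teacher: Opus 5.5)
Your proposal is correct and follows essentially the same route as the paper. The common ingredients are the multiplier $H=x$ in Proposition~\ref{11.24.P1}~1), the constant $P=N-a$ in part~2) so that $\Div_0 H-P=a$, Lemma~\ref{11.26.L1} together with energy conservation for the $aTE_\e(0)$ term, Lemma~\ref{11.26.L2} for the time-boundary terms, and the lateral term $\frac12\frac{H\cdot\nu}{|\nu_\e|_{g_\e}^2}\bigl(\frac{\pt\vp}{\pt\nu_\e}\bigr)^2$, which is bounded by $\theta$ on $\Ga_0$ and nonpositive off it.

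The only differences are cosmetic. You identify the boundary terms directly through $\nabla\vp=(\pt_\nu\vp)\nu$ instead of the paper's $g_\e$-orthogonal decomposition. You also make explicit the density and hidden-regularity step via \eqref{11.14.8}, which the paper leaves implicit.
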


\begin{proof}
	 First, observe that from \eqref{11.25.7} and  $\nu_\e=w_\e \nu$,  we have  $|\nu_\e|_{g_\e}^2(x)=\sum_{i=1}^Nw_\e(\nu^i)^2(x)=|x|^\al$ on $\pt\Om$. Moreover, given   $H\cdot\nu=\sum_{i=1}^N x^i \nu^i$ and $R_0\leq |x|\leq M$ for all $x\in \pt\Om$, we can conclude that
	\begin{equation}\label{11.27.6}
		\theta=\sup_{x\in\pt\Om}\f{H\cdot\nu}{|\nu_\e|_{g_\e}^2}=\sup_{x\in\pt\Om}\f{H\cdot\nu}{|x|^\al}\in (0,+\iy) \mbox{ and $\theta$ is independent of $\e>0$}.
	\end{equation}
	
	Let $x\in \pt\Om$.  Then we can express
	\begin{equation}\label{11.28.3}
		\nabla_{g_\e}\vp(x)=\llg \nabla_{g_\e}\vp(x), \f{\nu_\e(x)}{|\nu_\e|_{g_\e}}\rrg_{g_\e}\f{\nu_\e(x)}{|\nu_\e|_{g_\e}}+Y(x) \mbox{ with } \lg Y(x), \nu_\e(x)\rg_{g_\e}=0.
	\end{equation}
	 Note that from \eqref{11.28.2} and \eqref{11.28.1}, we obtain
	\begin{equation*}
		Y(x)\cdot \nu(x)=\llg Y(x), \nu_\e(x)\rrg_{g_\e}=0,
	\end{equation*}
	 which implies that
 $Y(x)\in (\pt\Om)_x$, the tangent space of $\pt\Om$ at $x$.   Combining this with \eqref{11.28.3} and $\vp=0$ on $\pt\Om$, we get, for each $x\in\pt\Om$,
	\begin{equation}\label{11.28.4}
		\begin{split}
			\left|\nabla_{g_\e}\vp\right|_{g_\e}^2
			&=\nabla_{g_\e}\vp(\vp)=\f{1}{|\nu_\e(x)|_{g_\e}^2}\llg \nabla_{g_\e}\vp(x), \nu_\e(x)\rrg_{g_\e}^2+Y(\vp)=\f{1}{|\nu_\e|_{g_\e}^2}\left(\f{\pt\vp}{\pt\nu_\e}\right)^2.
		\end{split}
	\end{equation}
	Similar to \eqref{11.28.3}, $H$ can be written as
	\begin{equation}\label{11.28.5}
		\begin{split}
			H=\llg H(x), \f{\nu_\e(x)}{|\nu_\e(x)|_{g_\e}}\rrg_{g_\e}\f{\nu_\e(x)}{|\nu_\e(x)|_{g_\e}}+Z(x),
		\end{split}
	\end{equation}
	where $Z(x)\in (\pt\Om)_x$.  Combining this with \eqref{11.28.2}, \eqref{11.28.1}, and $\vp=0$ on $\pt\Om$,  we 
have, for each $x\in\pt\Om$,
	\begin{equation}\label{11.28.6}
		\begin{split}
			H(\vp)=\f{\llg H(x), \nu_\e(x)\rrg_{g_\e}}{|\nu_\e(x)|_{g_\e}^2}\f{\pt\vp}{\pt \nu_\e}=\f{H(x)\cdot \nu(x)}{|\nu_\e(x)|_{g_\e}^2}\f{\pt \vp}{\pt \nu_\e}.
		\end{split}
	\end{equation}
	
	Now, we have
	\begin{eqnarray*}
			&&\f{\theta}{2}\int_0^T\int_{\Ga_0}\left(\f{\pt\vp}{\pt\nu_\e}\right)^2\df S\df t \crr
			&&\geq  \f{1}{2}\iint_{\pt Q} \left(\f{\pt\vp}{\pt \nu_\e}\right)^2\f{H\cdot \nu}{|\nu_\e|_{g_\e}^2}\df S\df t\crr
			&&=\iint_{\pt Q}\f{\pt\vp}{\pt\nu_\e}H(\vp)\df S\df t+\f{1}{2}\iint_{\pt Q}\left((\pt_t\vp)^2-|\nabla_{g_\e}\vp|_{g_\e}^2\right)H\cdot \nu\df S\df t\crr
			&&=\int_\Om [(\pt_t\vp)H(\vp)](t)\df x\bigg|_{t=0}^{t=T}\\
			&&\hspace{4.5mm}+\iint_Q \llg D_{\nabla_{g_\e}\vp } H, \nabla_{g_\e}\vp\rrg_{g_\e}\df x\df t+\f{1}{2}\iint_Q \left((\pt_t\vp)^2-|\nabla_{g_\e}\vp|_{g_\e}^2\right)\Div_0(H)\df x\df t\crr
			&&= \f{a}{2}\iint_Q \left((\pt_t\vp)^2+|\nabla_{g_\e}\vp|_{g_\e}^2\right)\df x\df t\crr
			&&\hspace{4.5mm}+\iint_Q \llg D_{\nabla_{g_\e}\vp } H, \nabla_{g_\e}\vp\rrg_{g_\e}\df x\df t-a\iint_Q |\nabla_{g_\e}\vp|_{g_\e}^2\df x\df t\crr
			&&\hspace{4.5mm}+\int_\Om [(\pt_t\vp)H(\vp)](t)\df x\bigg|_{t=0}^{t=T}+\f{1}{2}\iint_Q \left((\pt_t\vp)^2-|\nabla_{g_\e}\vp|_{g_\e}^2\right)P\df x\df t,\nonumber 
	\end{eqnarray*}
	 where for the first inequality, we employed \eqref{11.27.4}; for the first equality, we utilized \eqref{11.28.4}, \eqref{11.28.6}, along with the fact that $\pt_t\vp=0$ on $\pt\Om$;   for the second equality, we apply 1) in Proposition \ref{11.24.P1}; and for the last equality, we made use of $\Div_0(H)=N$ and \eqref{11.27.3}.

Subsequently, drawing upon 2) in Proposition \ref{11.24.P1}, as well as \eqref{11.26.3}, \eqref{11.26.5}, \eqref{11.27.3}, and Lemma \ref{11.26.L2}, we arrive at the following result
	\begin{equation*}
		\begin{split}
			\f{\theta}{2}\int_0^T\int_{\Ga_0}\left(\f{\pt\vp}{\pt\nu_\e}\right)^2\df S\df t
			&\geq a TE_\e(0)+\int_\Om (\pt_t\vp)\left(H(\vp)+\f{1}{2}P\vp\right)\df x\bigg|_{t=0}^{t=T}\geq (aT-2b)E_\e(0).
		\end{split}
	\end{equation*}
	 This completes the proof of the theorem.
\end{proof}

\begin{corollary}
	 Under the conditions specified in Theorem \ref{11.27.T1}, the following inequality holds
	\begin{equation*}
		\int_0^T\int_{\Ga_0}\left(\f{\pt \vp}{\pt \nu}\right)^2\df S\df t\geq \f{2}{\theta M^{2\al}} (aT-2b)E_\e(0).
	\end{equation*}
	 Furthermore, when $T>\f{2b}{a}$, 
	\begin{equation}\label{12.04.2}
		\int_0^T\int_{\Ga_0}\left(\f{\pt \vp}{\pt \nu}\right)^2\df S\df t\geq CE_\e(0),
	\end{equation}
	where the constant $C>0$ depends only on $\al, M, T$ and $\Om$.
\end{corollary}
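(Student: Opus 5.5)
The plan is to deduce the corollary directly from Theorem \ref{11.27.T1} by converting the conormal derivative $\f{\pt\vp}{\pt\nu_\e}=w_\e\nabla\vp\cdot\nu$ into the ordinary normal derivative $\f{\pt\vp}{\pt\nu}=\nabla\vp\cdot\nu$ on $\Ga_0$. The key observation is that $\pt\Om$ lies outside $B(0,8R_0)\supset B_\e$, so by Lemma \ref{12.17.L1} and \eqref{11.14.1} we have $w_\e=\psi_\e^\al=|x|^\al$ on $\pt\Om$. Hence on $\pt Q$
\begin{equation*}
	\f{\pt\vp}{\pt\nu_\e}=|x|^\al\f{\pt\vp}{\pt\nu},
\end{equation*}
and since $|x|\leq M$ on $\pt\Om$ (with $M=\sup_{x\in\Om}|x|+1\geq 1$),
\begin{equation*}
	\left(\f{\pt\vp}{\pt\nu_\e}\right)^2\leq M^{2\al}\left(\f{\pt\vp}{\pt\nu}\right)^2 \mbox{ on } \pt Q.
\end{equation*}

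Integrating this over $\Ga_0\ts(0,T)$ and combining with \eqref{11.28.8} gives
\begin{equation*}
	\int_0^T\int_{\Ga_0}\left(\f{\pt\vp}{\pt\nu}\right)^2\df S\df t\geq M^{-2\al}\int_0^T\int_{\Ga_0}\left(\f{\pt\vp}{\pt\nu_\e}\right)^2\df S\df t\geq \f{2}{\theta M^{2\al}}(aT-2b)E_\e(0),
\end{equation*}
which is the first claim. The boundary traces are legitimate here because \eqref{11.14.2} is uniformly hyperbolic for fixed $\e$. Theorem \ref{11.14.T2} gives $\f{\pt\vp}{\pt\nu}\in L^2(\pt Q)$, so the integrals are finite. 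For smooth data the identities hold pointwise, and for general $(\vp^0,\vp^1)\in H_0^1(\Om)\ts L^2(\Om)$ they extend by density using the continuity estimate \eqref{11.14.8}.

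For the second claim, assume $T>\f{2b}{a}$, so that $aT-2b>0$, and set $C=\f{2}{\theta M^{2\al}}(aT-2b)>0$. The main point to check is that $C$ is independent of $\e$. By Notation \ref{12.16.N1}, $a=\f38\min\{1,2-\al\}$ depends only on $\al$. By \eqref{11.27.6}, $\theta$ depends only on $\Om$ and $\al$. The constant $b$ is $\e$-dependent as a supremum, but it is bounded above by $M^{\f{2-\al}{2}}$ by \eqref{11.26.6}.

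To get a constant that is clean and uniform in $\e$, I would replace $b$ by its upper bound $\ol b=M^{\f{2-\al}{2}}$. I would then interpret the condition as $T>\f{2\ol b}{a}$, or simply note that $aT-2b\geq aT-2\ol b$. This gives $C$ depending only on $\al,M,T,\Om$. This uniformity bookkeeping is the only delicate step; the rest is immediate.
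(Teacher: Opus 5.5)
Your first step is the paper's argument. Since $\pt\Om\cap B_\e=\emptyset$, you have $w_\e=|x|^\al\le M^\al$ on $\pt\Om$, so $\f{\pt\vp}{\pt\nu_\e}=w_\e\f{\pt\vp}{\pt\nu}$, and \eqref{11.28.8} gives the first inequality.

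The gap is in your uniformity step for the second claim. You say $b$ depends on $\e$ and replace it by $\bar b=M^{\f{2-\al}{2}}$. But $\bar b$ is strictly larger than $b$ (in fact $b=(M-1)^{\f{2-\al}{2}}$). The bound $aT-2b\ge aT-2\bar b$ therefore gives a positive constant only when $T>2\bar b/a$. For $2b/a<T\le 2\bar b/a$ your argument yields nothing. Reinterpreting the hypothesis as $T>2\bar b/a$ proves a strictly weaker statement than the one claimed.

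What you are missing is that $b$ is independent of $\e$, as Notation \ref{12.16.N1} asserts. From $|H|_{g_\e}^2=w_\e^{-1}|x|^2$:
\begin{itemize}
\item for $|x|\ge\e$ you get $|H|_{g_\e}^2=|x|^{2-\al}$;
\item for $|x|\le\e$, the bound $\psi_\e\ge|x|$ from Lemma \ref{12.17.L1} gives $|H|_{g_\e}^2\le|x|^{2-\al}\le\e^{2-\al}$.
\end{itemize}
Since $\sup_{x\in\Om}|x|\ge 8R_0>\e$, the supremum is attained outside $B_\e$. Hence $b=(\sup_{x\in\Om}|x|)^{\f{2-\al}{2}}$ for every $\e\in(0,R_0)$.

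With this, $C=\f{2}{\theta M^{2\al}}(aT-2b)$ is positive under the stated hypothesis $T>2b/a$. It depends only on $\al$, $M$, $T$ and $\Om$, because $\theta=\sup_{\pt\Om}\f{x\cdot\nu}{|x|^\al}$ is also $\e$-free. This is all the paper's one-line proof uses.
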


\begin{proof}
	 Given that  $\nu_\e=w_\e \nu$   and leveraging Theorem \ref{11.27.T1}, we can derive the desired result.
\end{proof}

\subsection{Approximation}

 With the aforementioned preparations completed, we now proceed to establish one of the principal results of this paper.

\begin{lemma}\label{12.04.L2}
	Let $a,b$ be defined by Notation \ref{12.16.N1}. Suppose 
 $\vp^0\in C_0^\iy(\Om), \vp^1\in H_0^1(\Om)$, and $f=0$. Then, for $T>\f{2b}{a}$,  there exist two  positive constants $C_1$ and $C_2$,  
  which depend only on $\al, M, T$ and $\Om$, such that
	\begin{equation}\label{12.04.1}
		\begin{split}
			C_1E(0)\leq \int_0^T\int_{\Ga_0}\left(\f{\pt\vp}{\pt\nu}\right)^2\df S\df t\leq C_2E(0),
		\end{split}
	\end{equation}
	where $\vp$  represents the weak solution of \eqref{12.14.1} corresponding to $(\vp^0,\vp^1, f=0)$, and
	\begin{equation*}
		E(0)=\f{1}{2}\int_\Om \left((\vp^1)^2+w\nabla\vp^0\cdot \nabla\vp^0\right)\df x.
	\end{equation*}
\end{lemma}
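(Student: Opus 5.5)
The plan is to pass to the limit $\e\to0$ in the uniform observability estimate \eqref{12.04.2} and in a uniform hidden-regularity bound for the approximating problems \eqref{11.14.2}, using Theorem \ref{11.20.T2}. Fix $\vp^0\in C_0^\iy(\Om)$, $\vp^1\in H_0^1(\Om)$, $f=0$, and let $\vp_\e$ solve \eqref{11.14.2} with the same data $(\vp^0,\vp^1,0)$ for every $\e\in(0,R_0)$. These data satisfy the extra assumptions in Theorem \ref{11.20.T2}, since $0\in H^1(0,T;L^2(\Om))$. Hence, along a subsequence, $\f{\pt\vp_\e}{\pt\nu}\to\f{\pt\vp}{\pt\nu}$ strongly in $L^2(\pt Q)$, and in particular in $L^2(\Ga_0\ts(0,T))$.

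\emph{Lower bound.} For $T>\f{2b}{a}$, the corollary to Theorem \ref{11.27.T1} gives
\[
\int_0^T\int_{\Ga_0}\Big(\f{\pt\vp_\e}{\pt\nu}\Big)^2\df S\df t\ge C E_\e(0),
\]
with $C$ independent of $\e$, because $a,b,\theta$ are $\e$-independent by Notation \ref{12.16.N1} and \eqref{11.27.6}. By \eqref{11.27.2} and \eqref{11.24.4}, $E_\e(0)=\f12\int_\Om\big((\vp^1)^2+w_\e|\nabla\vp^0|^2\big)\df x$. Since $w_\e\ge w$, we get $E_\e(0)\ge E(0)$; moreover $E_\e(0)\to E(0)$ by dominated convergence, since $w_\e=w$ off $B_\e$ and $w_\e\le M^\al$. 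Letting $\e\to0$ along the subsequence and using the strong boundary convergence yields $C_1E(0)\le\int_0^T\int_{\Ga_0}(\f{\pt\vp}{\pt\nu})^2\df S\df t$ with $C_1=C$.

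\emph{Upper bound.} The trace estimate \eqref{11.18.1} only gives $\|\f{\pt\vp}{\pt\nu}\|_{L^2(\pt Q)}\le C(\|\vp^0\|_{H_0^1(\Om;w)}+\|\vp^1\|_{L^2(\Om)})$, so squaring yields $\le 2C^2\cdot 2E(0)$, i.e.\ $\le C_2E(0)$. Alternatively, one can redo the hidden-regularity multiplier argument of Step 7 directly for $\vp$, or obtain the same bound for $\vp_\e$ from \eqref{11.14.8} with $\e$-uniform constants and then pass to the limit. The direct route via \eqref{11.18.1} suffices, noting that the norm of $H_0^1(\Om;w)$ is \eqref{08.19.2}, so $\|\vp^0\|_{H_0^1(\Om;w)}^2+\|\vp^1\|^2_{L^2}=2E(0)$.

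The main obstacle is the convergence of the normal derivatives, which is where the strong convergence \eqref{11.21.7} is essential. I must check that the subsequence extraction is harmless: since the limit $\vp$ is unique, a subsequence suffices. I must also check that the constants in the observability inequality genuinely do not depend on $\e$, in particular the bound $b\le M^{(2-\al)/2}$ in \eqref{11.26.6} and $|x|^\al\le M^{\al}$ on $\pt\Om$, which control the factor relating $\f{\pt}{\pt\nu_\e}$ to $\f{\pt}{\pt\nu}$.
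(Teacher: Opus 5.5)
Your proposal is correct and follows essentially the same route as the paper: the upper bound comes directly from \eqref{11.18.1}, and the lower bound comes from passing to the limit along a subsequence in the $\e$-uniform observability inequality \eqref{12.04.2}, using the strong convergence \eqref{11.21.7} of the normal derivatives. Your remark that $w_\e\ge w$ already gives $E_\e(0)\ge E(0)$ slightly streamlines the paper's step, which instead shows $E_\e(0)\to E(0)$ by estimating $\int_{B_\e}(w_\e-w)|\nabla\vp^0|^2\df x$.
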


\begin{proof}
	 The second inequality in \eqref{12.04.1} is a direct consequence of \eqref{11.18.1} in Theorem \ref{11.14.T1}.

On the one hand, invoking Lemmas \ref{12.01.L1} and \ref{02.05.L1}, we deduce that $\vp^0\in D(\mcA)\cap D(\mcA_\e)\ (0<\e<R_0)$, and
 and consequently, \eqref{11.21.7} holds. On the other hand, as as $\e\ra 0$, we have   $E_\e(0)\ra E(0)$ as $\e\ra 0$,
 which can be shown by
	\begin{equation*}
		\begin{split}
			\int_\Om w_\e|\nabla\vp^0|^2\df x-\int_\Om w|\nabla \vp^0|^2\df x=\int_{B_\e}(w_\e-w)|\nabla\vp^0|^2\df x\leq 2\e^\al \int_{B_\e}|\nabla \vp^0|^2\df x\ra 0. 
		\end{split}
	\end{equation*}
	 Therefore, by taking the limit as  $\e\ra 0$ (by extracting a subsequence), we obtain
	\begin{equation*}
		\int_0^T\int_{\Ga_0}\left(\f{\pt\vp}{\pt\nu}\right)^2\df S\geq C_1E(0).
	\end{equation*}
	 This completes the proof of this lemma.
\end{proof}

 The following theorem constitutes a principal result of this paper.

\begin{theorem}\label{12.04.T3}
	Let $a,b$ be defined by Notation \ref{12.16.N1}.  Suppose  $\vp^0\in H_0^1(\Om;w), \vp^1\in L^2(\Om)$ and $f=0$. Then, for $T>\f{2b}{a}$, there exist two positive constants $C_1$ and $C_2$,  and both depend only on
 $\al, M, T$ and $\Om$,  such that
	\begin{equation}\label{12.04.3}
		\begin{split}
			C_1E(0)\leq \int_0^T\int_{\Ga_0}\left(\f{\pt\vp}{\pt\nu}\right)^2\df S\df t\leq C_2E(0),
		\end{split}
	\end{equation}
	where $\vp$  represents the weak solution of \eqref{12.14.1} corresponding to $(\vp^0,\vp^1,f=0)$.
\end{theorem}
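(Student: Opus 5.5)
The plan is a density argument that transfers Lemma \ref{12.04.L2} from smooth data to general data $(\vp^0,\vp^1)\in H_0^1(\Om;w)\times L^2(\Om)$. The upper bound in \eqref{12.04.3} needs no approximation: the hidden-regularity estimate \eqref{11.18.1} of Theorem \ref{11.14.T1}, applied with $f=0$, gives $\|\f{\pt\vp}{\pt\nu}\|_{L^2(\pt Q)}^2\leq C(\|\vp^0\|_{H_0^1(\Om;w)}^2+\|\vp^1\|_{L^2(\Om)}^2)=2CE(0)$. Restricting the boundary integral to $\Ga_0$ then yields the second inequality with $C_2=2C$, and this constant depends only on $\al,T,M$ and $\Om$.

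For the lower bound I would first choose approximating data. Take $\vp^0_k\in C_0^\iy(\Om)$ with $\vp^0_k\ra\vp^0$ in $H_0^1(\Om;w)$; this is possible by the definition of $H_0^1(\Om;w)$ as the closure of $\mathcal D(\Om)$. Take $\vp^1_k\in C_0^\iy(\Om)\s H_0^1(\Om)$ with $\vp^1_k\ra\vp^1$ in $L^2(\Om)$. Let $\vp_k$ be the weak solution of \eqref{12.14.1} with data $(\vp^0_k,\vp^1_k,0)$, given by Theorem \ref{11.14.T1}. By Lemma \ref{12.04.L2}, for $T>\f{2b}{a}$ we have
\begin{equation*}
C_1E_k(0)\leq \int_0^T\int_{\Ga_0}\left(\f{\pt\vp_k}{\pt\nu}\right)^2\df S\df t,
\end{equation*}
where $E_k(0)$ is the energy of the data $(\vp^0_k,\vp^1_k)$. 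The constant $C_1$ depends only on $\al,M,T,\Om$, so it is uniform in $k$. Since $E_k(0)\ra E(0)$ by the choice of data, it remains to pass to the limit on the right-hand side.

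The passage to the limit uses linearity and uniqueness. By uniqueness in Theorem \ref{11.14.T1}, $\vp_k-\vp$ is the weak solution with data $(\vp^0_k-\vp^0,\vp^1_k-\vp^1,0)$. Estimate \eqref{11.18.1} then gives
\begin{equation*}
\left\|\f{\pt\vp_k}{\pt\nu}-\f{\pt\vp}{\pt\nu}\right\|_{L^2(\pt Q)}\leq C\left(\|\vp^0_k-\vp^0\|_{H_0^1(\Om;w)}+\|\vp^1_k-\vp^1\|_{L^2(\Om)}\right)\ra0.
\end{equation*}
So the normal derivatives converge strongly in $L^2(\Sigma_0)$, and their squared norms converge. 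Letting $k\ra\iy$ in the inequality above gives the lower bound in \eqref{12.04.3} with the same constant $C_1$.

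The step that needs the most care is the meaning of $\f{\pt\vp}{\pt\nu}$ for finite-energy data. In Theorem \ref{11.14.T1} it is defined for rough data only through density, as the continuous extension of the hidden-regularity estimate from Step 7. I would state explicitly that this extension is the boundary trace being used, so that the limit of $\f{\pt\vp_k}{\pt\nu}$ is precisely the object appearing in \eqref{12.04.3}. Beyond this, the argument only needs $E_k(0)\ra E(0)$, which is immediate from the choice of $\vp^0_k$ and $\vp^1_k$.
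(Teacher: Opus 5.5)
Your proposal is correct and follows essentially the same route as the paper. The upper bound comes directly from the hidden-regularity estimate \eqref{11.18.1}, and the lower bound comes from approximating $(\vp^0,\vp^1)$ by $C_0^\infty(\Om)$ data, applying Lemma \ref{12.04.L2} to the approximating solutions, and passing to the limit via \eqref{11.18.1} applied to the difference of solutions; your remark that $\f{\pt\vp}{\pt\nu}$ for finite-energy data is the density extension from Theorem \ref{11.14.T1} makes explicit a point the paper leaves implicit.
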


\begin{proof}
	 The second inequality in \eqref{12.04.1} is a direct consequence of \eqref{11.18.1} in Theorem \ref{11.14.T1}.
	
Let $\{\vp_\e^0\}_{0<\e<R_0}\subset C_0^\iy(\Om)$ such that $\vp_\e^0\to \vp^0$ in $H_0^1(\Om;w)$, and $\{\vp_\e^1\}_{0<\e<R_0}\subset C_0^\iy(\Om)$ such that $\vp_\e^1\to \vp^1$ in $L^2(\Om)$. Consider the following equation:
\begin{equation*}
	\begin{cases}
		\partial_{tt}\eta_\e-\mcA \eta_\e=0, &\text{in }Q, \\
		\eta_\e=0, &\text{on }\partial Q, \\
		\eta_\e(0)=\vp_\e^0, \partial_t\eta_\e(0)=\vp_\e^1, &\text{in }\Om,
	\end{cases}
\end{equation*}
Then, from \eqref{11.18.1} in Theorem \ref{11.14.T1}, we have
\begin{equation*}
	\left\|\frac{\partial (\eta_\e-\vp)}{\partial \nu}\right\|_{L^2(\partial Q)}\leq C\left(\|\vp^0-\vp_\e^0\|_{H_0^1(\Om;w)}+\|\vp^1-\vp_\e^1\|_{L^2(\Om)}\right),
\end{equation*}
that is, $\frac{\partial\eta_\e}{\partial \nu}\to \frac{\partial\vp}{\partial \nu}$ strongly in $L^2(\partial Q)$ as $\vp_\e^0\to \vp^0$ in $H_0^1(\Om;w)$ and $\vp_\e^1\to \vp^1$ in $L^2(\Om)$,
where the constant $C>0$ depends only on $\al$, $M$, $T$, and $\Om$. Note that by Lemma \ref{12.04.L2}, we have
\begin{equation*}
	\int_0^T\int_{\Ga_0}\left(\frac{\partial \eta_\e}{\partial\nu}\right)^2\df S\df t\geq C_1\int_\Om \left((\vp_\e^1)^2+w|\nabla \vp_\e^0|^2\right)\df x
\end{equation*}
 where the constant $C_1>0$ depends only on $\al$, $M$, $T$, and $\Om$. Hence
\begin{equation*}
	\int_0^T\int_{\Ga_0}\left(\frac{\partial\vp}{\partial\nu}\right)^2\df S\df t\geq C_1\int_\Om \left((\vp^1)^2+w|\nabla\vp^0|^2\right)\df x=C_1E(0).
\end{equation*}
This concludes the proof of this theorem.
\end{proof}

\subsection{Exactly null controllability}

Let $\Ga_0$  be defined as in \eqref{11.28.9}. We say that equation \eqref{12.16.2} is exactly null controllable at time
$T>0$  if there exists a control function $u\in L^2(\Sigma_0)$  such that the solution
$\vp$  of the following equation
\begin{equation}\label{12.16.2}
	\begin{cases}
		\pt_{tt}\vp+\mcA\vp=0, &\mbox{in }Q, \\
		\vp=u, &\mbox{on }\Sigma_0, \\
		\vp=0, &\mbox{on }\Sigma-\Sigma_0, \\
		\vp(0)=\vp^0,  \pt_t\vp(0)=\vp^1, &\mbox{in }\Om
	\end{cases}
\end{equation}
satisfies $\vp(\cdot, T)=\pt_t\vp(\cdot,T)=0$, where $\vp^0\in L^2(\Om)$ and $\vp^1\in H^{-1}(\Om;w)$.
Here $\Sigma_0=\Ga_0\ts (0,T)$ and $\Sigma=\pt\Om\ts (0,T)$, and  $\vp^0\in H_0^1(\Om;w)$ and $\vp^1\in L^2(\Om)$, which are mentioned in 
\dref{12.16.2*}.

Here, $\Sigma_0=\Ga_0\ts (0,T)$ and $\Sigma=\pt\Om\ts (0,T)$.  Additionally,
 $\vp^0\in H_0^1(\Om;w)$ and $\vp^1\in L^2(\Om)$, as specified in \dref{12.16.2*}.

 The well-posedness of equation \eqref{12.16.2} has been thoroughly investigated in \cite[Definition 2.15 and Lemma 2.16]{Yang2}. We now proceed to prove Theorem \ref{12.04.T4}.

\vspace{0.2cm} 

\noindent {\bf {Proof of Theorem \ref{12.04.T4}}.}
	 This result follows as a standard consequence of J.-L. Lions' Hilbert Uniqueness Method, as outlined in \cite[Theorem 1.1]{Lions}. According to this method, the controllability of equation \eqref{12.16.2} is equivalent to the validity of the following so-called observability inequality:
	\begin{equation*}
		E(0)\leq C\int_0^T\int_{\Ga_0}\left(\f{\pt\vp}{\pt \nu}\right)^2\df S\df t,
	\end{equation*}
	where $C>0$  represents a positive constant. For further reference, similar results can be found in \cite[Theorem 1.3]{Yao}, \cite[Theorem 2.2, Chapter 2.3, p. 15]{Zuazua}, and \cite[Theorem 2.42, Chapter 2.3.2, p. 56]{Coron}.
\hfill $\Box$ 

\section{Concluding Remarks}\label{Se5}

{{In this paper, we first approximate the degenerate hyperbolic equation by uniformly hyperbolic equations and establish the controllability (i.e., the observability inequality) for the degenerate hyperbolic equation with a degenerate interior point. To the best of our knowledge, these results do not appear in the existing literature. These findings imply that the degenerate hyperbolic equation can be controlled at the boundary.

As is well known, a natural approach to address the controllability of degenerate hyperbolic equations is to approximate them by uniformly hyperbolic equations. If the degenerate points are interior points, we approximate the coefficients in the partial differential operator $\mathcal{A}$ using appropriate functions. If the degenerate points lie on the boundary, the shape design method must be employed. However, we note that the behavior of wave propagation near the degenerate points remains an {\it open problem}. This is a purely mathematical issue.

We emphasize that the cut-off method plays a fundamental role in the analysis of degenerate partial differential equations. The so-called cut-off method involves isolating problematic regions (the “bad” points or regions) and focusing on the well-behaved ones (the “good” points or regions), allowing us to derive useful properties on the latter. In the proof of Theorem \ref{11.14.T1}, this method is essential for obtaining the improved regularity estimates \eqref{02.04.8} and \eqref{02.13.3}, as well as for handling the normal vector $\bs{n}$ in the hidden regularity estimate. Furthermore, even in the construction of solutions to \eqref{12.16.2}, the Dirichlet map relies on the cut-off method (see \cite[Lemmas 2.10 and 2.11]{Yang2}). In degenerate partial differential equations, the “bad” regions typically correspond to points where regularity cannot be improved. At such points, certain quantities may fail to be well defined or controllable—for instance, the normal derivative $\frac{\partial \varphi}{\partial \nu}$ may not exist on the degenerate part of the boundary. Consequently, one cannot directly obtain enhanced regularity or boundary estimates there. This difficulty motivates us to isolate these problematic regions and instead perform the analysis on the “good” regions, where stronger regularity and controllability properties can be established.}}

\end{document}